\title{Optimization methods for the capacitated refueling\\ station location problem with routing}
\author{Nicholas Nordlund$^*$, Leandros Tassiulas$^*$ and Jan-Hendrik Lange$^\dagger$\\
$^*$Yale University\\
$^\dagger$Amazon}
\providecommand{\R}{\mathbb{R}}
\providecommand{\A}{\mathcal{A}}
\providecommand{\G}{\mathcal{G}}
\providecommand{\V}{\mathcal{V}}
\renewcommand{\P}{\mathcal{P}}
\providecommand{\T}{\mathcal{T}}
\providecommand{\J}{\mathcal{J}}
\providecommand{\terminals}{\mathcal T}
\providecommand{\stations}{\mathcal R}
\providecommand{\odpairs}{\mathcal Q}
\providecommand{\rangemax}{R^{\mathrm{max}}}
\providecommand{\rangeorig}{R^{\mathrm{orig}}}
\providecommand{\rangedest}{R^{\mathrm{dest}}}
\providecommand{\capa}{\kappa}
\providecommand{\od}{\textsc{o-d}\xspace}
\providecommand{\afv}{\textsc{afv}\xspace}
\providecommand{\afvs}{\textsc{afv}s\xspace}
\providecommand{\evs}{\textsc{ev}s\xspace}
\providecommand{\frlm}{\textsc{frlm}\xspace}
\providecommand{\dfrlm}{\textsc{dfrlm}\xspace}
\providecommand{\rslpr}{\textsc{rslp-r}\xspace}
\providecommand{\rslprc}{\textsc{crslp-r}\xspace}
\theoremstyle{plain}
\newtheorem{prop}{Proposition}
\theoremstyle{definition}
\newtheorem{defn}{Definition}
\newtheorem{assump}{Assumption}
\theoremstyle{remark}
\newtheorem*{rem}{Remark}
\begin{document}

\maketitle

\begin{abstract}
The energy transition in transportation benefits from demand-based models to determine the optimal placement of refueling stations for alternative fuel vehicles such as battery electric trucks.
    A formulation known as the refueling station location problem with routing (\rslpr) is concerned with minimizing the number of stations necessary to cover a set of origin-destination trips such that the transit time does not exceed a given threshold.
    In this paper we extend the \rslpr by station capacities to limit the number of vehicles that can be refueled at individual stations.
    The solution to the capacitated \rslpr (\rslprc) avoids congestion of refueling stations by satisfying capacity constraints.
    We devise two optimization methods to deal with the increased difficulty to solve the \rslprc.
    The first method extends a prior branch-and-cut approach and the second method is a branch-cut-and-price algorithm based on variables associated with feasible routes.
    We evaluate both our methods on instances from the literature as well as a newly constructed network and find that the relative performance of the algorithms depends on the strictness of the capacity constraints.
    Furthermore, we show some runtime improvements over prior work on uncapacitated instances.
\end{abstract}

% !TeX root = ../main-arxiv.tex

\section{Introduction}

In 2019, transportation accounted for 29\% of greenhouse gas emissions in the USA. Since 1990, emissions from the transportation sector have increased more in absolute terms than any other sector, with emissions from medium and heavy-duty trucks up 92\% \citep{EPA2022}.
In response, governments and corporations have begun to transition away from internal combustion vehicles towards ``greener'' alternatives. These \textit{alternative fuel vehicles} (\afvs) include electric vehicles (\evs), compressed natural gas vehicles, hydrogen fuel cell vehicles, and others.

Despite the environmental benefits, several challenges accompany the transition to \afvs. This includes longer refueling times (especially for \evs), limited vehicle ranges, and the sparse distribution of refueling infrastructure. 
As a result, \afvs in logistics take longer to traverse the same routes as diesel trucks, leading to shipping delays for companies like Amazon, Fedex, and UPS.
Here, several factors contribute to \afvs' increased transit times. 
Firstly, the longer refueling times directly add to transit times unless they are scheduled when the vehicle is idle for separate reasons.
Secondly, fewer refueling stations mean \afvs have to travel further out of their way to refuel and may even need to queue for limited refueling capacity at the station.

Currently, the lack of refueling infrastructure is a limiting factor on the commercial viability of heavy-duty \afvs for logistics companies.
The placement of new infrastructure requires careful planning, however.
Building refueling stations is expensive; there are limited numbers of locations suitable for new refueling stations; and logistics companies can only tolerate a certain amount of deviation compared to the transit times of diesel trucks.
    Therefore, the optimal placement of refueling stations for \afvs becomes an important aspect of the energy transition in transportation.

Prior works have presented formulations for the problem of placing new refueling stations, see \citep{kchaou2021charging} for a recent survey.
One particular formulation is the Refueling Station Location Problem with Routing (\rslpr) introduced by \cite{Yildiz2016}. 
The \rslpr\ considers the placement of refueling stations in order to enable connectivity between a number of origin-destination (\od) pairs.
Range-limited vehicles transport demand between \od\ pairs and use refueling stations to complete their journeys.
The problem has a routing aspect as the connecting paths are allowed to deviate to a given degree from the shortest possible path.
This yields a model that is flexible in regards to vehicle routes, but can still be solved for practical instances \citep{Arslan2019, Goepfert2019}.
However, the formulation does not reflect the capacity required at refueling stations as it allows the assignment of an unlimited amount of \od flows to any particular location.
In reality, the capacity of a station may be constrained by a variety of factors such as available yard space, the number of installed refueling ports or (in the case of \evs) the total available power supply provided by the grid.

To address this shortcoming, we introduce a capacitated version of the \rslpr, abbreviated by \rslprc, which extends the problem by capacity constraints at refueling stations.
In this version of the problem, demand flows between \od\ pairs are routed through a set of refueling stations and cannot exceed pre-defined capacities at individual stations.
Thus, the extended model allows to avert congestion and delays at upcoming refueling stations when planning their locations.
This adds an important feature to the \rslpr while maintaining its routing flexibility.

To solve the \rslprc, we devise two accompanying optimization methods.
The first is an extension of the branch-and-cut approach by \cite{Arslan2019}.
The second is a branch-cut-and-price algorithm based on variables associated with feasible \od routes.
We evaluate our methods on instances from the literature as well as a newly constructed network and compare which algorithm performs better depending on the instance parameters.
Furthermore, we modify the integer separation routine by \cite{Arslan2019} and demonstrate associated runtime improvements on uncapacitated instances.

The paper is structured as follows.
First we outline the context of related work in Section~\ref{sec:related-work}.
In Section~\ref{sec:model} we give a mathematical formulation of the \rslprc.
Then we detail the optimization algorithms to solve the \rslprc in Section~\ref{sec:solution}.
In Section~\ref{sec:experiments} we evaluate the presented methods in numerical experiments and discuss our results.

% !TeX root = ../main-arxiv.tex

\section{Related Work}
\label{sec:related-work}

The optimal placement of refueling units in a road network so as to support \afvs has gained considerable research interest in recent years.
In the survey paper by \citet{kchaou2021charging}, the author thoroughly reviews the rapidly expanding body of work on refueling station location models.
As any attempt to match their level of detail is futile, we focus here only on the works most closely related to ours.

\citet{Kuby2005} introduce the Flow Refueling Location Model (\frlm), which places a number of refueling stations in a network optimally so as to maximize coverage of specified \od pairs w.r.t.\ range limitations of \afvs.
An \od pair is covered if sufficient refueling stations are available such that an \afv can transit from origin to destination along a unique \od path without running out of fuel.
They solve the model as a mixed-integer program by pre-generating all combinations of stations that cover individual \od pairs.
While they consider only a single path per \od pair, the number of combinations to generate can still increase exponentially with the number of nodes along the path \citep{Capar2012}.

In order to alleviate this issue, \cite{Capar2013} propose a model that formulates the \frlm in terms of covering all arcs along the \od path.
Their formulation significantly reduces the necessary preprocessing and solution time.

\cite{MirHassani2013} reformulate the \frlm based on an \emph{expanded network graph}, whose arcs represent range-feasible segments of the \od paths in the underlying network.
More generally, an expanded network graph can be used as an explicit representation of all relevant connections in the base network.
Many subsequent works apply this concept, albeit sometimes only implicitly.

\cite{Upchurch2009} develop a capacitated version of the \frlm in which the number of vehicles that can be refueled at a particular station is limited by the station capacity.

\cite{Kim2012} augment the \frlm by a set of \emph{deviation paths} for each \od pair, resulting in the Deviation Flow Refueling Location Model (\dfrlm).
Deviation paths may be longer than the shortest path up to some tolerance and a demand is (at least partially) covered if any deviation path is covered.
Since the number of deviation paths grows exponentially with increasing deviation from the shortest path length, pre-generating all deviation paths quickly becomes intractable.
A heuristic method to solve the \dfrlm is proposed by \cite{Kim2013}. Their algorithm greedily adds stations to the network, which is represented by an expanded network graph.

\cite{Hosseini2017b} extend the model of \cite{MirHassani2013} by fuel consumption that is proportional to distance and corresponding station refueling capacities.
They develop a heuristic based on Lagrangean relaxation to solve the model.
A capacitated version of the \dfrlm based on the compact formulation of \cite{Capar2013} is devised by \cite{Hosseini2017}, who also propose a heuristic algorithm to solve it.

\cite{Yildiz2016} combine the ideas of deviation paths and path segments to obtain a formulation without the requirement to pre-compute paths, which they call Refueling Station Location Problem with Routing (\rslpr).
Demands are covered by routing vehicles along any path whose length deviates from the shortest path length no more than a specified bound.
To solve the \rslpr they propose a branch-and-price method that prices out variables corresponding to arcs in an expanded network graph.

More efficient branch-and-cut approaches to the \rslpr are developed by \cite{Arslan2019,Goepfert2019}.
The authors formulate the \rslpr without any arc variables based on inequalities associated with \od pair separating station sets.
Violated inequalities, which can be identified by combinatorial algorithms applied to an expanded network graph, are added iteratively to an LP relaxation.
The resulting method greatly improves the running time of prior approaches.

\cite{Kinay2021} consider a variant of the problem that removes the path deviation bound and instead minimizes a weighted sum of station cost and deviation cost to cover all \od pairs.
This allows them to devise an algorithm based on Benders' decomposition, where the subproblem is solvable in polynomial time.

%\begin{itemize}
%\item \cite{Wang2009} battery SOC tracking
%\item \cite{Li2014} big-M model..
%\end{itemize}

In this paper we consider a capacitated version of the \rslpr that limits the number of \od pairs supported by individual stations.
Due to the introduction of station capacities, the assignments of \od paths to every \od pair are no longer independent.
This means that the subproblem can be expressed by a 0--1 multi-commodity network flow with an additional transit time constraint.
In order to solve the model, we devise, implement and test both a branch-and-cut and a branch-cut-and-price algorithm.

%\begin{itemize}
%\item \cite{Karsten2015}
%\item \todo expand on time constrained multi-commodity flow, network design
%\end{itemize}

\cite{Hellsten2021} study the transit time constrained fixed-charge multi-commodity network design problem.
Mathematically, this problem differs from the capacitated \rslpr only in the objective function and the missing integer constraint on the path flows.
In other words, contrary to \cite{Hellsten2021} we do not allow the flow of an individual \od pair to split over several paths, which increases the difficulty of solving the problem.

% !TeX root = ../main-arxiv.tex

\section{Capacitated Refueling Station Problem with Routing}
\label{sec:model}

In this section we formulate an extension of the \rslpr with station capacities.
We refer to the resulting problem as Capacitated Refueling Station Location Problem with Routing (\rslprc).
In Section~\ref{sec:math-model} we state the problem mathematically and in Section~\ref{sec:model-extension} we discuss variations of the model that we omit from the main problem statement for the sake of simplicity.
We present two basic integer programming formulations of the problem in Section~\ref{sec:cut-formulation} and Section~\ref{sec:path-formulation}.
The former is based on \od pair separating node cuts and the latter is based on variables associated with \od paths.
We augment the basic formulations with further valid inequalities in Section~\ref{sec:valid-inequalities}.

\subsection{Problem Statement}
\label{sec:math-model}

The goal of the \rslprc is to determine an optimal set of refueling stations such that a limited-range vehicle can perform a given set of origin-destination (\od) trips without running out of fuel.
Accordingly, let $\rangemax$ denote the \emph{maximum range} that the vehicle can travel before it needs to refuel at a refueling station.
In addition, we assume the vehicle starts an \od trip from the origin with an initially available range denoted by $\rangeorig$ and needs to arrive at the destination with remaining range denoted by $\rangedest$.
In order to be able to concatenate \od trips with one another, we require that $\rangeorig \leq \rangedest$, i.e.\ the remaining range at the end of each trip must cover the starting range of a potential subsequent trip.
A commonly used assumption is $\rangeorig = \rangedest = \rangemax/2$, the so-called ``half-capacity requirement''.

\begin{table}[!ht]
\center
\caption
{Summary of the main notation used in the paper\label{tab:notation}.}
{
\begin{tabular}{rl}
\toprule
\textbf{Symbol} & \textbf{Description} \\
\midrule
$\rangemax \in \R$ & Maximum vehicle range \\
$\rangeorig \in \R$ & Available range at origins \\
$\rangedest \in \R$ & Required range at destinations \\
\midrule
$\G = (\V,\A)$ & Directed refueling network graph \\
$\T \subset \V$ & Terminal sites (origin/destination nodes) \\
$\stations \subset \V$ & Refueling stations (transit nodes) \\
%$a = (u,v) \in \A$ & Range-feasible, fastest connection between $u,v \in \V$ \\
$\ell_a \geq 0 $ & Distance of arc $a \in \A$ \\
$\tau_a \geq 0$ & Transit time of arc $a \in \A$ \\
%\midrule
%$\delta^- \colon \V \to 2^\A$ & Incoming arcs \\
%$\delta^+ \colon \V \to 2^\A$ & Outgoing arcs\\
\midrule
$q \in \odpairs$ & \od pair indexes \\
$s_q, t_q$ & Origin, destination node for \od\ pair $q$ \\
$f_q > 0$ & Flow demand for \od\ pair $q$ \\
$u_q > 0$ & Transit time upper bound for \od\ pair $q$ \\
\midrule
$\P_q$ & Set of time-feasible paths for \od\ pair $q$ \\
$\P = \bigcup_{q \in \odpairs} \P_q$ & Set of all time-feasible paths \\
%$p \in \P_q$ & A transit-time-feasible path between $s_q$ and $t_q$\\
$\delta_v^P \in \{0,1\}$ & Indicator for path $P$ using refueling station $v$\\
$\G_q = (\V_q, \A_q)$ & Time-feasible subgraph for \od\ pair $q$ \\
$\Gamma_q$ & Time-separators for \od pair $q$ \\
\midrule
$c_v \in \mathbb{N}^0$ & Cost to build refueling unit at $v \in \stations$ \\
$\capa_v \in \mathbb{N}^0$ & Refueling capacity at $v \in \stations$ \\
\midrule
$x_v \in \{0,1\}$ & Build decision at station $v \in \stations$\\
$y_P \in \{0,1\}$ & Coverage variable for path $P \in \P$\\
$z_v^q \in \{0, 1\}$ & Usage of station $v$ by \od\ pair $q$ \\
\midrule
$\sigma_q \geq 0$ & Dual variables for coverage \eqref{eq:path-master-coverage} \\
$\mu_v \geq 0$ & Dual variables for capacity \eqref{eq:path-master-capacity} \\
$\pi_v^q \geq 0$ & Dual variables for strong linking \eqref{eq:path-master-strong-link} \\
$\lambda^j_v \geq 0$ & Dual variables for LCIs \eqref{eq:path-master-lci} \\
$\beta_v \geq 0$ & Dual variables for station bounds \eqref{eq:path-station-bound} \\
\bottomrule
\end{tabular}
}
\end{table}

Similar to \cite{Arslan2019} we adopt an \emph{expanded network} view of the problem, which means we abstract the relevant network of refueling stations from the underlying road network.
More precisely, we consider a directed network graph $\G = (\V, \A)$ that consists of two types of nodes, the \emph{terminal sites} $\terminals \subset \V$ and the \emph{refueling stations} $\stations \subset \V$.
Clearly, for any pair of nodes $u, v \in \V$ there is a fastest connection between $u$ and $v$ in the underlying road network.
We denote by $\tau_{uv} \geq 0$ the \emph{transit time} of that connection and by $\ell_{uv} \geq 0$ its \emph{distance}.

We assume that the arcs of the network graph $\G = (\V, \A)$ represent precisely all fastest connections that satisfy the range constraints of the vehicle, as outlined in Definition~\ref{def:range-feasible} and Assumption~\ref{assump:expanded-network}.

\begin{defn}
\label{def:range-feasible}
A connection in the underlying road network between two nodes $u, v \in \V$  is considered \emph{range-feasible} if its distance $\ell_{uv}$ satisfies the following range constraint:
\begin{align*}
\ell_{uv} \leq
\begin{cases}
\rangemax & \text{if } u \in \stations \text{ and } v \in \stations \\
\rangemax - \rangedest & \text{if } u \in \stations \text{ and } v \in \terminals \\
\rangeorig & \text{if } u \in \terminals \text{ and } v \in \stations \\
\rangeorig - \rangedest & \text{if } u \in \terminals \text{ and } v \in \terminals.
\end{cases}
%\label{eq:range-constraint}
\end{align*}
Note that, if we assume $\rangeorig \leq \rangedest $, then no direct connections between terminal sites are considered range-feasible, which means an intermediate refueling stop is mandatory for any \od trip between terminal sites.
\end{defn}

\begin{assump}[Expanded network graph]
\label{assump:expanded-network}
In the directed network graph $\G = (\V, \A)$ with $\V = \stations \cup \terminals$, the set of arcs $\A$ represent all range-feasible fastest connections in the underlying road network.
\end{assump}

Let $\odpairs$ be an index set of \od pairs.
With every $q \in \odpairs$ we associate an \emph{origin} node $s_q \in \terminals$ and a \emph{destination} node $t_q \in \terminals$, a \emph{flow demand} $f_q > 0$ and an \emph{upper bound} $u_q > 0$ on the allowed transit time between $s_q$ and $t_q$.

In the \rslpr we consider for each \od pair $q$ the set of time-feasible (and range-feasible) routes from $s_q$ to $t_q$.
Any such route is represented by a path in the network graph whose associated transit time satisfies an upper bound.

\begin{defn}[Time-feasible path]
For $q \in \odpairs$ a path $P$ in $\G$ from $s_q$ to $t_q$ is called \emph{time-feasible} if
\begin{align}
\sum_{a \in P} \tau_a \leq u_q, \label{eq:time-feasible-path}
\end{align}
i.e.\ its associated transit time is at most the upper bound for $q$.
Let $\P_q$ denote the set of all time-feasible paths for $q \in \odpairs$ and put $\P = \bigcup_{q \in \odpairs} \P_q$.
\end{defn}

It is convenient to define for each $q \in \odpairs$ an \od pair subgraph that contains all nodes and arcs which appear in at least one time-feasible path for \od pair $q$.
Performing computations on the \od pair subgraphs instead of $\G$ has several benefits for optimization and is the basis for the algorithms presented in Section~\ref{sec:solution}. 

\begin{defn}[\od\ pair subgraph] Let $\hat\tau_{ij}$ denote the shortest path distance from vertex $i$ to $j$ within $\G$ w.r.t.\ the transit time function $\tau$. In particular, we have $\hat\tau_{ij} = \tau_{ij}$ for $ij \in \A$.
For each $q \in \odpairs$, we define the subgraph $\G_q = (\V_q, \A_q)$ via
\begin{align*}
\V_q & = \{i \in \V \mid \hat\tau_{s_q i} + \hat\tau_{it_q} \leq u_q\}, \\
\A_q & = \{ ij \in \A \mid \hat\tau_{s_qi} + \tau_{ij} + \hat\tau_{jt_q} \leq u_q \}.
\end{align*}
\end{defn}

\paragraph{Capacitated Refueling Station Location Problem with Routing}
For any subset of refueling stations $S \subseteq \stations$, let $\G_q(S) \subseteq \G_q$ denote the subgraph obtained from $\G_q$ by removing all refueling stations not contained in $S$.
With each refueling station $v \in \stations$ we associate a build cost $c_v \geq 0$ and a station capacity $\capa_v \geq 0$.
The \emph{Capacitated Refueling Station Location Problem with Routing} (\rslprc) is to determine a minimal cost set of refueling stations $S \subseteq \stations$ such that for all \od pairs there are time-feasible paths in $\G(S)$ whose combined flow demands meet the station capacity constraints:
\begin{align*}
\min_{\substack{S \subseteq \stations, \\ P_q \in \P_q}} \quad & \sum_{v \in S} c_v \tag{\rslprc} \label{eq:problem} \\
\text{s.t.} \quad
& P_q \subseteq \G_q(S) \qquad \forall q \in \odpairs \\
& \sum_{\{q \in \odpairs \mid v \in P_q\}} f_q \leq \capa_v \qquad \forall v \in \stations.
\end{align*}

\subsection{Model Variations}
\label{sec:model-extension}

Below we discuss variations of the problem parameters that make the model applicable to a wider range of scenarios.

\paragraph{Transit Time Function}
The transit time $\tau$ may be chosen as any function that is positive definite and satisfies the triangle inequality, also called a \emph{quasimetric}.
This enables to model a variety of different constraints on the \od paths.
For instance, besides pure driving time, $\tau$ may include the necessary break time spent on charging.
Alternatively, the choice $\tau_{uv} = \ell_{uv}$ relates transit time directly to distance, which is the approach taken by prior works.

\paragraph{Vehicle Types}

Different vehicle types may vary w.r.t.\ their associated ranges, transit times or even transit distances (e.g.\ in the case of restricted roads).
In principle, the model can assign a different vehicle type to each \od pair by defining the relevant \od pair network graphs $\G_q$ separately.

\paragraph{Existing Refueling Stations}

Existing refueling stations, i.e.\ stations that are pre-defined, can be modeled by setting the corresponding cost to $c_v = 0$.

\paragraph{Objective Function}

We stated the \rslprc in terms of minimizing the cost of new refueling stations.
Another common variant is to maximize the enabled flow demand under a given cost budget $B$, stated as follows:
\begin{align}
\max_{\substack{Q \subseteq \odpairs, \\ S \subseteq \stations, \\ P_q \in \P_q}} \quad & \sum_{q \in Q} f_q \label{eq:problem-demand} \\
\text{s.t.} \quad
& P_q \subseteq \G_q(S) \qquad \forall q \in Q \nonumber \\
& \sum_{\{q \in Q \mid v \in P_q\}} f_q \leq \capa_v \qquad \forall v \in \stations \nonumber \\
& \sum_{v \in S} c_v \leq B. \nonumber
\end{align}
We note that the methods presented in this paper can be adapted so as to solve \eqref{eq:problem-demand} instead of \eqref{eq:problem} by adding coverage variables for \od pairs and the budget constraint to the otherwise analogous approach.
%For the sake of simplicity we restrict the presentation to methods for solving the latter.

\paragraph{Infeasible Demand Pairs}

For some \od pairs $q$ there may not exist any time-feasible path, i.e.\ $\P_q = \emptyset$.
Such \od pairs can be removed a-priori from the problem statement.

\subsection{Cut Formulation}
\label{sec:cut-formulation}

In this section we extend the formulation by \cite{Arslan2019, Goepfert2019} with station capacity constraints.

To this end, we similarly introduce the dual notion of time-feasible paths, the \emph{time-separators}, i.e.\ station sets that intersect all time-feasible paths for a given \od pair.
In any time-separator for \od pair $q \in \odpairs$ at least one refueling station must be active in order to establish a time-feasible path for~$q$ that traverses only active refueling stations.

\begin{defn}[Time-separator]
A set of refueling stations $S \subseteq \stations$ is a \emph{time-separator} for \od pair~$q \in \odpairs$ if it intersects all its time-feasible paths, i.e.\
\begin{align*}
    S \cap P \neq \emptyset, \quad \forall P \in \P_q.
\end{align*}
We denote by
\begin{align*}
\Gamma_q = \{ S \subseteq \stations \mid S \text{ time-separator for } q \}
\end{align*}
the set of all time-separators for \od\ pair $q$.
Naturally, any \emph{separator} for $q \in \odpairs$, i.e.\ any set of stations that intersects \emph{all} paths from $s_q$ to $t_q$ in $\G_q$ (instead of only all time-feasible paths) is also a time-separator.
A (time-)separator $S$ for $q$ is called \emph{minimal} if no proper subset of $S$ is also a (time-)separator for $q$.
\end{defn}

\begin{rem}
\cite{Arslan2019, Goepfert2019} refer to time-separators as \emph{$q$-node-cuts} and \emph{\textsc{od}-cuts} respectively.
The term time-separator allows a systematic distinction from the subset of separators.
\end{rem}

We define two types of binary variables for the integer program. First, we have a local station variable $z_v^q$ equal to 1 if \od\ pair $q$ routes \afvs through refueling station $v$ and 0 otherwise. Likewise, we define a global station variable $x_v$ equal to 1 if any \od\ pair uses refueling station $v$. We can think of $x$ variables as decision variables for refueling stations in the global network graph $\G$ and $z$ variables as decision variables for stations in the \od\ pair subgraphs $\G_q$.

This results in the following integer linear program for \rslprc, which we refer to as the \emph{cut formulation} \eqref{eq:cut}:

\begin{align}
\min \quad & \sum_{v \in \stations} c_v x_v \tag{CF} \label{eq:cut} \\
\text{s.t.} \quad
& \sum_{v \in S} z^q_v \geq 1 & \forall q \in \odpairs, \; S \in \Gamma_q \label{eq:cut-coverage} \\
& \sum_{q \in \odpairs} f_q z_v^q \leq \kappa_v x_v & \forall v \in \stations \label{eq:cut-capacity}\\
& x_v,z_v^q \in \{0, 1\} & \forall v \in \stations, \; q \in \odpairs \label{eq:cut-int}
\end{align}

Constraint \eqref{eq:cut-coverage} ensures there exists a time-feasible path for every \od pair $q$ by covering each time-separator with a refueling station.
Constraint \eqref{eq:cut-capacity} ensures the amount of demand that flows through a refueling station does not exceed its capacity.
Lastly, constraint \eqref{eq:cut-int} restricts local and global station variables to binary values. 

The cut formulation contains $\mathcal{O}(|\stations||\odpairs|)$ binary variables, which is a polynomial number, but can still become quite large.
In practice, the $z$ variables can be restricted to the subgraphs $\G_q$, which is manageable as long as the subgraphs are not too large.

Further, the cut formulation contains a large number of constraints, since $|\Gamma_q|$ grows exponentially.
We typically solve integer programs with exponentially many constraints using \emph{branch-and-cut} algorithms.
Rather than specifying every constraint (\ref{eq:cut-coverage}) at the start, we can generate and add them to the model dynamically using delayed row generation. 
% For any solution $(x^*, z^*)$, the separation problem is to identify a $q$-node-cut $S$ such that $\sum_{v \in S} z_v^{q*} < 1$ or determine no such $S$ exists.

\subsection{Path Formulation}
\label{sec:path-formulation}

Alternatively, we present a path-based formulation (\ref{eq:path}) for the \rslprc that is amenable to delayed column generation.
Our formulation is closely related to the work of \cite{Hellsten2021}.
Again, we require two types of binary variables we denote $x_v$ and $y_P$.
The $x_v$ variables indicate active refueling stations as in the cut formulation, whereas the $y_P$ variables are equal to 1 if \od\ pair $q$ uses path $P \in \P_q$ and 0 otherwise.
For convenience, we also define the vertex-path incidence indicators
\begin{align*}
\delta^P_v = \begin{cases} 1 & \text{if } v \in P \\ 0 & \text{else.} \end{cases}
\end{align*}
The \emph{path formulation} of the \rslprc reads
\begin{align}
\min \quad & \sum_{v \in \stations} c_v x_v \tag{PF} \label{eq:path} \\
\text{s.t.} \quad
& \sum_{P \in \P_q} y_P = 1 & \forall q \in \odpairs \label{eq:path-coverage} \\
& \sum_{q \in \odpairs} \sum_{P \in \P_q} f_q\delta_v^P y_P \leq \kappa_v x_v & \forall v \in \stations \label{eq:path-capacity} \\
& x_v, y_P \in \{0, 1\} & \forall v \in \stations, \; \forall P \in \P \label{eq:path-int}
\end{align}

Constraint (\ref{eq:path-coverage}) ensures that exactly one path from the set of all time-feasible paths $\P_q$ for \od\ pair $q$ is selected.
Constraint (\ref{eq:path-capacity}) ensures that the total volume of demand flowing through station~$v$ from all \od\ pairs is less than the capacity $\kappa_v$ of that station if $x_v = 1$ (and 0 otherwise).
Finally, constraint (\ref{eq:path-int}) ensures the path and station variables are binary.

While the cut formulation has a polynomial number of variables and an exponential number of constraints, the path formulation has an exponential number of variables and a polynomial number of constraints. We solve integer programs with exponentially many variables using \emph{branch-and-price} algorithms. While the branch-and-cut generates constraints dynamically, the branch-and-price algorithm we use to solve (\ref{eq:path}) generates variables dynamically using column generation.

\subsection{Valid Inequalities}
\label{sec:valid-inequalities}

The basic formulations introduced above have weak linear programming relaxations, as exhibited by closely related network design problems \citep{gendron1999multicommodity}.
Prior works have proposed a number of valid inequalities to strengthen these formulations.
Such inequalities are valid constraints for all integer solutions but invalid for some fractional solutions.
Since valid inequalities separate integer and fractional solutions, including them in the formulation reduces the domain of the feasible set and strengthens LP relaxations.
Two families of valid inequalities for \eqref{eq:cut} are also known as \emph{strong linking inequalities} and \emph{lifted cover inequalities}. 

We present the valid inequalities in terms of the cut formulation, but they can easily be translated to the path formulation using the following identity between local station variables and path variables:
\begin{align*}
z_v^q = \sum_{P \in \P_q} \delta_v^P y_P %\label{eq:cut2path}.
\end{align*}

\begin{defn}[Strong linking inequality]

These are common cutting planes for network design problems. Strong linking inequalities perform similar roles compared to capacity constraints (\ref{eq:cut-capacity}). % and (\ref{eq:path-capacity}).
If any \od\ pair $q$ uses refueling station $v$ with local station variable $z_v^q = 1$, then the global refueling station variable $x_v$ must also equal 1.
For the cut formulation (\ref{eq:cut}), we define the strong linking inequalities:
\begin{align*}
    z_v^q \leq x_v, \; \forall v \in \stations, \; q \in \odpairs. %\label{eq:strong_linking} 
\end{align*}
Since there is only a polynomial number of strong linking inequalities they may be included in the initial problem formulation.
This is our default approach for \eqref{eq:cut} and aligns with most of the related prior work.
Because the inclusion of all strong linking inequalities can however make the problem highly degenerate \cite{Hellsten2021}, in \eqref{eq:path} they are added in a lazy manner by inspection for each station \citep{chouman2017commodity}.

\end{defn}

\begin{defn}[Lifted cover inequality (LCI)]

Lifted cover inequalities are valid inequalities derived from knapsack constraints like the capacity constraints (\ref{eq:cut-capacity}).
For any given station $v$, a set $C_v \subseteq \odpairs$ is called a \emph{cover} if $\sum_{q \in C_v} f_q > \kappa_v$, i.e.\ a set of \od\ pairs whose total flow volume exceeds the capacity of a refueling station.
A cover is minimal if no proper subset of $C_v$ is also a cover. For any minimal cover $C_v$, the inequality
\begin{equation*}
    \sum_{q \in C_v} z_v^q \leq |C_v| - 1
\end{equation*}
\noindent is called a \emph{cover inequality} and is always valid for all integer solutions of \eqref{eq:cut}.

Cover inequalities are not generally facet-defining for the knapsack polytope, but they can be strengthened by a lifting procedure.
Indeed, given any minimal cover $C_v$, there exists a facet-defining \emph{lifted cover inequality} (LCI),
\begin{equation}
    \sum_{q \in C_v} z_v^q + \sum_{q \in \odpairs \setminus C_v} \alpha_q z_v^q \leq |C_v| - 1 \label{eq:lci_simple}
\end{equation}

\noindent where $\alpha_q$ is some non-negative integer. Given a minimal cover, we can determine the values for $\alpha_q$ through a process called \emph{lifting}. Given the solution to a LP relaxation, \cite{gu1999lifted} showed that solving the separation problem exactly and finding the most violated LCI is NP-Hard. Instead of solving the separation problem exactly, we can solve it using two lightweight heuristics that perform well in practice -- coefficient independent minimal cover generation introduced by \cite{gu1998lifted} and an improved Balas' lifting procedure from \cite{letchford2019lifted}.

\end{defn}

% \begin{align}
% \max \quad & \sum_{q \in Q} \sigma_q - \sum_{v \in \stations} \sum_{j \in J(v)} \lambda_v^j (|C_v^j| - 1) \tag{PF-DUAL-VI} \label{eq:dual_iv} \\
% & \sum_{v \in \stations} (\pi_v^q + \mu_v f_q + \sum_{j \in J(v)} \lambda_v^j \gamma_q^{jv})\delta_v^p  - \sigma_q \geq 0& \forall q \in \odpairs ,\; P \in \P_q \label{eq:dual_iv-price}\\
% & c_v - \mu_v \kappa_v - \sum_{q\in Q} \pi_v^q \geq 0 & \forall v \in \stations\\
% & \pi_v^q, \mu_v, \sigma_q, \lambda_v^j \geq 0 & \forall v \in \stations, \; q \in \odpairs,  \; j \in J(v)
% \end{align}

% !TeX root = ../main-arxiv.tex

\section{Optimization Methods}
\label{sec:solution}

% \begin{figure}
%     \centering
%     \includegraphics[width=.6\textwidth]{figures/branch-and-bound.jpg}
%     \caption{Branch-cut-and-price loop. Branch-and-cut skips the pricing block.}
%     \label{fig:bcp_diagram}
% \end{figure}

% We solve (\ref{eq:cut}) using \textit{branch-and-cut} and (\ref{eq:path}) using \textit{branch-cut-and-price.}  Figure \ref{fig:bcp_diagram} summarizes the building blocks and logic of both algorithms. In these \textit{branch-and-bound} methods, we construct a search tree that starts with solving an initial LP relaxation as the root node.
In this section we present algorithms to solve the problem cut formulation \eqref{eq:cut} and the path formulation \eqref{eq:path} introduced in Section~\ref{sec:model}.
At a high level, we solve \eqref{eq:cut} using \textit{branch-and-cut} and \eqref{eq:path} using \textit{branch-cut-and-price.} 
In these \textit{branch-and-bound} methods, we construct a search tree that starts with solving an initial LP relaxation as the root node.
For the cut formulation, the root node corresponds to a linear program with only an initial set of coverage constraints.
Likewise, for the path formulation, the root node corresponds to a linear relaxation of (\ref{eq:path}) initialized with a subset of path variables $\P' \subset \P$ that allow a feasible solution.

Subsequently, we enter the main steps of the branch-price-and-cut algorithm summarized as follows:
\begin{enumerate}[i.,itemsep=2pt,parsep=0pt]
\item Select a node from the search tree according to some selection strategy.
\item Solve the node's LP relaxation including relevant branching constraints.
\item Run the pricing algorithm (if applicable) to add columns (variables) and run separation algorithms to add rows (constraints) to all nodes in the search tree and resolve.
% Separating strong linking inequalities is given higher priority than separating LCIs.
\item Branch and bound the solution space given the fractional LP solution.
%Branching adds nodes to the search tree with local constraints on variables with fractional solution values.
%Bounding prunes nodes in the search tree whose LP solution is worse than the best integer solution found so far.
\item Run heuristic algorithms given the current LP solution to search for integer solutions.
\end{enumerate}

The implementation of branch-cut-and-price methods depends on a number of design choices.
In the following sections, we describe how we initialize the cut and path formulations of the problem and present pricing and separation algorithms that we implement.
Additionally, we describe the branching procedure for the path formulation and the custom heuristics used to obtain primal solutions.

We construct the branch-and-bound tree with CPLEX \citep{CPLEX20.1} in the case of \eqref{eq:cut} and with SCIP \citep{SCIP8.0} in the case of \eqref{eq:path}.
In both algorithms, columns and rows are added globally with the default age limits provided by CPLEX and SCIP respectively. 
For \eqref{eq:cut}, we add time separators for integer solutions as lazy constraints and add all other separators including time separators for fractional solutions, LCIs, and strong linking inequalities as additional cutting planes.
For \eqref{eq:path}, we implement separators using SCIP's constraint handlers and their constraint enforcement callbacks. 

\subsection{Initialization of constraints and variables}

We include all capacity constraints \eqref{eq:cut-capacity} and \eqref{eq:path-capacity} in the initial LP relaxation.
The strong linking constraints are included initially for \eqref{eq:cut}, while we generate them dynamically for \eqref{eq:path}.

\subsubsection{Initialization of time-separator constraints}

In principle we could start with an empty set of time-separator constraints to solve an initial relaxation of \eqref{eq:cut}.
However, in branch-and-cut algorithms it is typically helpful to include a suitable subset of constraints in the root relaxation.
This improves the initial bound and can significantly reduce the separation effort in later iterations.

To this end, we adopt the following approach commonly applied to connectivity problems.
For vertices $u,v \in \V_q$, let~$\Delta(u,v)$ denote the minimum \emph{hop distance} of $v$ from $u$ in $\G_q$.
Now, for each $q \in \odpairs$ and all $1 \leq k \leq \Delta(s_q, t_q) - 1$ we define
\begin{align*}
S_q^k = \{ v \in \V_q \mid \Delta(s_q, v) = k \text{ and } \Delta(v, t_q) = \Delta(s_q,t_q) - k \}.
\end{align*}
It is easy to see that each $S_q^k$ is a minimal separator for $q$ and thus it is also a time-separator, albeit not necessarily a minimal one.
We add time-separator constraints for all $S_q^k$ to the initial LP relaxation of \eqref{eq:cut}.
The sets $S^k_q$ can be determined by basic graph search algorithms.

\subsubsection{Initialization of path variables}

We initialize $\P'$ with an auxiliary rejection variable for each \od pair and equip them with a large cost in the objective.
The auxiliary rejection variables may be selected if no other time-feasible path is found.
From the model perspective they could represent the fallback option of diesel vehicles that transit directly from the origin to the destination site. 
The use of auxiliary variables is known to generate a more useful set of columns for the root node \citep{Hellsten2021}.
Moreover, their inclusion resolves infeasible nodes that may arise as a result of branching decisions.

\subsection{Delayed row generation}

At each node of the search tree we add cutting planes corresponding to inequalities that are violated by the current LP solution.
Cutting planes allow to cut off infeasible and certain fractional solutions from the set of feasible integer solutions, thereby accelerating the branch-and-bound process.
Finding these cutting planes requires solving \emph{separation problems} for the respective classes of inequalities.
In our approach, we generate time-separator inequalities for the cut formulation and additional lifted cover inequalities for both formulations.

\subsubsection{Time-separator constraints in the cut formulation}
\label{sec:int-sep}

Suppose $(\bar x, \bar z)$ is a solution to the LP relaxation at the given node of the branch-and-bound tree.
We need to check if $(\bar x, \bar z)$ satisfies all time-separator constraints, and otherwise determine a time-separator such that constraint \eqref{eq:cut-coverage} is violated.
In other words, we need to find a time-separator that is minimum w.r.t.\ weights $\bar z^q_v$.
Unfortunately, this is an NP-hard problem in general \citep{Baier2010}.
However, if $\bar z^q$ is binary, then the separation problem can be solved efficiently.
This suffices to implement a correct branch-and-cut algorithm.
We present below our approaches for separation of integer and fractional solutions, which in the integer case improves upon the approach of \cite{Arslan2019}, as demonstrated by the comparison in Section~\ref{sec:int-sep-results}.

\paragraph{Separating integer solutions}

In this case, we assume that for $q \in \odpairs$, the vector $\bar z^q$ is binary.
We show how to quickly find violated time-separators of small size.

Consider the \od pair subgraph $\G_q(\bar z) = (\V_q, \A_q(\bar z))$ that includes only those arcs whose tails are active nodes, as defined in
\begin{align*}
%    \V_q(\bar z) &= \{v \in \V_q \mid \bar z^{q}_i = 1\}, \\
    \A_q(\bar z) &= \{uv \in \A_q \mid u = s_q \text{ or } \bar z^{q}_u = 1\}.
\end{align*}

Recall that $\hat \tau_{ij}$ denotes the shortest path distance from $i$ to $j$ in $\G$ w.r.t.\ the arc transit times~$\tau$.
Similarly, we define $\hat \tau_{ij}(\bar z)$ as the shortest path distance from $i$ to $j$ in $\G_q(\bar z)$ w.r.t.\ $\tau$.
In other words, $\hat \tau_{ij}(\bar z)$ represents the shortest transit time if only the active refueling stations can be used.
By convention, we set $\hat \tau_{ij}(\bar z) = \infty$ if $\G_q(\bar z)$ contains no path from $i$ to $j$.

For $\bar z$ and $q \in \odpairs$ consider the sets
\begin{align}
S_q(\bar z) = \{ v \in \V_q \cap \stations \mid \bar z^q_v = 0 \text{ and } \hat \tau_{s_q,v}(\bar z) + \hat \tau_{v,t_q} \leq u_q \}. \label{eq:time-feasible-extension}
\end{align}
The inequality condition in \eqref{eq:time-feasible-extension} states that, for every $v \in S_q(\bar z)$, there exists a path in $\G_q(\bar z)$ from~$s_q$ to $v$ that is shortest w.r.t.\ $\tau$ and, moreover, this path can be extended to a time-feasible path within~$\G_q$.
If $\G_q(\bar z)$ contains no time-feasible path, then the extension requirement guarantees that $S_q(\bar z)$ is a time-separator, while in general it might not be a separator.
See Figure~\ref{fig:time-separator} for an example of such a time-separator.

\begin{prop}
Suppose $\tau_{s_q,t_q}(\bar z) > u_q$, that means $\G_q(\bar z)$ contains no time-feasible path.
Then the set $S_q(\bar z)$ defined in \eqref{eq:time-feasible-extension} is a time-separator for $q \in \odpairs$.
In addition, the coverage constraint \eqref{eq:cut-coverage} w.r.t.\ $\bar z$ and $S_q(\bar z)$ is violated.
\end{prop}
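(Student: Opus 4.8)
The statement has two parts: that $S_q(\bar z)$ is a time-separator for $q$, and that the coverage inequality \eqref{eq:cut-coverage} for the pair $(q, S_q(\bar z))$ is violated by $\bar z$. The second part is immediate and I would dispose of it in one line: by the very definition \eqref{eq:time-feasible-extension}, every $v \in S_q(\bar z)$ has $\bar z^q_v = 0$, so $\sum_{v \in S_q(\bar z)} \bar z^q_v = 0 < 1$. The content is in the first part, which I would prove by a path-tracing argument.

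\textbf{Producing a separator vertex on an arbitrary time-feasible path.} Fix an arbitrary time-feasible path $P = (s_q = v_0, v_1, \dots, v_k = t_q) \in \P_q$; I would exhibit a vertex of $S_q(\bar z)$ lying on $P$. The hypothesis is that $\G_q(\bar z)$ contains no time-feasible path, so $P$ cannot be entirely contained in $\G_q(\bar z)$: otherwise $P$ itself would be a time-feasible path in $\G_q(\bar z)$, since $\sum_{a \in P}\tau_a \leq u_q$. Hence some arc of $P$ is missing from $\A_q(\bar z)$; let $j$ be the smallest index with $v_{j-1}v_j \notin \A_q(\bar z)$. Since every arc of $\A_q$ with tail $s_q$ also lies in $\A_q(\bar z)$, we must have $j \geq 2$, and from the defining condition of $\A_q(\bar z)$ the tail $v := v_{j-1}$ satisfies $v \neq s_q$ and $\bar z^q_v = 0$. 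Moreover $1 \leq j-1 \leq k-1$, so $v$ is an internal vertex of the $s_q$–$t_q$ path $P$; since Definition~\ref{def:range-feasible} (using $\rangeorig \leq \rangedest$) rules out terminal-to-terminal arcs, every internal vertex of such a path is a refueling station, so $v \in \V_q \cap \stations$.

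\textbf{Verifying the transit-time condition.} It remains to check the inequality in \eqref{eq:time-feasible-extension} for this $v$, which I would do by splitting $P$ at $v$. By minimality of $j$, the prefix $(s_q, \dots, v_{j-1}) = (s_q, \dots, v)$ uses only arcs in $\A_q(\bar z)$, hence is a walk in $\G_q(\bar z)$ from $s_q$ to $v$, so $\hat\tau_{s_q,v}(\bar z) \leq \sum_{i=1}^{j-1}\tau_{v_{i-1}v_i}$. The suffix $(v_{j-1}, \dots, v_k) = (v, \dots, t_q)$ is a sub-path of $P$ and therefore a path in $\G$, so $\hat\tau_{v,t_q} \leq \sum_{i=j}^{k}\tau_{v_{i-1}v_i}$. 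Adding the two and using time-feasibility of $P$,
\[
\hat\tau_{s_q,v}(\bar z) + \hat\tau_{v,t_q} \;\leq\; \sum_{i=1}^{k}\tau_{v_{i-1}v_i} \;=\; \sum_{a \in P}\tau_a \;\leq\; u_q .
\]
Together with $v \in \V_q \cap \stations$ and $\bar z^q_v = 0$, this shows $v \in S_q(\bar z)$; since $v$ lies on $P$, we get $P \cap S_q(\bar z) \neq \emptyset$. As $P$ was an arbitrary time-feasible path for $q$, $S_q(\bar z)$ is a time-separator, which completes the argument.

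\textbf{Main obstacle.} The delicate point is the bookkeeping around ``the first vertex at which $P$ leaves $\G_q(\bar z)$'': one has to be careful that such a vertex exists (this is exactly where the hypothesis that $\G_q(\bar z)$ admits no time-feasible path is used), that it is not the origin (arcs out of $s_q$ are never removed), and that it is a refueling station rather than a terminal. One must also keep straight the asymmetry in \eqref{eq:time-feasible-extension} — that $\hat\tau_{s_q,v}(\bar z)$ is measured in the restricted graph $\G_q(\bar z)$ while $\hat\tau_{v,t_q}$ is measured in the full graph $\G$ — since it is precisely this asymmetry that lets the prefix bound and the suffix bound add up to at most $u_q$.
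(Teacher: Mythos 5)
Your proof is correct and follows essentially the same route as the paper's: both take an arbitrary time-feasible path $P$, locate the first inactive station $v$ on it (your ``tail of the first missing arc'' is exactly the paper's ``first node with $\bar z^q_v=0$''), observe that the prefix of $P$ up to $v$ lies in $\G_q(\bar z)$ while the suffix certifies the time-feasible extension, and dispose of the violation claim by noting $\bar z^q_v=0$ on all of $S_q(\bar z)$. Your version merely spells out the bookkeeping (existence of the missing arc, $v\neq s_q$, the prefix/suffix transit-time split) that the paper leaves implicit.
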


\begin{proof}
Let $P \in \P_q$ be a time-feasible path.
Since $P$ is not contained in $\G_q(\bar z)$, there exists a station node $v \in P$ such that $\bar z^q_v = 0$.
Let $v$ be the first such node on $P$ in the order from $s_q$ to $t_q$.
This implies that the subpath of $P$ from $s_q$ to $v$ is contained in $\G_q(\bar z)$.
Since $P$ is time-feasible, it follows that $v \in S_q(\bar z)$.
Hence, we have shown that $S_q(\bar z)$ is a time-separator for $q$.
The second assertion immediately follows from the fact that $\bar z^q_v = 0$ for all $v \in S_q(\bar z)$.
\end{proof}

The sets $S_q(\bar z)$ can be computed efficiently by applying Dijkstra's algorithm once to each of $\G_q(\bar z)$ and $\G_q$ and subsequent graph search.
This allows to quickly determine violated time-separators of relatively small size.
In order to improve the strength of the associated inequalities further, we may iteratively remove vertices from $S_q(\bar z)$ and check with Dijkstra if the remaining set is still a time-separator.
Our approach compares favorably with that of \cite{Arslan2019}, who apply the same iterative removal process to obtain a minimal time-separator, but start with the trivial (and in general large) time-separator~$\{ v \in \V_q \cap \stations \mid \bar z^q_v = 0 \}$.

\begin{figure}[!t]
\center
{
\begin{minipage}[t]{0.49\linewidth}
\center
% !TeX root = ../main.tex

\begin{tikzpicture}[scale=1.4]

\tikzstyle{inactive}=[thick,draw,circle,minimum size=8pt,inner sep=0pt]
\tikzstyle{active}=[fill,thick,draw,circle,minimum size=8pt,inner sep=0pt]

\tikzstyle{def}=[thick,->,>=stealth]

\node[inactive,label=left:$s$,rectangle] (s) at (0,0) {};
\node[inactive,label=above:$a$] (a) at (1,1.2) {};
\node[inactive,label=above:$b$] (b) at (1.4,-1) {};
\node[inactive,label=above:$c$] (c) at (2.0,0.2) {};
\node[inactive,label=above:$d$] (d) at (3,1) {};
\node[inactive,label=right:$t$,rectangle] (t) at (3.5,-0.5) {};

\draw (s) edge[def] node[below] {2} (a);
\draw (s) edge[def] node[below] {2} (b);
\draw (a) edge[def] node[left] {1} (c);
\draw (b) edge[def] node[left] {1} (c);
\draw (a) edge[def] node[above] {1} (d);
\draw (d) edge[def] node[right] {2} (t);
\draw (c) edge[def] node[above] {3} (t);
\draw (b) edge[def] node[above] {3} (t);

\end{tikzpicture}
\end{minipage}
\hfill
\begin{minipage}[t]{0.49\linewidth}
\center
% !TeX root = ../main.tex

\begin{tikzpicture}[scale=1.4]

\tikzstyle{inactive}=[thick,draw,circle,minimum size=8pt,inner sep=0pt]
\tikzstyle{active}=[fill,thick,draw,circle,minimum size=8pt,inner sep=0pt]

\tikzstyle{def}=[thick,->,>=stealth]

%\node[site,label=above:$\mathrm{A}$] (A) at (0,3) {};
%\node[tour-start] at (A) {};
%\node[site,label=right:$\mathrm{B}$] (B) at (3,2.5) {};
%\node[site,label=below:$\mathrm{C}$] (C) at (2.5,0.5) {};
%\node[site,label=left:$\mathrm{D}$] (D) at (-0.5,1) {};

\node[active,label=left:$s$,rectangle] (s) at (0,0) {};
\node[active,label=above:$a$] (a) at (1,1.2) {};
\node[inactive,label=above:$b$] (b) at (1.4,-1) {};
\node[inactive,label=above:$c$] (c) at (2.0,0.2) {};
\node[inactive,label=above:$d$] (d) at (3,1) {};
\node[active,label=right:$t$,rectangle] (t) at (3.5,-0.5) {};

%\draw (A) edge[active,bend left] node[above] {\small Direct leg} node[below] {\small 250 km} (B);
%\draw (B) edge[active, bend left] node[station] {} node[right,right=30,above=2] {\small Refueling leg} node[left=2] {\small 100 km}(C);
%\draw (C) edge[active, bend left] node[below] {\small Direct leg} node[above=5] {\small 150 km}(D);
%\draw (D) edge[active, bend left] node[right=30,below=4] {\small Refueling leg} node[left=15,above=5] {\small 150 km} node[station] {} (A);

\draw (s) edge[def] node[below] {2} (a);
\draw (s) edge[def] node[below] {2} (b);
\draw (a) edge[def] node[left] {1} (c);
%\draw (b) edge[def] node[left] {1} (c);
\draw (a) edge[def] node[above] {1} (d);
%\draw (d) edge[def] node[right] {2} (t);
%\draw (c) edge[def] node[above] {3} (t);
%\draw (b) edge[def] node[above] {3} (t);

\end{tikzpicture}
\end{minipage}
}
\caption
{Example computation of a time-separator that is not a separator\label{fig:time-separator}. Left: Example graph $\G_q$ with origin $s$, destination $t$ and transit time labels. Right: Active subgraph $\G_q(\bar z)$ where the active nodes $s,t$ and $a$ are filled. For an upper transit time bound of $u = 5$, we compute the time-separator $S_q(\bar z) = \{b,d\}$. Node $c$ is not contained in $S_q(\bar z)$, because there is no time-feasible path that includes $c$.}
\end{figure}
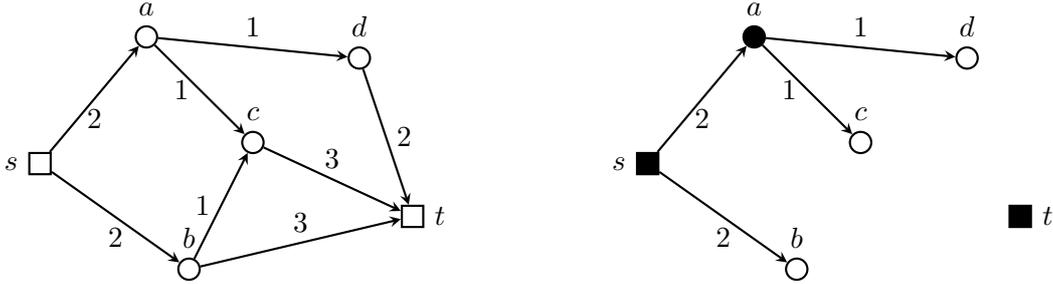

\paragraph{Separating fractional solutions}

In this case the vector $\bar z^q$ is fractional.
We apply the same heuristic approach as \cite{Arslan2019}, which is to find a separator $S$ that is minimum w.r.t.\ the weights $\bar z^q_v$.
The separator $S$ can be found efficiently via max-flow/min-cut algorithms applied to a transformation of the input graph $\G_q$ that splits every refueling node $v$ into two separate nodes for incoming and outgoing arcs and adds a link with associated capacity $\bar z^q_v$.

Similar to the integer case above, we may iteratively remove refueling stations from $S$ until it is a minimal time-separator.
Finally, if $\sum_{v \in S} \bar z_v^g < 1$, then we have identified a violated cut that cuts off the fractional solution~$(\bar x, \bar z)$ from the feasible set.

While the overall solution may contain fractional values, there may exist \od\ pairs such that all global and local refueling station variables for every station in their subgraphs have integer values.
When an \od\ pair subgraph has a locally integer solution, we may call our integer separation routine.

\subsubsection{Separation of lifted cover inequalities}

%For simplicity, we describe the separation of lifted cover inequalities only for the cut formulation.
%As before in Section~\ref{sec:valid-inequalities}, the procedure can be transferred to the path formulation via equation \eqref{eq:cut2path}.

Suppose $(\bar x, \bar z)$ is a solution to the LP relaxation. First, we identify all saturated refueling stations.
A refueling station is saturated if the current routing assignment completely utilizes all its capacity resources, i.e.\ it holds that $\sum_{q\in \odpairs} f_q \bar z_v^q \geq \kappa_v\bar x_v$.
Next, for every saturated refueling station, we find a minimal cover and use Balas' lifting procedure to generate the lifing coefficients for the LCI, see below for details.
Afterwards, if the current LP solution violates the LCI, then we add this constraint to (\ref{eq:cut}).

We generate a minimal cover $C_v^j$ for refueling station $v$ as follows.
First, we initialize $C_v^j = \emptyset$.
Then, we sort all $\bar z_v^q$ for $q \in \odpairs$ in non-increasing order and add them to $C_v^j$ sequentially until $\sum_{q \in C_v^j} f_q > \kappa_v$.
A cover constructed this way is not necessarily minimal, so we can remove variables from $C_v^j$ until it becomes minimal. 

To generate the LCI coefficients $\alpha_q^j$ for LCI $j$, we use Balas' lifting procedure. Let $S(r)$ denote the sum of the $r$ largest $f_q$ values over the members of the minimal cover $C_v^j$ we just constructed. We further define $S(0) = 0$. According to Balas' lifting procedure, the lifting coefficient $\alpha_q^j$ is the unique integer that satisfies $S(\alpha_q^j) \leq f_q < S(\alpha_q^j + 1)$.

For the determined cover $C_v^j$ and lifting coefficients $\alpha_q^j$, we add the corresponding LCI defined in~\eqref{eq:lci_simple}, which is violated by $(\bar x, \bar z)$.

\subsection{Pricing out path variables}
\label{sec:pricing}

In this section we derive the pricing problem to generate variables during column generation for solving \eqref{eq:path}.
We specify the linear program \eqref{eq:path-master} below as the \emph{master problem} during column generation:
\begin{align}
\min \quad & \sum_{v \in \stations} c_v x_v \tag{MP} \label{eq:path-master} \\
\text{s.t.} \quad
& \sum_{P \in \P_q} y_P = 1 & \forall q \in \odpairs \label{eq:path-master-coverage} \\
& \sum_{q \in \odpairs} \sum_{P \in \P_q} f_q\delta_v^P y_P \leq \kappa_v x_v & \forall v \in \stations \label{eq:path-master-capacity} \\
& \sum_{P \in \P_q} \delta_v^P y_P \leq x_v & \forall v \in \stations, \; \forall q \in \odpairs \label{eq:path-master-strong-link} \\
& \sum_{q \in \odpairs} \sum_{P \in \P_q} \alpha^j_q \delta^P_v y_P \leq |C^j_v| - 1 & \forall v \in \stations, \forall j \in \J_v \label{eq:path-master-lci} \\
& 0 \leq x_v \leq 1 & \forall v \in \stations \label{eq:path-station-bound}\\
& 0 \leq y_P & \forall P \in \P. \label{eq:path-non-neg}
\end{align}
The master problem is a linear relaxation of \eqref{eq:path} obtained by replacing the binary constraints~\eqref{eq:path-int} with variable bounds \eqref{eq:path-station-bound}--\eqref{eq:path-non-neg} and adding the strong linking constraints \eqref{eq:path-master-strong-link} as well as additional lifted cover inequalities \eqref{eq:path-master-lci} indexed by $j \in \J_v$ for all $v \in \stations$.
The \emph{restricted master problem} (RMP) only contains variables for a subset of paths $\P' \subseteq \P$ instead of all possible paths.
During column generation, we continue adding paths to the RMP that improve its objective value.
The problem of finding the path $P \in \P$ that most improves the objective value of the RMP is called the \emph{pricing problem}.
Since the pricing problem can be derived as a separation problem for the dual of the master problem, we formulate this dual problem as \eqref{eq:pricing} below:

\begin{align}
\max \quad & \sum_{q \in \odpairs} \sigma_q + \sum_{v \in \stations} \Big [ \beta_v + \sum_{j \in \J_v} (|C^j_v|-1)\lambda^j_v \Big ]  \tag{MP-D} \label{eq:pricing} \\
\text{s.t.} \quad 
&  \kappa_v \mu_v - \beta_v + \sum_{q \in \odpairs} \pi^q_v \leq c_v & \forall v \in \stations \nonumber \\
& \sum_{v \in \stations} \delta^P_v \Big [ f_q \mu_v + \pi_v^q + \sum_{j \in J_v} \alpha_q^{j} \lambda^j_v  \Big ] - \sigma_q \geq 0 & \forall q \in \odpairs ,\; P \in \P_q \label{eq:dual_path} \\
& \mu_v, \pi^q_v, \lambda^j_v \geq 0 & \forall v \in \stations, \; q \in \odpairs, \; j \in \J_v. \nonumber
\end{align}

Here we introduced dual variables $\sigma_q$ for coverage constraints \eqref{eq:path-master-coverage}, $\mu_v$ for capacity constraints \eqref{eq:path-master-capacity}, $\pi_v^q$ for strong linking constraints \eqref{eq:path-master-strong-link}, $\lambda^j_v$ for LCIs \eqref{eq:path-master-lci} and $\beta_v$ for upper bounds \eqref{eq:path-station-bound}, see also Table~\ref{tab:notation} for reference.
The pricing problem is the separation problem for \eqref{eq:dual_path}, i.e.\ to identify for every $q \in \odpairs$ a time-feasible path~$P \in \P_q$ that minimizes
\begin{align}
\sum_{v \in \stations} \delta^P_v \Big [ f_q \mu_v + \pi_v^q + \sum_{j \in J_v} \alpha_q^{j} \lambda^j_v  \Big ] - \sigma_q. \label{eq:path-value}
\end{align}
If the value of \eqref{eq:path-value} for $P$ is less than 0, then $P$ is added to the RMP.
We repeat the pricing loop by resolving the RMP and the pricing problem until no further paths are found, which means we computed an optimal solution to the RMP.

Path formulations for network design problems are known to suffer from degeneracy \citep{Hellsten2021}. 
This can cause a ``tailing-off effect'' where the pricer makes less progress each iteration as we approach the optimum of the RMP.
When the cost coefficients $c_v$ are integer, we mitigate the tailing-off effect by terminating pricing early, 
which proved necessary for solving \rslprc for large road networks with many \od\ pairs.

Our criteria for early termination of pricing are derived as follows.
During each pricing round we calculate a Lagrangian lower bound to the MP as the sum of the optimum of the RMP and the optima of that round's pricing problems \citep{lubbecke2005selected}.
Now we stop pricing under either of two conditions.
i.\ We stop pricing if the integer part of the current node's solution to the RMP is equal to the integer part of the global lower bound or its parent node's lower bound (if a parent exists).
ii.\ We stop pricing if the Lagrangian bound of the MP and the solution to the RMP have equal integer parts. 
In both cases, the addition of further variables will not improve the value of the integer solution, so we can branch early and reduce the time spent pricing.

The pricing problem itself can be formulated as a \emph{constrained shortest path} (CSP) problem on the subraphs $\G_q$ w.r.t.\ path lengths defined by \eqref{eq:path-value} and the transit time constraint \eqref{eq:time-feasible-path}.
The CSP problem is NP-hard in general \citep{handler1980dual}, but many practical instances can be solved efficiently using dynamic programming approaches \citep{irnich2005shortest, pugliese2013survey}.
Furthermore, simple and very fast heuristics like LARAC based on repeated application of Dijkstra's algorithm are available \citep{xiao2005gen}.
Once LARAC finds no more variables corresponding to constrained shortest paths to add or a condition for early termination is met, we run an exact CSP solver to verify no additional variables should be added.

\subsubsection{Branching for fractional path variables}

When constructing the search tree for \eqref{eq:path} we prioritize branching on station variables $x_v$ first.
Eventually, we may arrive at a node with a fractional LP solution where all station variables $x_v$ are integer.
In this case we branch on fractional path variables $y_P$ using the strategy from \cite{barnhart2000using}, which is summarized as follows:
\begin{enumerate}[i.,itemsep=2pt,parsep=0pt]
\item Take the largest flow $f_q$ among all \od\ pairs whose flow is split.
\item Determine the two paths $P, \; P' \in \P_q$ with the greatest fractional values $y_P, \; y_{P'}$.
\item Identify the refueling station $v$ where the paths diverge before reconvening at the destination site $t_q$.
\item Divide the set of arcs outgoing from $v$ as evenly as possible into two disjoint sets $A_1$ and $A_2$ such that
$A_1$ contains the arc used by path $P$ and $A_2$ contains the arc used by path $P'$. 
\item Create two child nodes, one that forbids arcs in $A_1$ for \od\ pair $q$ and the other that forbids arcs in $A_2$.
\end{enumerate}
This branching rule can be implemented by appropriately deleting arcs of $\G_q$ in the child nodes.
Then the pricing remains a CSP problem in the modified graphs.
Formally, the branching rule fixes~$y_P = 0$ in the RMP for any path $P$ that uses a forbidden arc.

\subsection{Primal heuristics}

We present a primal heuristic for \eqref{eq:cut} that, given a fractional LP solution $(\bar{x}, \bar{z})$ generates an integer solution $(x, z)$ for the \rslprc.
The algorithm is based on constrained shortest paths and summarized in Alg.~\ref{alg:primal}. 
We include pseudocode for the referenced helper methods $\textsc{getCSP}$ (Alg.~\ref{alg:getCSP}) and $\textsc{ResolveInfeasibility}$ (Alg.~\ref{alg:resolve}) in Appendix \ref{app:heuristic}.

Our primal heuristic first initializes a queue containing all \od\ pairs. 
While the queue is not empty, the algorithm will pick the next \od\ pair $q$ and call Alg.~\ref{alg:getCSP} to find a constrained shortest path (CSP) from $s_q$ and $t_q$ in the subgraph $\G_q = (\V_q, \A_q)$ that is minimal with respect to refueling station costs $l_v^q$. 
We determine these refueling station costs $l_v^q$ as follows:

\begin{enumerate}
    \item If the sum of $f_q$ and the demand of all other \od\ pairs previously assigned to refueling station $v$ exceeds the refueling station capacity $\kappa_v$, then set $l_v^q = \infty$.
    \item Otherwise, set $l_v^q = 0$ if the heuristic has already used $v$ for another \od\ pair and $l_v^q = 1 - \bar{z}_v^q$ else.
\end{enumerate}

As the heuristic iterates over \od\ pairs in the queue and determines refueling station costs, the solution to the CSP problem $P_q$ may contain a station with $l_v^q = \infty$. 
This occurs when all time-feasible paths in $\G_q$ contain refueling stations that lack capacity to accommodate $f_q$.
To resolve this infeasiblity, we remove paths from the solution until $P_q$ is feasible in Alg.~\ref{alg:resolve}.
We remove paths and their respective \od\ pairs from the solution in first-in-first-out order.
Any \od\ pairs whose paths are removed from the solution are added back to the queue of \od\ pairs so that the algorithm can generate new paths for them at future iterations. 

Note that the heuristic does not work for \eqref{eq:path}, as it may generate paths whose associated variables have not yet been added to the restricted master problem.
However, since feasibility is not an issue after initialization, we can rely on the internal heuristics used by SCIP.

% \begin{algorithm}
% \caption{Primal Heuristic for \eqref{eq:cut}}\label{alg:cap}
% \begin{algorithmic}[1]
% \Procedure{CSPHeuristic}{$\bar{x}_v$, $\bar{z}_v^q$}
%     \State $k_v, x_v, z_v^q \gets 0, \; \forall v \in \stations, q \in \odpairs$
%     \State $l_v^q \gets \infty, \; \forall v \in \stations, q \in \odpairs$
%     \ForAll{$q \in \odpairs$}
%         \ForAll{$v \in \V_q$}
%             \If{$f_q + k_v \leq \kappa_v$}
%                 \If{$x_v$ = 1}
%                     $l_v^q \gets 0$
%                 \Else
%                     \State $l_v^q \gets 1 - \bar{z}_v^q$
%                 \EndIf
%             \EndIf
%             \State $P \gets \textsc{CSP}(\G_q, u_q, l_v^q)$
%             \State $k_v \gets k_v + f_q, \; \forall v \in P$
%             \State $x_v \gets 1, \; \forall v \in P \setminus \{s_q, t_q\}$
%             \State $z_v^q \gets 1, \; \forall v \in P \setminus \{s_q, t_q\}$
%         \EndFor
%     \EndFor
% \EndProcedure
% \end{algorithmic}
% \label{alg:primal}
% \end{algorithm}

\begin{algorithm}[h!]
\caption{Primal Heuristic for \eqref{eq:cut}}\label{alg:cap}
\begin{algorithmic}[1]
\Procedure{CSPHeuristic}{$\bar{x}$, $\bar{z}$}
    \State $\textsc{O-DQueue} \gets \{q, \; \forall q \in \odpairs\}$
    \State $\textsc{StationQueue}(v) \gets \{\} \; \forall v \in \stations$
    \State $P_q \gets \{\} \; \forall q \in \odpairs$
    \State $k_v, x_v, z_v^q \gets 0, \; \forall v \in \stations, q \in \odpairs$
    % \State $\textsc{Queue}$ \gets $\{q \; \forall q \in \odpairs\}$
    \While{$\textsc{O-DQueue} \; \text{is not empty}$}
        \State $q \gets \textsc{O-DQueue}.\text{dequeue()}$
        
        \State $P_q \gets \textsc{getCSP}(q, k, \bar{z})$ \Comment{(Alg.~\ref{alg:getCSP})}
        \State $\textsc{ResolveInfeasibility}(P_q, k, z, \textsc{StationQueue}, \textsc{O-DQueue})$ \Comment{(Alg.~\ref{alg:resolve})}
        
    \EndWhile
    
\ForAll{$v \in \stations$}
    \If{$k_v > 0$}
        \State $x_v \gets 1$
    \EndIf
\EndFor

\Return{$x_v, z_v^q \; \forall v \in \stations, q \in \odpairs$}
\EndProcedure
\end{algorithmic}
\label{alg:primal}
\end{algorithm}

% \begin{algorithm}
% \caption{The Bellman-Kalaba algorithm}
% \begin{algorithmic}[1]
% \Procedure {BellmanKalaba}{$G$, $u$, $l$, $p$}
% \ForAll {$v \in V(G)$}
% \State $l(v) \leftarrow \infty$
% \EndFor

% \end{algorithmic}
% \end{algorithm}

% !TeX root = ../main-arxiv.tex

\section{Numerical Experiments}
\label{sec:experiments}

In this section we present numerical results obtained with our implementations of the methods presented in Section~\ref{sec:solution}.
We first describe the experimental setup and the constructed problem instances and then present the results of our evaluation.

\paragraph{Implementation details}
We implement our algorithm for solving \eqref{eq:cut} with CPLEX \citep{CPLEX20.1} and for \eqref{eq:path} with SCIP \citep{SCIP8.0} via PySCIPOpt \citep{MaherMiltenbergerPedrosoRehfeldtSchwarzSerrano2016} compiled with CPLEX as an LP solver.
Both solvers are run via a Python interface.
To increase the performance of graph operations such as finding shortest paths and constrained shortest paths, we implement a \texttt{C++} extension for Python that takes advantage of the Boost Graph Library \citep{siek2002boost}. 

We implement integer separation routines using lazy constraint callbacks and fractional separation routines using user cut callbacks.
We reduce the overall time spent on separation by limiting the number of separation rounds per branch-and-bound node.
For \eqref{eq:path} we perform only one separation in nodes other than the root.
For \eqref{eq:cut} we perform only one fractional separation round in any node.

Unless specified otherwise we use the default branching rules of CPLEX and SCIP.
All numerical experiments are carried out on an Intel Core i7-7700 CPU machine with 16GB of RAM. 

\subsection{Problem instances}

We construct two different types of problem instances to evaluate our algorithms.
The first we obtain from data of the California road network that was also used by \cite{Arslan2019, Yildiz2016}.
The second we generate ourselves from public road data in Europe and randomly sampled locations.
In order to obtain instances of the \rslprc, we compute expanded network graphs for both types of data.
The Europe network graph is an order of magnitude larger and hence gives rise to more challenging problem instances. 
We summarize properties of both graphs in Table \ref{tab:instances}. Their nodes are plotted in Figure~\ref{fig:maps}.

\begin{figure*}[!ht]
\centering
{
\subfloat[California]{\includegraphics[width=.46\linewidth]{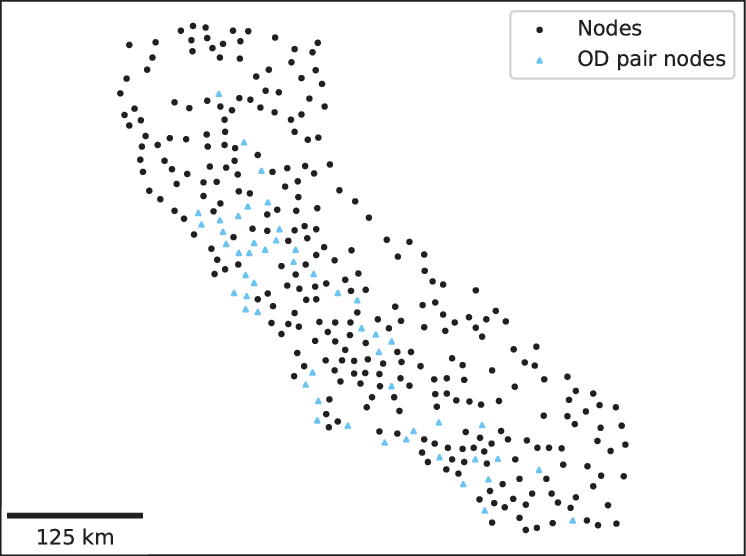} \label{fig:ca_map}}
\hspace{1.5em}
\subfloat[Europe]{\includegraphics[width=.46\linewidth]{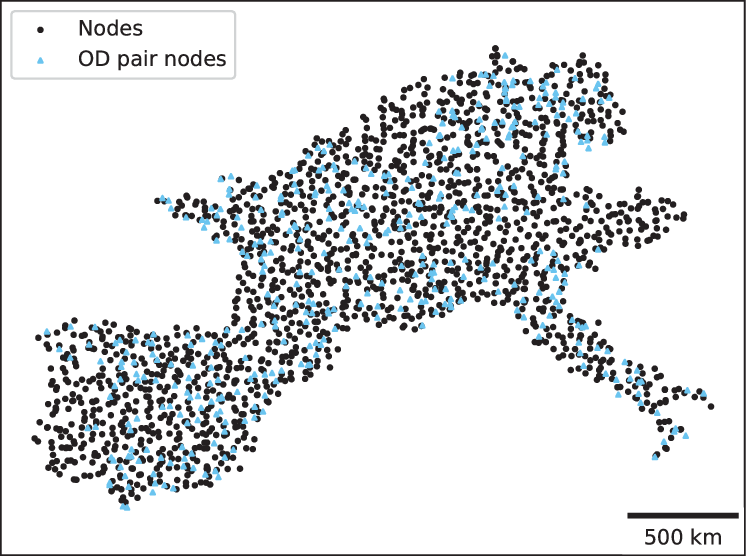}\label{fig:eu_map}}
}
\caption
{Visualization of network nodes for CA and EU instances\label{fig:maps}. \od\ pair nodes are shown as blue triangles, the refueling station candidates are black dots.}
\end{figure*}

\subsubsection{California road network}
This network has 339 nodes corresponding to intersections and major population centers, and 1,234 weighted arcs representing road connections between the nodes.
The corresponding expanded network graph contains nearly 25,000 arcs.
The arc distance in the expanded graph is the distance between their endpoints in the road network.
For simplicity, we assign an arc transit time that is proportional to its distance by multiplying with an average speed of $60$km/h.
The set of all population centers with 50,000 or more people are designated as \od\ pair nodes, and there are 1,167 \od\ pairs in total.

\subsubsection{Europe road network}

For this network, we sample locations uniformly at random in the following European countries: France, Belgium, Germany, Netherlands, Switzerland, Luxembourg, Austria, Spain, Portugal, and Italy.
For refueling stations we used nodes in all countries, while for terminal sites we excluded the Netherlands, Belgium, Austria, Switzerland and Portugal.
Figure~\ref{fig:eu_map} shows the locations of \od\ pair nodes and the candidate locations for refueling stations across Europe.

Then we determine distances and transit times of the expanded network graph by computing fastest routes in a real-world road network.
To this end we use an OpenSourceRoutingMachine (OSRM) \citep{luxen-vetter-2011} in conjunction with OpenStreetMap (OSM) data \citep{OpenStreetMap}, which we filter first down to the main roads to simplify computations.
More precisely, we only include roads labeled with the tags motorway, trunk, primary, secondary, motorway link, trunk link, primary link, and secondary link.
These road connections range from major divided highways down to smaller roads that link towns but do not include links between smaller towns or residential streets.
The resulting road network contains 2,688,299 ways and 20,476,319 nodes.
Due to the filtering, for some arcs of the expanded network no route could be computed. In this case we assign the same transit time and distance as its reverse arc (if it is available).

In this network we also model the time spent on refueling by adding a refueling time that is proportional to distance traveled for every arc that ends in a refueling station.
Specifically, we add 30mins $\cdot\; {\ell_a}/{\rangemax}$ to the transit time of arc $a \in \A$, where 30mins represents the time for a full refueling.

Among the terminal sites, we randomly selected a majority to be destination nodes and the rest to be origin nodes or both.
Then, we generate a set of \od pairs, where most represent regional, i.e.\ short distance demands, and some intermediate as well as long distance demands.

\subsubsection{Parameter settings}
We construct problem instances so as to closely mimic the scenarios tested by \cite{Arslan2019, Yildiz2016}.
Therefore, similar to these works, we determine the upper bound on transit time for each \od\ pair $q$ by adding a percentage $\lambda$ to the shortest path length between the origin and destination sites for a conventional vehicle.
For example, if $\lambda = 15\%$, \afvs cannot deviate more than 15\% from the shortest path from $s_q$ to $t_q$.
We assume the deviation tolerance is uniform across all \od\ pairs.
Also, while our model is capable of handling different building costs and capacities for each refueling station, we assume these are equal for all stations and \od\ pairs ($c_v = 1, \; \kappa_v = \kappa, \; \forall v \in \stations$). 
Further, we assume the half-capacity requirement, i.e.\ $\rangeorig = \rangedest = \rangemax/2$.

In order to obtain a range of different instances we vary three main parameters that affect the computational difficulty: (i) the vehicle's maximum range $\rangemax$, (ii) the transit time deviation tolerance $\lambda$, and (iii) the capacity of the refueling stations $\kappa$.

Different ranges and deviation tolerances affect the sizes of the subgraphs as documented in Table~\ref{tab:subgraphs}.
In general, increasing the deviation tolerance increases the number of nodes and arcs in subgraphs since \afvs have more freedom to travel to far-away refueling stations.
Increasing the range of the \afvs increases the number of arcs in the subgraphs since \afvs can reach more locations from every node.
Decreasing the capacity $\kappa$ increases the difficulty of the routing subproblem, which is to find time-feasible paths for all \od pairs while satisfying the capacity constraint.

\begin{table}[]
\centering
\caption{Statistics of the constructed expanded network graphs\label{tab:instances}.}
{
\setlength{\tabcolsep}{5pt}
\begin{tabular}{lrrrrrrrrrrr}
\toprule
\multicolumn{1}{c}{}        & \multicolumn{3}{c}{$\G = (\V, \A)$}                                                    & \multicolumn{1}{c}{} & \multicolumn{3}{c}{Node degree}                                                   & \multicolumn{1}{c}{} & \multicolumn{3}{c}{\od\ pair distance (km)}                                                 \\ \cmidrule{2-4} \cmidrule{6-8} \cmidrule{10-12} 
Network & \multicolumn{1}{c}{$|\V|$} & \multicolumn{1}{c}{$|\A|$} & \multicolumn{1}{c}{$|\odpairs|$} & \multicolumn{1}{c}{} & \multicolumn{1}{c}{Min} & \multicolumn{1}{c}{Mean} & \multicolumn{1}{c}{Max} & \multicolumn{1}{c}{} & \multicolumn{1}{c}{Min} & \multicolumn{1}{c}{Mean} & \multicolumn{1}{c}{Max} \\
\midrule
\textsc{CA}              & 390                     & 24,684                   & 1,167                     &                      & 7                       & 63.29                    & 127                     &                      & 30.1                    & 153.4                    & 463.5                   \\
\textsc{EU}                   & 2,197                    & 789,448                  & 7,605                     &                      & 12                      & 359.33                   & 704                     &                      & 7.27                   & 551.95                      & 2,940.49                  \\
\bottomrule
\end{tabular}
}
{}
\end{table}

\begin{table}[h!]
\centering
\caption{Size statistics of the origin-destination subgraphs\label{tab:subgraphs}. The table lists the number of nodes $|\V_q|$ and arcs $|\A_q|$ of the subgraphs $\G_q = (\V_q, \A_q)$ for varying maximum range $\rangemax$ and deviation parameter $\lambda$.}
{
\setlength{\tabcolsep}{5pt}
\begin{tabular}{rrrrrrrrrrr}
\toprule
            &     &    &            & \multicolumn{3}{c}{$|\V_q|$} & \multicolumn{1}{c}{} & \multicolumn{3}{c}{$|\A_q|$} \\ \cmidrule(lr){5-7} \cmidrule(lr){9-11} 
Network    & $\rangemax$    & $\lambda$(\%) & \begin{tabular}[c]{@{}c@{}}Time-feasible\\ \od\ pairs\end{tabular} & Min     & Mean      & Max    &                      & Min   & Mean      & Max      \\ \midrule
\textsc{CA} & 100km & 0.1     &   1,167     & 3       & 17.1      & 50     &                      & 12    & 263.1     & 1051     \\
            &     & 5.0     &   1,167      & 3       & 31.9      & 135    &                      & 12    & 1,050.6   & 7,436    \\
            &     & 10.0    &  1,167       & 3       & 44.5      & 179    &                      & 12    & 1,986.9   & 12,288   \\
            &     & 15.0    &  1,167       & 3       & 56.2      & 216    &                      & 12    & 3,007.7   & 16,397   \\
            & 200km & 0.1   &   1,167       & 3       & 17.1      & 50     &                      & 12    & 363.1     & 1,799    \\
            &     & 5.0    &  1,167        & 3       & 31.9      & 135    &                      & 12    & 1,533.8   & 13,590   \\
            &     & 10.0  &  1,167        & 3       & 44.5      & 179    &                      & 12    & 3,015.5   & 23,705   \\
            &     & 15.0  &  1,167         & 3       & 56.2     & 216    &                      & 12    & 4,780.2   & 33,706   \\ \midrule
\textsc{EU} & 650km & 10.0  &  6,761        & 1       & 8.64      & 129    &                      & 2     & 142.84     & 7,068 \\
            &     & 15.0    &  7,381        & 1      & 27.29      & 712    &                       & 2    & 1,828.23     & 199,444\\
            &     & 20.0    &  7,467        & 1      & 50.27      & 1,102  &                       & 2    & 6,040.76     & 408,466\\
            &     & 25.0    &  7,510        & 1      & 72.02      & 1,309  &                       & 2    & 11,222.59    & 513,184 \\
            \bottomrule
\end{tabular}
}
\end{table}

\subsection{Results}

In this section we evaluate our algorithms by comparing solution quality (objective value, optimality gap) and runtime across a number of problem instances.
Additionally, we report the average \emph{utilization} of refueling stations, i.e.\ the total demand flow it services divided by its capacity.
The utilization indicates the restrictiveness of the available capacity and how efficiently it is used in the solution.
Note that we provide plots of all the results in Appendix~\ref{app:plots-ca}.

Moreover, in Section~\ref{sec:int-sep-results} we evaluate our improved integer separation approach for the cut formulation, which we compare against \citep{Arslan2019} on non-capacitated problem instances.

\paragraph{California road network}

Similar to \cite{Arslan2019}, we test a range of deviation tolerances $\lambda = 0.1\%$, $5.0\%$, $10.0\%$, and $15.0\%$ and ranges $\rangemax = 100$km and $200$km for all \od\ pairs. 
For capacity constraints, we choose a range of $\kappa_v$ from $25$ to $250$ in increments of $25$.
In total, we run our solvers on 80 instances of the California road network, each run with a time limit of 10 hours.

The full results are listed in Table~\ref{tab:fullca100} for $\rangemax$ = 100km and Table~\ref{tab:fullca200} for $\rangemax$ = 200km.
For $\lambda = 5.0\%$ and $\rangemax = 100$km we show a visual comparison in Figure~\ref{fig:results_ca}.
Across all instances 79 of them had feasible solutions, among which \eqref{eq:path} solved all 79 to within a $10\%$ optimality gap and \eqref{eq:cut} solved 50.

When the capacity constraints are less restrictive, i.e.\ for higher values of $\kappa$, both formulations find the optimal solution. 
Furthermore, in this case solving the cut formulation is faster and requires exploration of fewer branch-and-bound nodes.
To be precise, across all 40 instances with $\kappa \geq 150$, the average runtime for \eqref{eq:cut} is $23.84\%$ less than for \eqref{eq:path}.

In the more restrictive case, the problem appears to be much harder as we hit the time limit for both formulations.
With the path formulation we are able to find better solutions (fewer stations to accommodate the demand) with lower optimality gaps.
In fact, across all instances where $\kappa_v < 150$, the mean optimality gap is $1.80\%$ for \eqref{eq:path} and $33.65\%$ for \eqref{eq:cut}.
In 31 of those instances \eqref{eq:path} gives a better solution and in only 1 instance the solution obtained from \eqref{eq:cut} is better.
Across all 32 instances where the solutions differ, the solutions from \eqref{eq:path} have $16.52\%$ fewer stations on average.

\begin{figure*}[!ht]
\centering
{
\begin{minipage}{.93\linewidth}
\subfloat[Optimality gap]{\includegraphics[width=.49\linewidth]{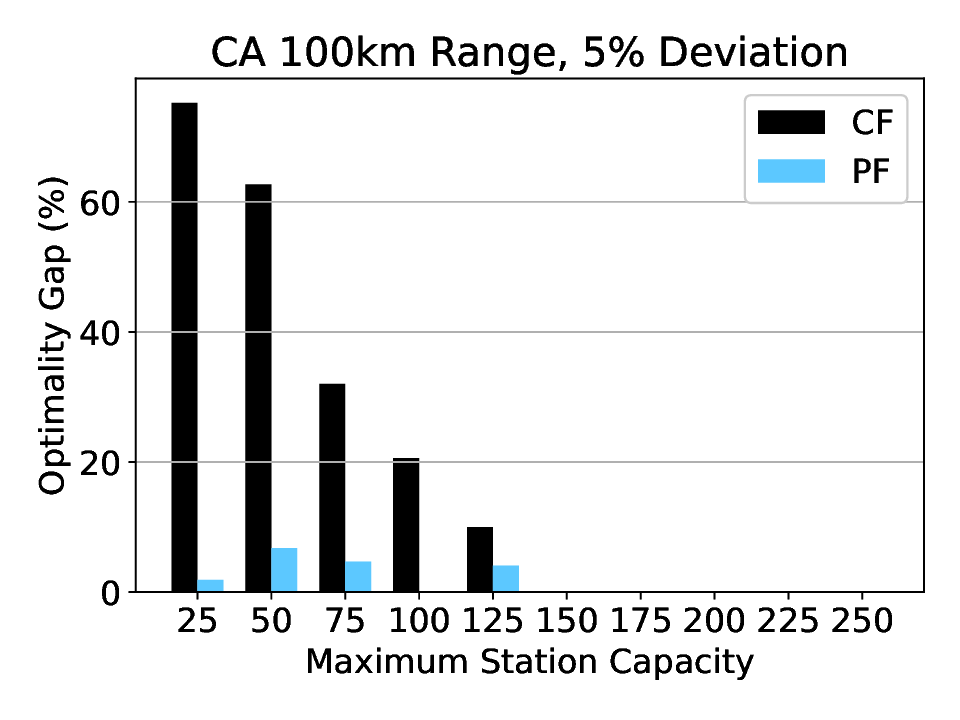} \label{fig:gap}}
\subfloat[Solution time]{\includegraphics[width=.49\linewidth]{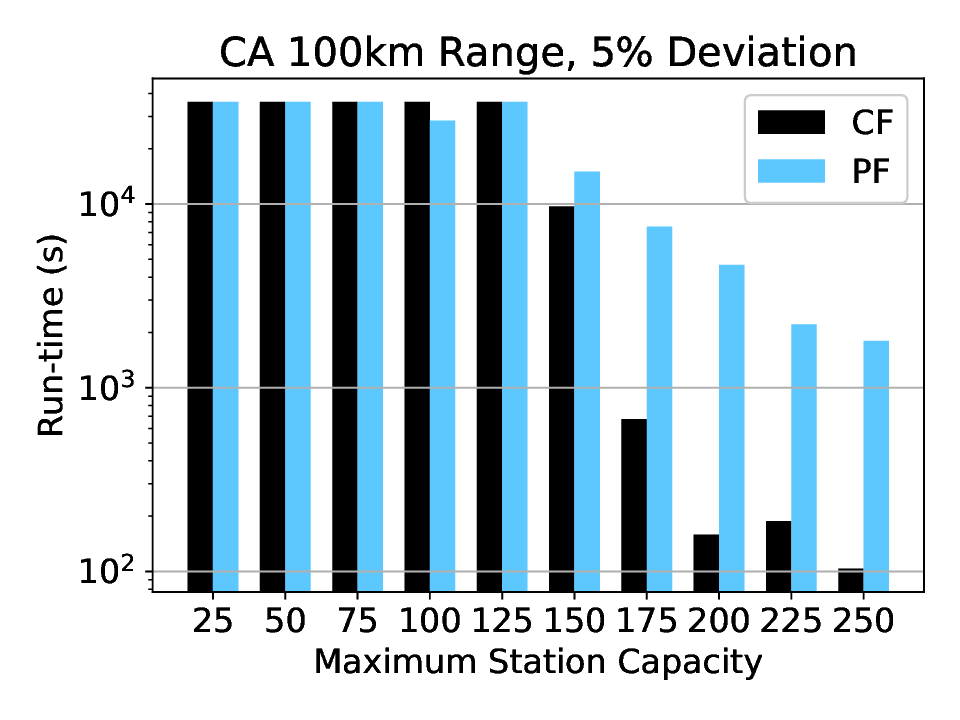}\label{fig:time}}
\newline
\subfloat[Utilization]{\includegraphics[width=.49\linewidth]{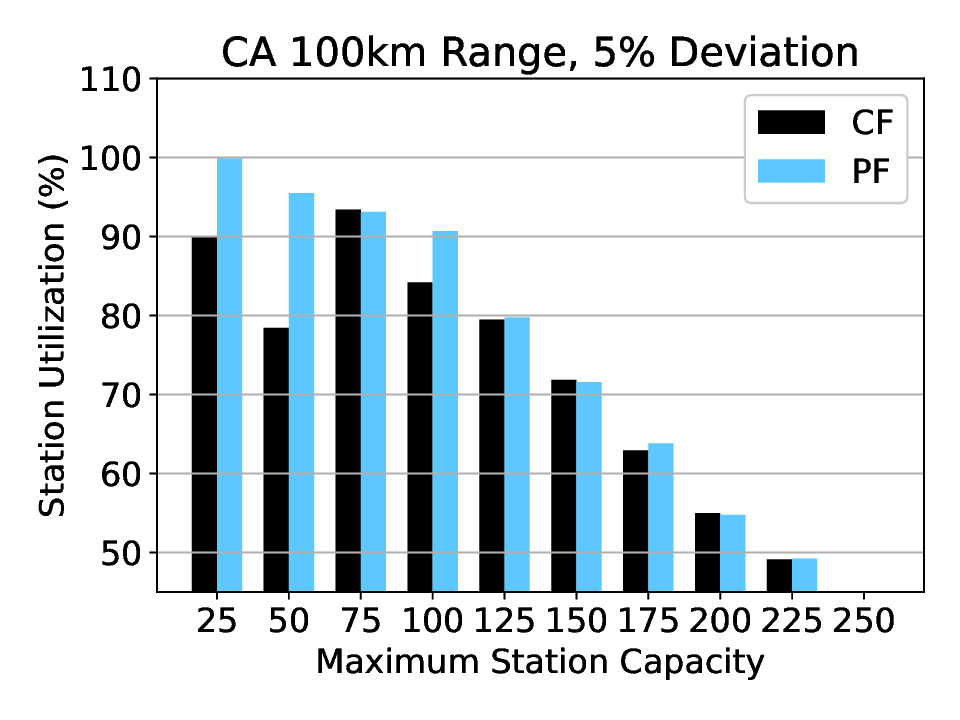} \label{fig:utilization}}
\subfloat[Objective Value]{\includegraphics[width=.49\linewidth]{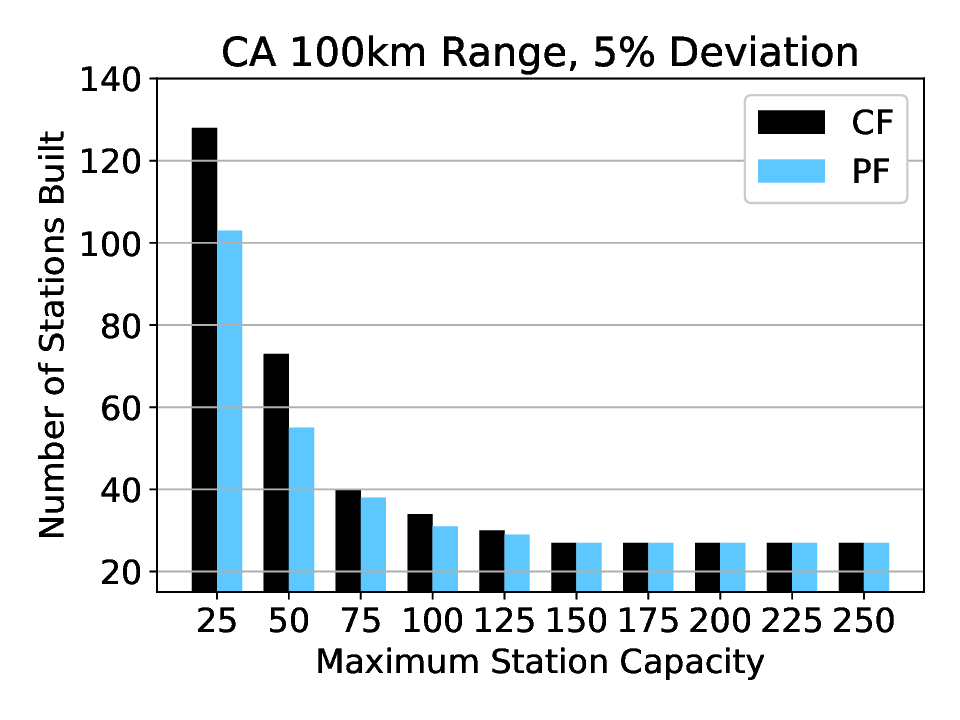}\label{fig:objval}}
\end{minipage}
}
\caption{Results for the CA road network\label{fig:results_ca}. The plot shows results with deviation tolerance $\lambda = 5\%$ and vehicle range $\rangemax = 100$km for \eqref{eq:cut} and \eqref{eq:path}.}
\end{figure*}

\paragraph{Europe road network}

\begin{figure*}[!ht]
\centering
{
\begin{minipage}{.93\linewidth}
\subfloat[Optimality gap]{\includegraphics[width=.49\linewidth]{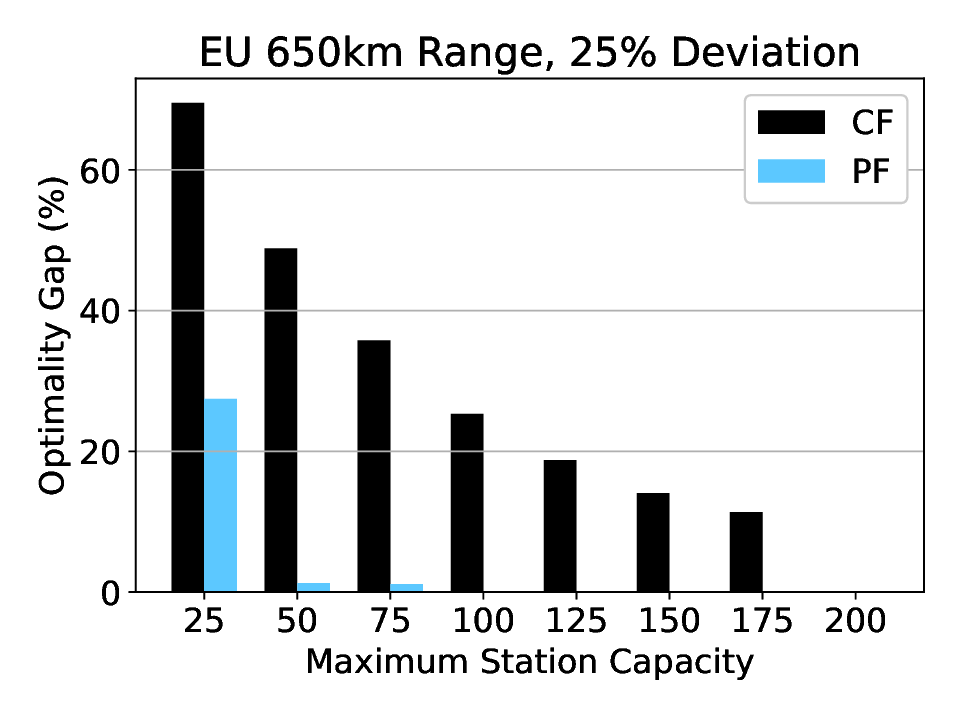} \label{fig:eugap}}
\subfloat[Solution time]{\includegraphics[width=.49\linewidth]{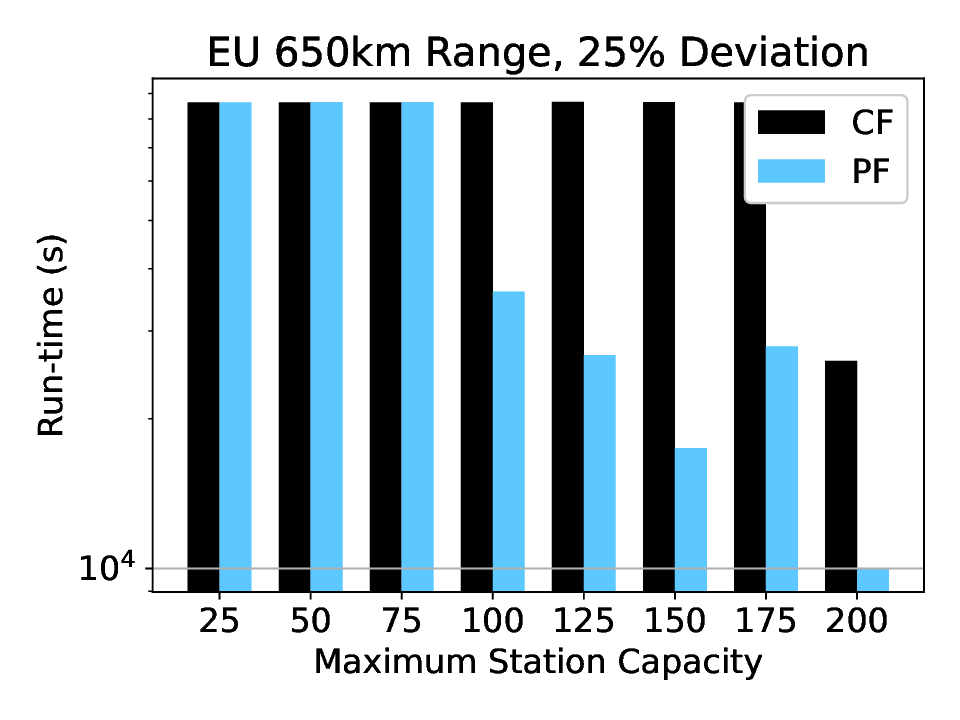}\label{fig:eutime}}
\newline
\subfloat[Utilization]{\includegraphics[width=.49\linewidth]{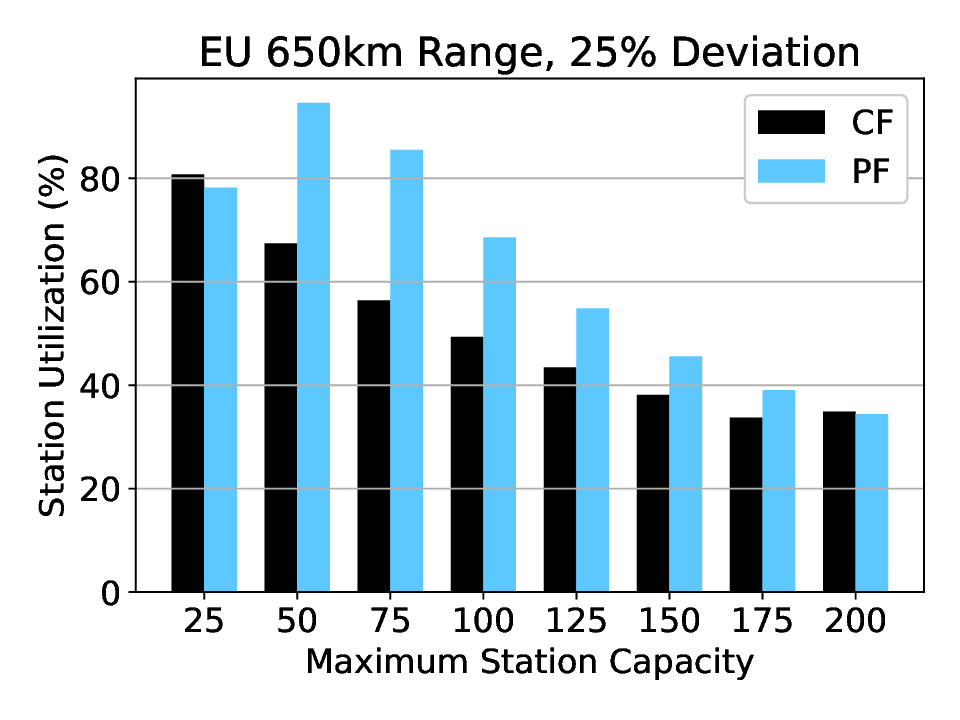} \label{fig:euutilization}}
\subfloat[Objective Value]{\includegraphics[width=.49\linewidth]{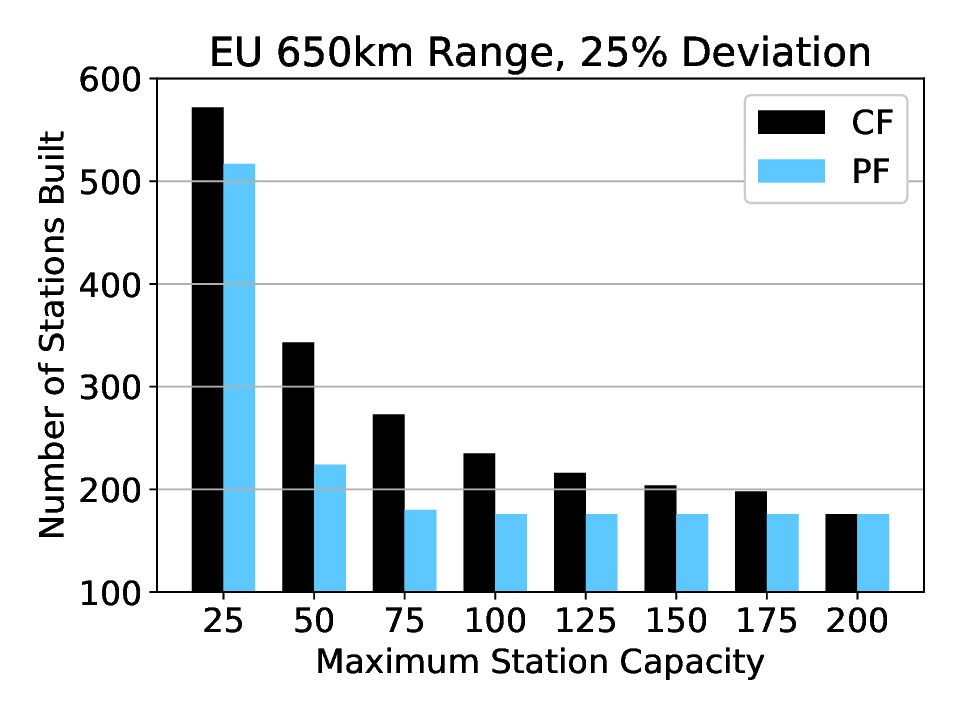}\label{fig:euobjval}}
\end{minipage}
}
\caption{Results for the EU road network\label{fig:euresults}. The plot shows results with deviation tolerance $\lambda = 25\%$ and vehicle range $\rangemax = 650$km for \eqref{eq:cut} and \eqref{eq:path}.}
\end{figure*}

For the larger Europe network instances, we assume a maximum range of $\rangemax =$ 650km.
We test deviation tolerances of $10\%$, $15\%$, $20\%$, and $25\%$, and a range of station capacities from $25$ to $200$ in increments of $25$, which amounts to a total of 32 instances.
We set the time limit of our solvers to 24 hours.

The full results are found in Table~\ref{tab:fulleu} and a plot for the case $\lambda = 25\%$ is shown in Figure~\ref{fig:euresults}.
Of the 32 instances we tested, 29 have a feasible solution.
On the one hand, \eqref{eq:path} solves 21 instances to optimality, 7 with a relative optimality gap within $2\%$, and 1 with a gap of over $27\%$.
On the other hand, \eqref{eq:cut} solves 10 instances to optimality, 5 with a relative gap within $2\%$, 14 with gaps greater than $10\%$, and fails to find any feasible solution for one instance.
Across all the EU road network instances, \eqref{eq:path} finds better solutions than \eqref{eq:cut}.
On average across the 14 instances where the objective values differ, the solutions obtained with \eqref{eq:path} have $33.40\%$ fewer stations than the solutions returned by \eqref{eq:cut}.

For all instances with restrictive capacity constraints ($\kappa < 125$), the mean optimality gap of \eqref{eq:cut} is $23.81\%$ while the mean optimality gap of \eqref{eq:path} is $2.68\%$.
Unlike for the California road network, \eqref{eq:cut} does not provide significant runtime reductions compared to \eqref{eq:path} in the presence of less restrictive capacity constraints. 
Only in 4 of 32 instances our solver for \eqref{eq:cut} terminates before the one for \eqref{eq:path}, and all four instances have the smallest deviation tolerance $\lambda = 10.0\%$.
This suggests that \eqref{eq:path} may be suited for solving large-scale instances of the \rslprc.

\begin{table}[]
\centering
\caption
{Results for the CA road network with 100km range\label{tab:fullca100}. The table lists results for  \eqref{eq:cut}/\eqref{eq:path} with vehicle range $\rangemax = 100$km. We report runtime, objective value, number of explored branch-and-bound nodes, optimality gap and average station utilization for different deviation tolerances $\lambda$ and station capacities $\kappa$. (--) denotes instances for which no feasible solution exists.}
{
\begin{tabular}{lllllll}
\toprule
$\lambda$             & $\kappa$ & Time(s)    & Obj value & \#Nodes   & Gap(\%) & Utilization(\%)  \\
\midrule
0.1\% & 25 & \textbf{72}/36000 & ---/--- & 0/2691 & ---/--- & ---/--- \\
& 50 & \textbf{36000}/\textbf{36000} & 58/\textbf{54} & 7252/1059 & 42.27/\textbf{2.0} & 93.55/96.00 \\
& 75 & \textbf{36000}/\textbf{36000} & 42/\textbf{40} & 13456/1484 & 26.06/\textbf{4.0} & 88.79/91.60 \\
& 100 & \textbf{36000}/\textbf{36000} & \textbf{34}/35 & 9207/1051 & 8.6/\textbf{8.0} & 83.29/80.17 \\
& 125 & \textbf{631}/7678 & \textbf{31}/\textbf{31} & 216/72 & \textbf{0.0}/\textbf{0.0} & 74.27/73.75 \\
& 150 & \textbf{111}/12239 & \textbf{30}/\textbf{30} & 44/194 & \textbf{0.0}/\textbf{0.0} & 64.96/64.60 \\
& 175 & \textbf{57}/4163 & \textbf{30}/\textbf{30} & 33/49 & \textbf{0.0}/\textbf{0.0} & 55.68/55.98 \\
& 200 & \textbf{25}/1645 & \textbf{29}/\textbf{29} & 8/12 & \textbf{0.0}/\textbf{0.0} & 50.03/50.29 \\
& 225 & \textbf{19}/7520 & \textbf{29}/\textbf{29} & 8/100 & \textbf{0.0}/\textbf{0.0} & 44.69/45.72 \\
& 250 & \textbf{12}/2071 & \textbf{29}/\textbf{29} & 7/12 & \textbf{0.0}/\textbf{0.0} & 39.88/40.51 \\
\midrule
5.0\% & 25 & \textbf{36000}/\textbf{36003} & 128/\textbf{103} & 1461/1829 & 75.21/\textbf{2.0} & 90.03/99.88 \\
& 50 & \textbf{36002}/\textbf{36000} & 73/\textbf{55} & 1757/948 & 62.7/\textbf{7.0} & 78.43/95.49 \\
& 75 & \textbf{36000}/\textbf{36000} & 40/\textbf{38} & 1857/566 & 32.0/\textbf{5.0} & 93.40/93.12 \\
& 100 & 36002/\textbf{28412} & 34/\textbf{31} & 1596/347 & 20.59/\textbf{0.0} & 84.21/90.68 \\
& 125 & \textbf{36000}/\textbf{36000} & 30/\textbf{29} & 1848/583 & 10.0/\textbf{4.0} & 79.49/79.78 \\
& 150 & \textbf{9721}/15032 & \textbf{27}/\textbf{27} & 529/134 & \textbf{0.0}/\textbf{0.0} & 71.85/71.58 \\
& 175 & \textbf{674}/7531 & \textbf{27}/\textbf{27} & 39/53 & \textbf{0.0}/\textbf{0.0} & 62.92/63.81 \\
& 200 & \textbf{159}/4669 & \textbf{27}/\textbf{27} & 16/28 & \textbf{0.0}/\textbf{0.0} & 55.00/54.78 \\
& 225 & \textbf{188}/2214 & \textbf{27}/\textbf{27} & 15/24 & \textbf{0.0}/\textbf{0.0} & 49.15/49.23 \\
& 250 & \textbf{103}/1798 & \textbf{27}/\textbf{27} & 8/37 & \textbf{0.0}/\textbf{0.0} & 44.74/43.82 \\
\midrule
10.0\% & 25 & 36000/\textbf{1,244} & 134/\textbf{101} & 742/1,311 & 79.95/\textbf{0.0} & 88.48/99.96 \\
& 50 & \textbf{36000}/\textbf{36000} & 73/\textbf{55} & 704/837 & 65.09/\textbf{7.0} & 80.82/93.02 \\
& 75 & \textbf{36000}/\textbf{36000} & 49/\textbf{37} & 517/901 & 47.81/\textbf{4.0} & 78.07/94.10 \\
& 100 & \textbf{36000}/\textbf{36000} & 41/\textbf{31} & 737/687 & 37.68/\textbf{7.0} & 71.12/89.61 \\
& 125 & 36000/\textbf{9606} & 31/\textbf{27} & 633/53 & 17.66/\textbf{0.0} & 74.04/85.16 \\
& 150 & \textbf{23902}/36000 & \textbf{27}/\textbf{27} & 1121/359 & \textbf{0.0}/7.0 & 71.19/74.35 \\
& 175 & \textbf{1955}/6071 & \textbf{26}/\textbf{26} & 42/21 & \textbf{0.0}/\textbf{0.0} & 65.54/65.78 \\
& 200 & \textbf{789}/3649 & \textbf{26}/\textbf{26} & 38/14 & \textbf{0.0}/\textbf{0.0} & 57.61/58.71 \\
& 225 & \textbf{668}/5237 & \textbf{26}/\textbf{26} & 32/15 & \textbf{0.0}/\textbf{0.0} & 52.65/50.91 \\
& 250 & \textbf{781}/3201 & \textbf{26}/\textbf{26} & 38/20 & \textbf{0.0}/\textbf{0.0} & 47.11/45.65 \\
\midrule
15.0\% & 25 & \textbf{36000}/\textbf{36000} & 133/\textbf{102} & 559/1917 & 81.64/\textbf{1.0} & 90.08/99.53 \\
& 50 & \textbf{36000}/\textbf{36000} & 72/\textbf{56} & 410/609 & 67.36/\textbf{10.0} & 82.53/93.50 \\
& 75 & 36000/\textbf{11496} & 52/\textbf{36} & 257/1594 & 54.76/\textbf{0.0} & 75.23/96.81 \\
& 100 & \textbf{36000}/\textbf{36000} & 41/\textbf{29} & 392/438 & 42.68/\textbf{4.0} & 72.07/94.07 \\
& 125 & 36000/\textbf{17683} & 34/\textbf{26} & 317/76 & 30.88/\textbf{0.0} & 67.69/86.52 \\
& 150 & \textbf{36000}/\textbf{36000} & 32/\textbf{25} & 340/69 & 26.56/\textbf{6.0} & 59.33/78.24 \\
& 175 & 36000/\textbf{25115} & 25/\textbf{24} & 218/47 & 4.0/\textbf{0.0} & 68.21/71.31 \\
& 200 & \textbf{2291}/5309 & \textbf{24}/\textbf{24} & 61/25 & \textbf{0.0}/\textbf{0.0} & 63.04/62.52 \\
& 225 & \textbf{1211}/6011 & \textbf{24}/\textbf{24} & 47/17 & \textbf{0.0}/\textbf{0.0} & 56.91/56.43 \\
& 250 & \textbf{3882}/4527 & \textbf{24}/\textbf{24} & 67/16 & \textbf{0.0}/\textbf{0.0} & 51.07/50.28 \\
\bottomrule
\end{tabular}
}
\end{table}

\begin{table}[]
\centering
\caption
{Results for the CA road network with 200km range\label{tab:fullca200}. The table lists results for  \eqref{eq:cut}/\eqref{eq:path} with vehicle range $\rangemax = 200$km. We report runtime, objective value, number of explored branch-and-bound nodes, optimality gap and average station utilization for different deviation tolerances $\lambda$ and station capacities $\kappa$.}
{
\begin{tabular}{lllllll}
\toprule
$\lambda$             & $\kappa$ & Time(s)    & Obj value & \#Nodes   & Gap(\%)   & Utilization(\%)\\
\midrule
0.1\% & 25 & 36003/\textbf{331} & 65/\textbf{62} & 3484/197 & 63.72/\textbf{0.0} & 99.26/100.00 \\
& 50 & 36002/\textbf{28261} & 34/\textbf{32} & 3694/1846 & 33.57/\textbf{0.0} & 97.59/98.94 \\
& 75 & 36002/\textbf{9732} & 26/\textbf{25} & 3754/1320 & 10.26/\textbf{0.0} & 88.05/88.91 \\
& 100 & \textbf{763}/826 & \textbf{23}/\textbf{23} & 97/23 & \textbf{0.0}/\textbf{0.0} & 78.87/76.52 \\
& 125 & \textbf{297}/2362 & \textbf{23}/\textbf{23} & 46/95 & \textbf{0.0}/\textbf{0.0} & 63.69/61.18 \\
& 150 & \textbf{330}/1197 & \textbf{23}/\textbf{23} & 45/46 & \textbf{0.0}/\textbf{0.0} & 52.99/51.21 \\
& 175 & \textbf{501}/1833 & \textbf{23}/\textbf{23} & 65/66 & \textbf{0.0}/\textbf{0.0} & 44.87/44.17 \\
& 200 & \textbf{225}/1743 & \textbf{23}/\textbf{23} & 29/112 & \textbf{0.0}/\textbf{0.0} & 37.65/38.22 \\
& 225 & \textbf{282}/4412 & \textbf{23}/\textbf{23} & 39/913 & \textbf{0.0}/\textbf{0.0} & 33.58/35.09 \\
& 250 & \textbf{275}/1751 & \textbf{23}/\textbf{23} & 35/156 & \textbf{0.0}/\textbf{0.0} & 29.95/31.32 \\
\midrule
5.0\% & 25 & 36002/\textbf{1042} & 64/\textbf{62} & 1856/604 & 67.12/\textbf{0.0} & 99.75/99.61 \\
& 50 & 36003/\textbf{14402} & 34/\textbf{32} & 2139/2103 & 38.24/\textbf{0.0} & 98.76/99.12 \\
& 75 & 36000/\textbf{21274} & 25/\textbf{24} & 2318/3187 & 13.06/\textbf{0.0} & 91.31/94.06 \\
& 100 & \textbf{1715}/1839 & \textbf{22}/\textbf{22} & 110/42 & \textbf{0.0}/\textbf{0.0} & 81.09/79.45 \\
& 125 & \textbf{636}/1233 & \textbf{22}/\textbf{22} & 54/81 & \textbf{0.0}/\textbf{0.0} & 66.04/64.04 \\
& 150 & \textbf{318}/911 & \textbf{22}/\textbf{22} & 39/67 & \textbf{0.0}/\textbf{0.0} & 55.09/54.03 \\
& 175 & \textbf{322}/388 & \textbf{22}/\textbf{22} & 30/25 & \textbf{0.0}/\textbf{0.0} & 46.81/46.08 \\
& 200 & \textbf{266}/1131 & \textbf{22}/\textbf{22} & 24/94 & \textbf{0.0}/\textbf{0.0} & 39.34/40.75 \\
& 225 & \textbf{273}/594 & \textbf{22}/\textbf{22} & 26/66 & \textbf{0.0}/\textbf{0.0} & 36.04/36.20 \\
& 250 & \textbf{211}/727 & \textbf{22}/\textbf{22} & 19/54 & \textbf{0.0}/\textbf{0.0} & 32.93/32.71 \\
\midrule
10.0\% & 25 & 36000/\textbf{307} & 77/\textbf{62} & 1367/161 & 73.84/\textbf{0.0} & 90.39/99.55 \\
& 50 & \textbf{36000}/\textbf{36000} & 35/\textbf{32} & 1383/3246 & 42.86/\textbf{3.0} & 93.49/97.31 \\
& 75 & 36000/\textbf{4502} & 25/\textbf{23} & 1139/1382 & 20.0/\textbf{0.0} & 94.40/94.96 \\
& 100 & 2438/\textbf{1884} & \textbf{21}/\textbf{21} & 91/89 & \textbf{0.0}/\textbf{0.0} & 85.90/84.48 \\
& 125 & \textbf{961}/969 & \textbf{21}/\textbf{21} & 42/48 & \textbf{0.0}/\textbf{0.0} & 69.60/66.74 \\
& 150 & \textbf{811}/1396 & \textbf{21}/\textbf{21} & 49/223 & \textbf{0.0}/\textbf{0.0} & 59.59/56.70 \\
& 175 & \textbf{593}/808 & \textbf{21}/\textbf{21} & 30/38 & \textbf{0.0}/\textbf{0.0} & 50.31/48.19 \\
& 200 & \textbf{419}/1718 & \textbf{21}/\textbf{21} & 33/117 & \textbf{0.0}/\textbf{0.0} & 44.14/41.86 \\
& 225 & \textbf{493}/1149 & \textbf{21}/\textbf{21} & 46/368 & \textbf{0.0}/\textbf{0.0} & 36.53/37.46 \\
& 250 & \textbf{419}/2888 & \textbf{21}/\textbf{21} & 21/244 & \textbf{0.0}/\textbf{0.0} & 32.95/33.75 \\
\midrule
15.0\% & 25 & 36008/\textbf{3250} & 65/\textbf{62} & 1017/715 & 71.57/\textbf{0.0} & 98.52/99.54 \\
& 50 & \textbf{36000}/\textbf{36000} & 34/\textbf{32} & 1004/2271 & 45.59/\textbf{3.0} & 98.18/96.75 \\
& 75 & 36006/\textbf{502} & 24/\textbf{22} & 1148/112 & 22.92/\textbf{0.0} & 94.28/96.42 \\
& 100 & \textbf{1511}/1521 & \textbf{19}/\textbf{19} & 45/30 & \textbf{0.0}/\textbf{0.0} & 91.53/89.84 \\
& 125 & \textbf{906}/1549 & \textbf{19}/\textbf{19} & 27/33 & \textbf{0.0}/\textbf{0.0} & 76.38/76.13 \\
& 150 & \textbf{376}/858 & \textbf{19}/\textbf{19} & 13/17 & \textbf{0.0}/\textbf{0.0} & 65.43/62.14 \\
& 175 & \textbf{333}/917 & \textbf{19}/\textbf{19} & 10/26 & \textbf{0.0}/\textbf{0.0} & 55.43/54.74 \\
& 200 & \textbf{171}/633 & \textbf{19}/\textbf{19} & 5/15 & \textbf{0.0}/\textbf{0.0} & 45.71/46.24 \\
& 225 & \textbf{361}/836 & \textbf{19}/\textbf{19} & 10/39 & \textbf{0.0}/\textbf{0.0} & 40.61/41.87 \\
& 250 & \textbf{121}/776 & \textbf{19}/\textbf{19} & 5/26 & \textbf{0.0}/\textbf{0.0} & 36.91/38.27 \\
\bottomrule
\end{tabular}
}
\end{table}

\begin{table}[]
\centering
\caption
{Results for the EU network\label{tab:fulleu}. The table lists results for \eqref{eq:cut}/\eqref{eq:path} with vehicle range $\rangemax = 650$km. We report runtime, objective value, number of explored branch-and-bound nodes, optimality gap and average station utilization for different deviation tolerances $\lambda$ and station capacities $\kappa$. (--) denotes instances for which no feasible solution exists.}
{
\begin{tabular}{lllllll}
\toprule
$\lambda$             & $\kappa$ & Time(s)    & Obj value & \#Nodes  & Gap(\%) & Utilization(\%)  \\
\midrule
10.0\% & 25  & --/-- & --/-- & --/-- & --/-- & --/-- \\
       & 50  & --/-- & --/-- & --/-- & --/-- & --/-- \\
       & 75  & --/-- & --/-- & --/-- & --/-- & --/-- \\
       & 100 & 218/\textbf{103} & \textbf{346}/\textbf{346} & 17/3 & \textbf{0.0}/\textbf{0.0} & 29.59/29.33\\
       & 125 & \textbf{70}/128 & \textbf{345}/\textbf{345} & 6/1 & \textbf{0.0}/\textbf{0.0} & 23.89/23.50\\
       & 150 & \textbf{55}/173 & \textbf{345}/\textbf{345} & 5/1 & \textbf{0.0}/\textbf{0.0} & 19.94/19.64\\
       & 175 & \textbf{60}/131 & \textbf{345}/\textbf{345} & 5/1 & \textbf{0.0}/\textbf{0.0} & 17.02/16.80\\
       & 200 & \textbf{21}/121 & \textbf{345}/\textbf{345} & 3/1 & \textbf{0.0}/\textbf{0.0} & 14.49/14.71\\
\midrule
15.0\% & 25  & \textbf{87,038}/\textbf{86,400} & $\infty$/\textbf{432} & -/6,543 & $\infty$/\textbf{1.12} & 0.0/95.48 \\
       & 50  & 86,400/\textbf{55,294} & 428/\textbf{273} & 17/2,909 & 40.31/\textbf{0.0}  & 52.28/81.38 \\
       & 75  & 86,400/\textbf{4,890}  & 347/\textbf{259} & 307/138   & 26.22/\textbf{0.0}  & 42.96/58.86 \\
       & 100 & 86,400/\textbf{3,642}  & \textbf{258}/\textbf{258} & 99/198   & 0.65/\textbf{0.0}  & 45.00/44.80\\
       & 125 & 26,615/\textbf{2,417}  & \textbf{258}/\textbf{258} & 143/137   & \textbf{0.0}/\textbf{0.0}  & 36.14/35.71\\
       & 150 & 5,720/\textbf{1,993}  & \textbf{258}/\textbf{258} & 62/140   & \textbf{0.0}/\textbf{0.0}  & 30.27/30.04\\
       & 175 & 8,457/\textbf{2,370}  & \textbf{258}/\textbf{258} & 117/142   & \textbf{0.0}/\textbf{0.0}  & 25.85/25.75\\
       & 200 & 6,696/\textbf{2,039}  & \textbf{258}/\textbf{258} & 68/180   & \textbf{0.0}/\textbf{0.0}  & 22.67/22.48\\
\midrule
20.0\% & 25  & \textbf{86,400}/\textbf{86,400} & 618/\textbf{415} & 3/1,550  & 64.95/\textbf{0.73} & 73.15/97.97 \\
       & 50  & \textbf{86,400}/\textbf{86,400} & 391/\textbf{244} & 16/1,334  & 45.34/\textbf{1.09} & 59.00/89.75 \\
       & 75  & \textbf{86,401}/\textbf{86,400} & 304/\textbf{218} & 15/529    & 29.70/\textbf{0.93} & 50.29/71.94 \\
       & 100 & \textbf{86,400}/\textbf{86,400} & 277/\textbf{217} & 23/1,757  & 22.92/\textbf{0.97} & 41.28/54.87\\
       & 125 & 86,400/\textbf{75,880} & 218/\textbf{217} & 208/2,871  & 1.96/\textbf{0.0} & 44.11/43.57\\
       & 150 & 86,400/\textbf{68,563} & \textbf{217}/\textbf{217} & 115/10,723 & 1.45/\textbf{0.0} & 37.02/36.47\\
       & 175 & 86,400/\textbf{45,764} & \textbf{217}/\textbf{217} & 75/2,126  & 1.51/\textbf{0.0}  & 31.71/31.25\\
       & 200 & 86,400/\textbf{22,231} & \textbf{217}/\textbf{217} & 84/1,151  & 1.28/\textbf{0.0}  & 27.80/27.37\\
\midrule
25.0\% & 25  & \textbf{86,401}/\textbf{86,400} & 572/\textbf{517} & 1/71  & 69.51/\textbf{27.45} & 80.77/78.18 \\
       & 50  & \textbf{86,400}/\textbf{86,400} & 343/\textbf{224} & 5/493 & 48.83/\textbf{1.27}  & 67.43/94.56 \\
       & 75  & \textbf{86,400}/\textbf{86,400} & 273/\textbf{180} & 7/423 & 35.78/\textbf{1.13}  & 56.40/85.51 \\
       & 100 & 86,400/\textbf{35,993} & 235/\textbf{176} & 19/138 & 23.32/\textbf{0.0}   & 49.33/68.59\\
       & 125 & 86,563/\textbf{26,818} & 216/\textbf{176} & 36/444 & 18.75/\textbf{0.0}   & 43.43/54.85\\
       & 150 & 86,456/\textbf{17,447} & 204/\textbf{176} & 116/147 & 14.05/\textbf{0.0}   & 38.14/45.58\\
       & 175 & 86,401/\textbf{27,923} & 198/\textbf{176} & 157/633 & 11.36/\textbf{0.0}   & 33.75/39.07\\
       & 200 & 26,116/\textbf{9,989}  & \textbf{176}/\textbf{176} & 37/56  & \textbf{0.0}/\textbf{0.0}   & 34.91/34.40\\
\bottomrule
\end{tabular}
}
\end{table}

\subsubsection{Improved integer separation for the cut formulation}
\label{sec:int-sep-results}

In this section we compare our improved integer separation method introduced in Section~\ref{sec:int-sep} to the approach proposed by \cite{Arslan2019}.
In the prior work the authors distinguish five variations of their branch-and-cut algorithm for solving the non-capacitated \rslpr, which differ in the separation algorithms they employ.
The version that is fastest in their evaluation only runs separation at integer solutions (refered to as ``B\&C-1'').
Here, we repeat the experiments from \cite{Arslan2019} using the non-capacitated version of \rslpr stated in \eqref{eq:cutinf} and compare our own method to B\&C-1.
Note that we use the same implementation for both methods (i.e.\ no fractional separation) and only swap the integer separation routine.

\begin{align*}
\min \quad & \sum_{v \in \stations} c_v x_v \tag{CF$\infty$} \label{eq:cutinf} \\
\text{s.t.} \quad
& \sum_{v \in \stations} x_v \geq 1 & \forall q \in \odpairs, \; S \in \Gamma_q \label{eq:cut-coverage-inf}\\
& x_v\in \{0, 1\} & \forall v \in \stations %\label{eq:cut-int-inf}
\end{align*}

In Table~\ref{tab:int-sep-results} we report the runtime and the number of calls to the Dijkstra shortest path algorithm on problem instances for both network graphs.
We can see that our separation method significantly reduces the number of Dijkstra calls across all instances and the highest reduction is exhibited for instances with larger subgraphs.
This is because our method determines a small initial time-separator by running Dijkstra twice instead of taking the complement of the current active stations, which gives the most benefit when there are many nodes in the subgraph. 
Similarly, our algorithm runs faster for all instances except the smallest ones, where the total runtime is negligible to begin with.
For the most challenging instances with large subgraphs our algorithm is almost 50\% faster.

From Table~\ref{tab:int-sep-results} it is also apparent that the addition of capacity constraints renders the \rslprc much harder to solver than the non-capacitated \rslpr.
On the same network graph, we can solve \eqref{eq:cutinf} in seconds while solving \eqref{eq:cut} and \eqref{eq:path} can take hours depending on the strictness of the capacity constraints.

\begin{table}[]
\centering
\caption
{Comparison of integer separation routines\label{tab:int-sep-results}. The table lists the total solver runtime and the number of Dijkstra calls for the B\&C-1 method from \cite{Arslan2019} and our integer separation method on various non-capacitated problem instances.}
{
\begin{tabular}{rrrrrrrr}
\toprule
   &                     &            & \multicolumn{2}{c}{Runtime (sec)} &  & \multicolumn{2}{c}{\# Dijkstra calls} \\ \cmidrule(lr){4-5} \cmidrule(lr){7-8} 
Network & $\lambda$ & \afv range & B\&C-1             & Ours               &  & B\&C-1       & Ours                \\
\midrule
CA & 0\%                     & 100km      & \textbf{0.30}      & 0.32               &  & 23,314       & \textbf{19,154}     \\
 &                       & 150km      & \textbf{0.33}               & \textbf{0.33}               &  & 21,022       & \textbf{15,687}     \\
        &                & 200km      & \textbf{0.31}               & \textbf{0.31}               &  & 20,439       & \textbf{14,042}     \\ \midrule
& 10\%                    & 100km      & 1.21               & \textbf{1.06}      &  & 60,153       & \textbf{28,986}     \\
 &                       & 150km      & 1.43               & \textbf{1.30}      &  & 52,023       & \textbf{27,928}     \\
  &                      & 200km      & 1.46               & \textbf{1.45}      &  & 51,960       & \textbf{30,678}     \\ \midrule
& 20\%                    & 100km      & 2.63               & \textbf{1.87}      &  & 82,107       & \textbf{34,768}     \\
  &                      & 150km      & 3.29               & \textbf{2.50}      &  & 77,922       & \textbf{34,229}     \\
 &                       & 200km      & 3.58               & \textbf{3.08}      &  & 77,954       & \textbf{41,903}     \\ \midrule
& 50\%                    & 100km      & 6.31               & \textbf{3.56}      &  & 149,155      & \textbf{51,863}     \\
 &                       & 150km      & 7.11               & \textbf{5.00}      &  & 123,247      & \textbf{47,899}     \\
 &                       & 200km      & 8.10               & \textbf{6.79}      &  & 122,365      & \textbf{62,258}     \\
                        \midrule
EU & 10\%                    & 650km      & 4.63                 & \textbf{4.53}        &  & 106,285       & \textbf{81,143}     \\
& 15\%                    &            & 28.46                & \textbf{20.89}       &  & 254,580       & \textbf{167,917}     \\
& 20\%                    &            & 117.66               & \textbf{63.99}       &  & 452,795       & \textbf{239,382}     \\ 
& 25\%                    &            & 219.07               & \textbf{115.68}      &  & 596,544       & \textbf{303,696}     \\ \bottomrule
\end{tabular}
}
\end{table}
% !TeX root = ../main-arxiv.tex

\section{Conclusion}

In this paper, we presented the capacitated refueling station location problem with routing (\rslprc). 
The solution to this problem locates a minimal cost set of refueling stations such that \afvs can transport all feasible demand across every \od\ pair; no drivers deviate from the shortest path by more than their maximal tolerance; and no refueling station exceeds its maximal capacity.
The inclusion of capacity constraints improves the real-world applicability of the model at the expense of increasing the hardness of an already theoretically NP-hard problem.
We devised two optimization methods to solve the \rslprc and showed that our solvers can still find exact solutions for instances of real-world scope.
However, solving with capacity constraints can increase the solution time by orders of magnitude compared to the uncapacitated version.
For instances with stricter capacity constraints, we show the branch-and-price formulation finds solutions that have $20\%$ smaller optimality gap and require $30\%$ fewer refueling stations than the solutions provided by branch-and-cut.
When the capacity constraints are looser, we show the branch-and-cut formulation finds optimal solutions faster than branch-and-price for smaller and less complex instances.
In addition to our method's ability to handle capacity constraints, we presented a faster integer separation method than the one used in \cite{Arslan2019} and introduced a primal heuristic to find integer solutions faster.

Future work on the capacity aspect of the problem may be necessary to advance the applicability of the model.
For instance, the capacities may not be hard constraints, but could be considered a variable that incurs an additional (potentially non-linear) cost when increased.

\clearpage

%\section*{Acknowledgments}

\bibliographystyle{abbrvnat}
\bibliography{literature}

\clearpage
\appendix

\section{Primal Heuristic} \label{app:heuristic}

\begin{algorithm}
\caption{Set costs and get constrained shortest path (CSP)}\label{alg:getCSP}
\begin{algorithmic}[1]
\Procedure{getCSP}{$q$, $k$, $\bar{z}$}
\State $l_v^q \gets \infty, \; \forall v \in \stations$
\ForAll{$v \in \V_q$}
    \If{$f_q + k_v \leq \kappa_v$} \Comment{$v$ can satisfy demand from \od\ pair $q$.}
        \If{$k_v > 0$} 
            $l_v^q \gets 0$ \Comment{$v$ already built by another \od\ pair.}
        \Else
            \State $l_v^q \gets 1 - \bar{z}_v^q$ \Comment{Cost of $v$ is related to LP relaxation.}
        \EndIf
    \Else
        \State $l_v^q \gets \infty$ \Comment{$v$ cannot satisfy demand from \od\ pair $q$.}
    \EndIf
\EndFor
\State $P_q \gets \textsc{CSP}(\G_q, u_q, l_v^q)$ \Comment{Shortest path with transit time upper bound $u_q$}
\State \Return{$P_q$}
\EndProcedure
\end{algorithmic}
\end{algorithm}

\begin{algorithm}
\caption{Resolve violated capacity constraints}\label{alg:resolve}
\begin{algorithmic}[1]
\Procedure{ResolveInfeasibility}{$P_q$, $k$, $z$, $\textsc{StationQueue}$, $\textsc{O-DQueue}$}
    \ForAll{$v \in P_q \setminus \{s_q, t_q\}$}
        \If{$k_v + f_q \leq \kappa_v$}
            \State $k_v \gets k_v + f_q$ \Comment{Increase demand at $v$ with \od\ pair $q$.}
            \State $z_v^q \gets 1$ \Comment{Assign $v$ to $q$ in integer solution.}
            \State $\textsc{StationQueue}(v).\text{enqueue}(q)$
        \Else
            \While{$k_v > \kappa_v$} \Comment{Reassign \od\ pairs until capacity constraints are feasible}
                \State $q' \gets \textsc{StationQueue}(v).\text{dequeue}()$
                \ForAll{$v \in P_{q'} \setminus \{s_{q'}, t_{q'}\}$}
                    \State $k_v \gets k_v - f_{q'}$
                    \State $z_v^{q'} \gets 0$ \Comment{Reassign $v$ from $q$ in integer solution.}
                \EndFor
                \State $\textsc{O-DQueue}.\text{enqueue}(q')$
            \EndWhile
        \EndIf
    \EndFor
\EndProcedure
\end{algorithmic}
\end{algorithm}

\section{Plots for all results}
\label{app:plots-ca}

% More plots. Do every instance
\clearpage

\begin{figure*}
\centering
{
\begin{minipage}{.8\linewidth}\centering
\subfloat[Optimality gap]{\includegraphics[width=.49\linewidth]{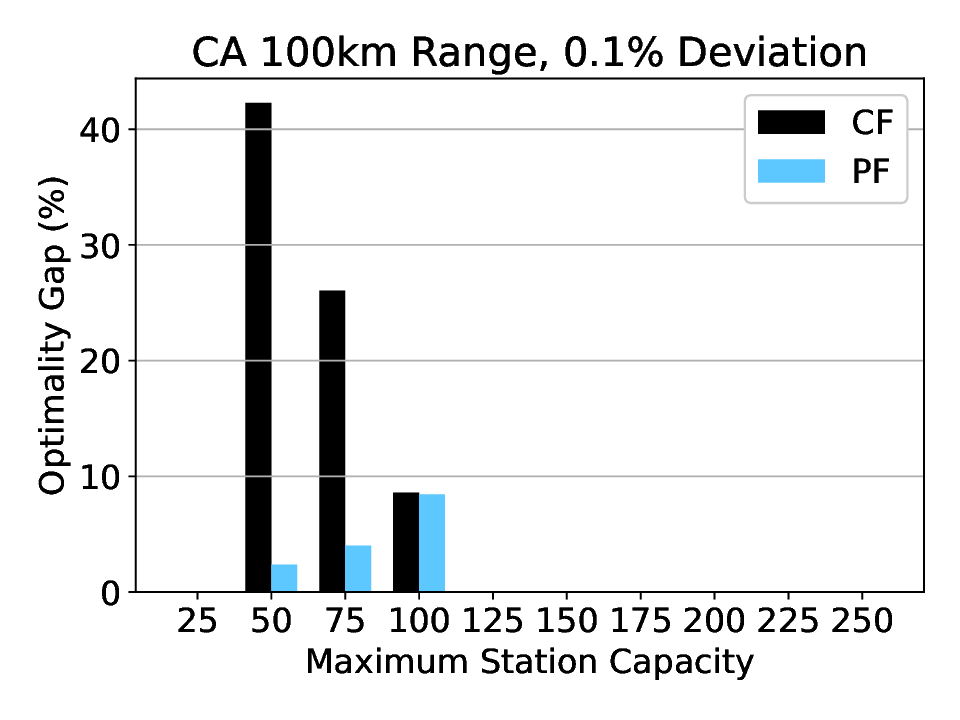} \label{fig:gap-ca_100_0}}
\subfloat[Solution time]{\includegraphics[width=.49\linewidth]{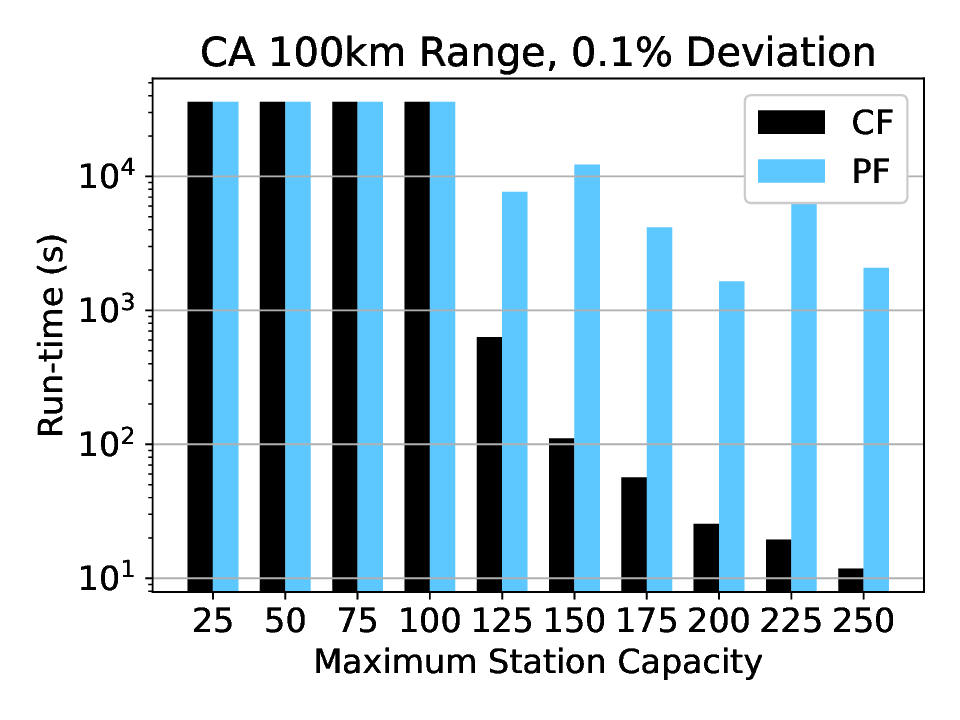}\label{fig:time-ca_100_0}}
\newline
\subfloat[Utilization]{\includegraphics[width=.49\linewidth]{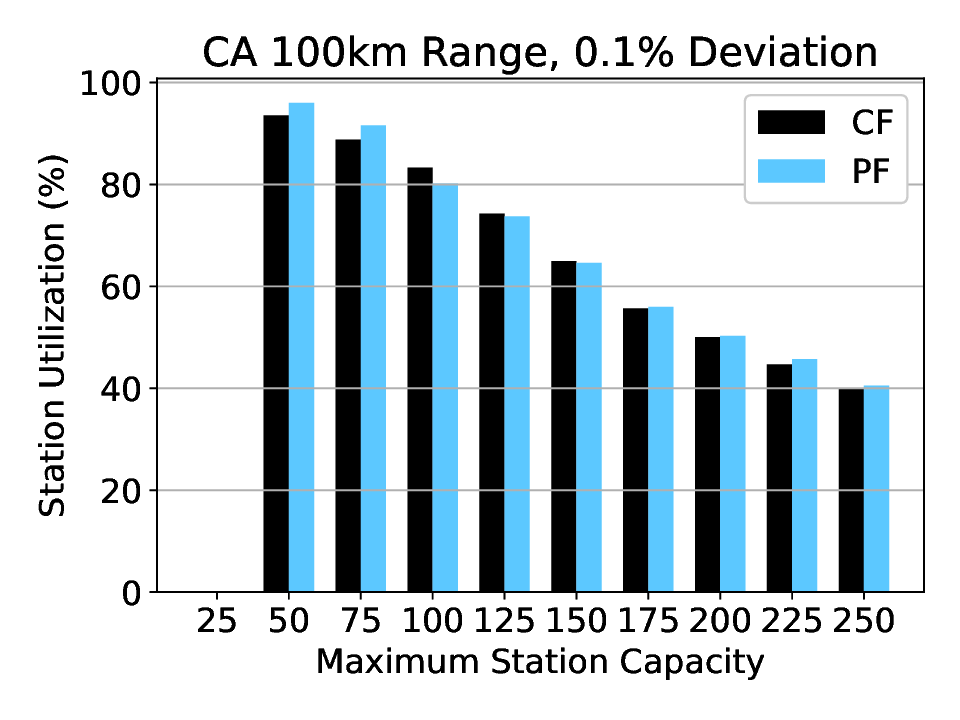} \label{fig:utilization-ca_100_0}}
\subfloat[Objective Value]{\includegraphics[width=.49\linewidth]{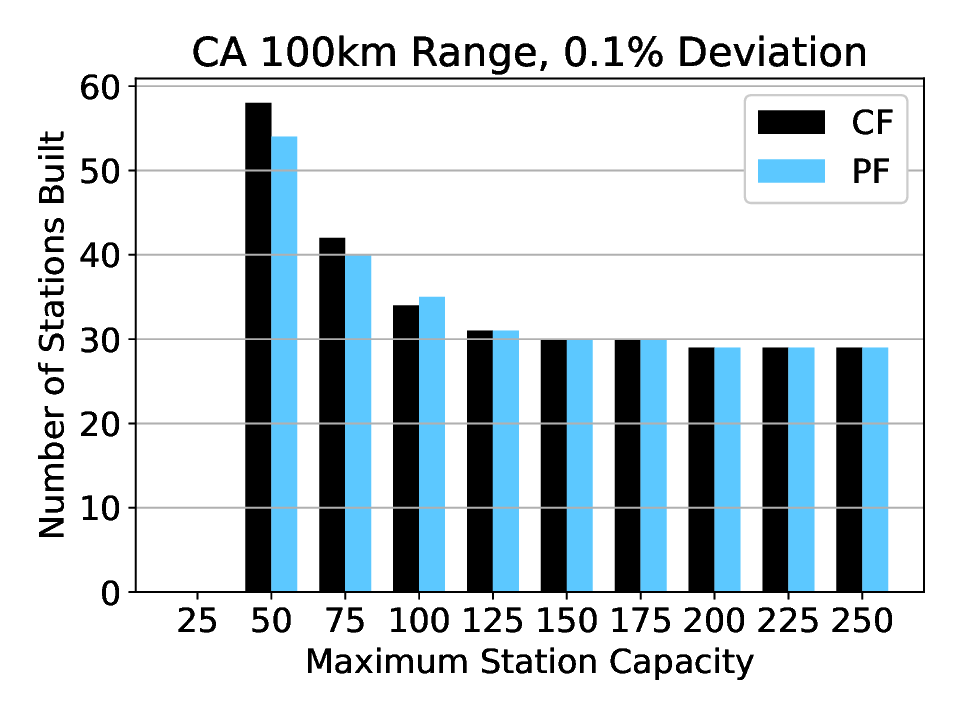}\label{fig:objval-ca_100_0}}
\end{minipage}
}
\caption{Results for the CA road network\label{fig:results_ca_100_0}. The plot shows results with deviation tolerance $\lambda = 0.1\%$ and vehicle range $\rangemax = 100$km for \eqref{eq:cut} and \eqref{eq:path}.}
\end{figure*}

\begin{figure*}
\centering
{
\begin{minipage}{.8\linewidth}\centering
\subfloat[Optimality gap]{\includegraphics[width=.49\linewidth]{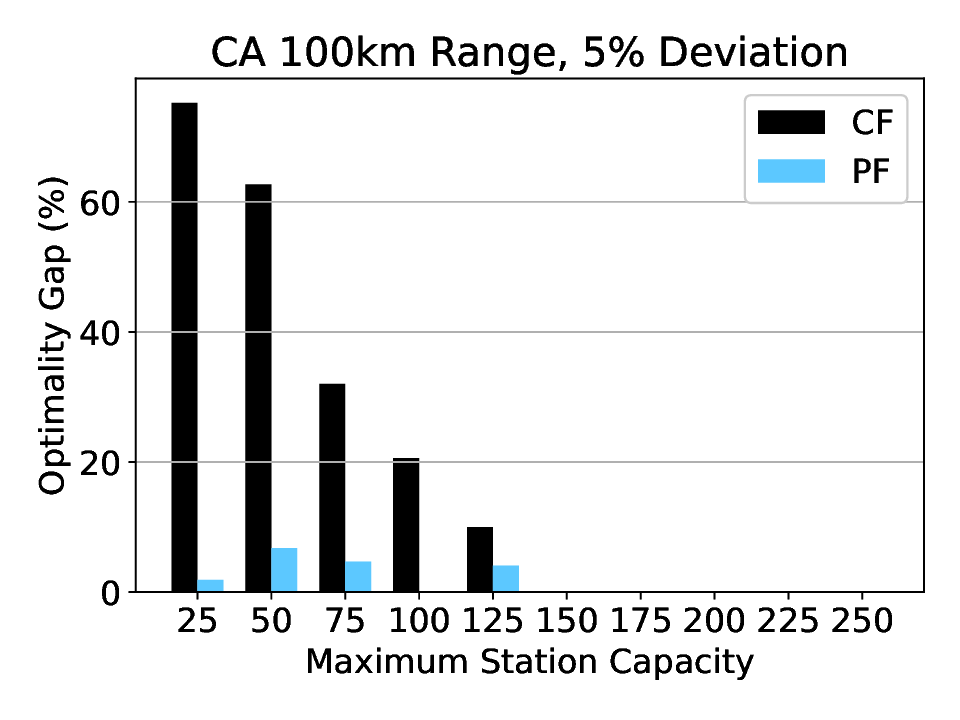} \label{fig:gap-ca_100_5}}
\subfloat[Solution time]{\includegraphics[width=.49\linewidth]{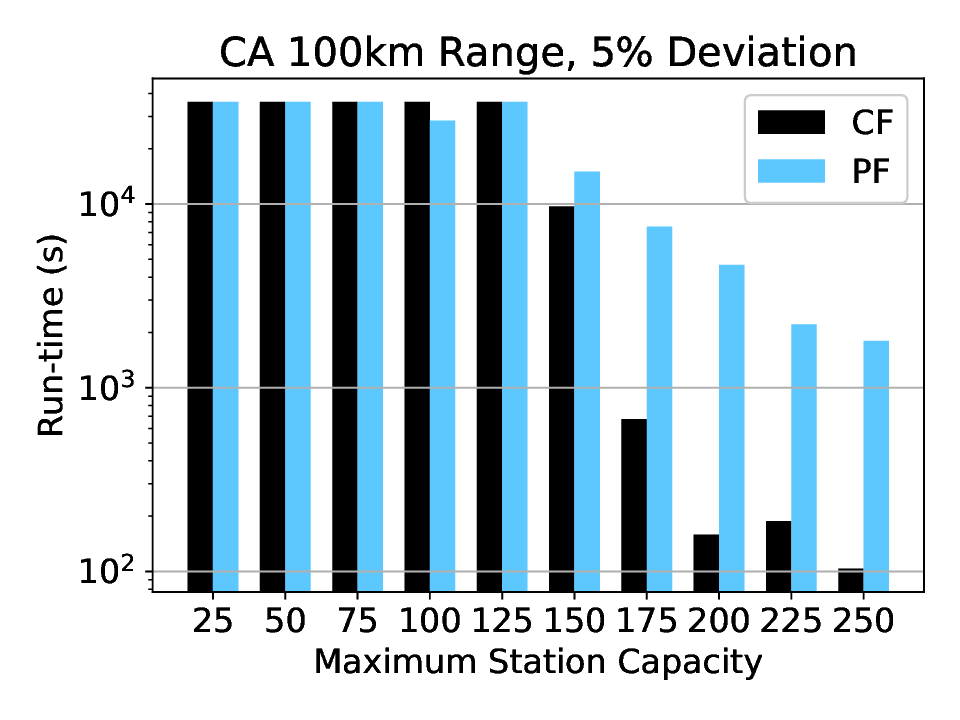}\label{fig:time-ca_100_5}}
\newline
\subfloat[Utilization]{\includegraphics[width=.49\linewidth]{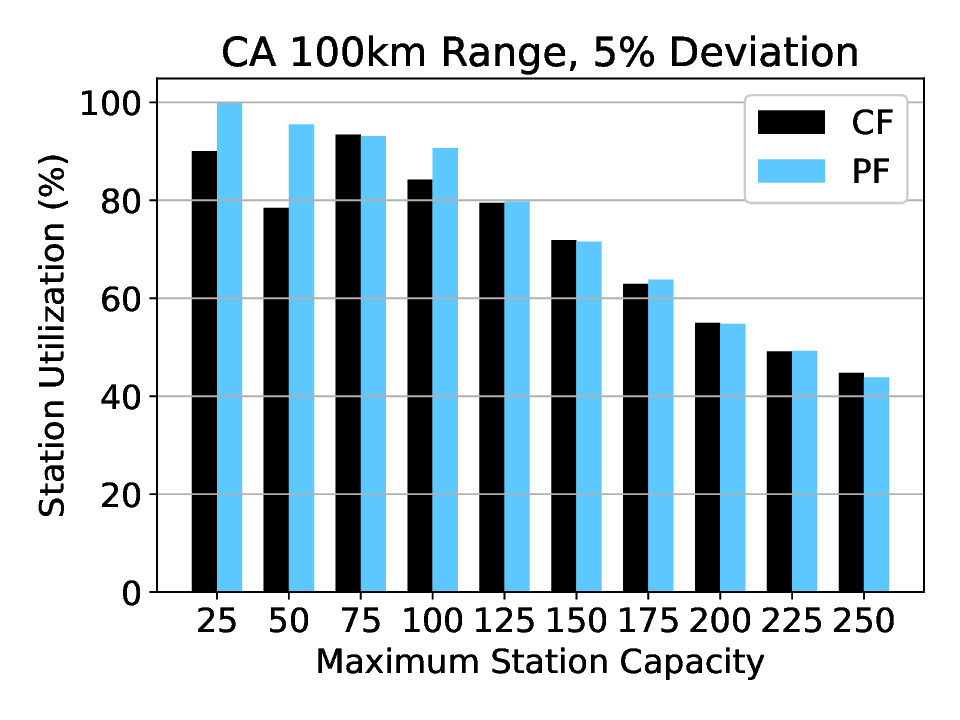} \label{fig:utilization-ca_100_5}}
\subfloat[Objective Value]{\includegraphics[width=.49\linewidth]{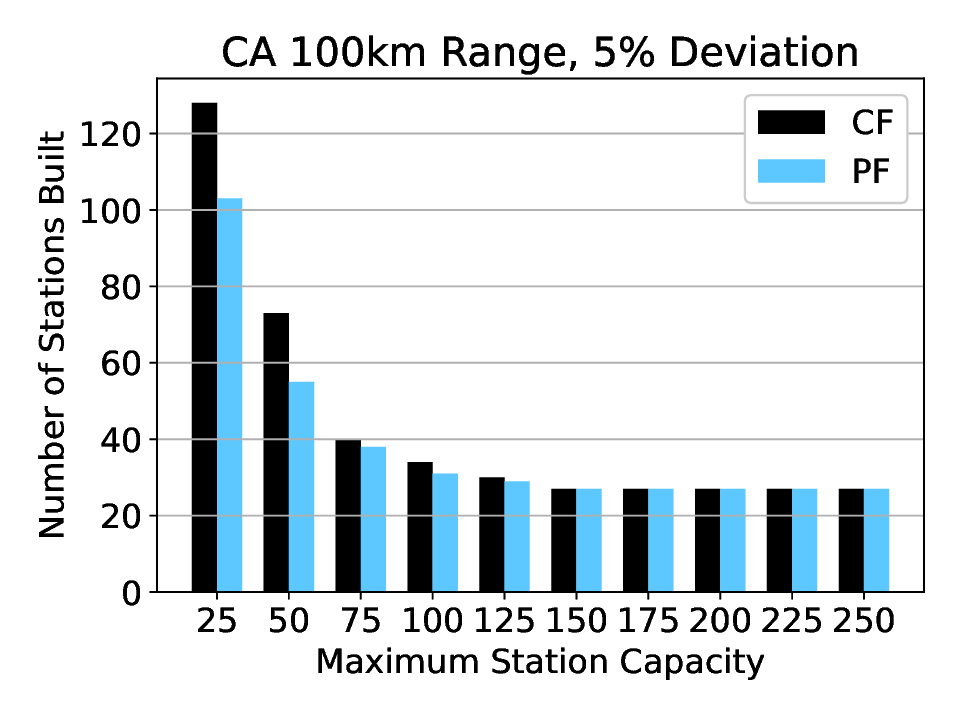}\label{fig:objval-ca_100_5}}
\end{minipage}
}
\caption{Results for the CA road network\label{fig:results_ca_100_5}. The plot shows results with deviation tolerance $\lambda = 5.0\%$ and vehicle range $\rangemax = 100$km for \eqref{eq:cut} and \eqref{eq:path}.}
\end{figure*}

\begin{figure*}
\centering
{
\begin{minipage}{.8\linewidth}\centering
\subfloat[Optimality gap]{\includegraphics[width=.49\linewidth]{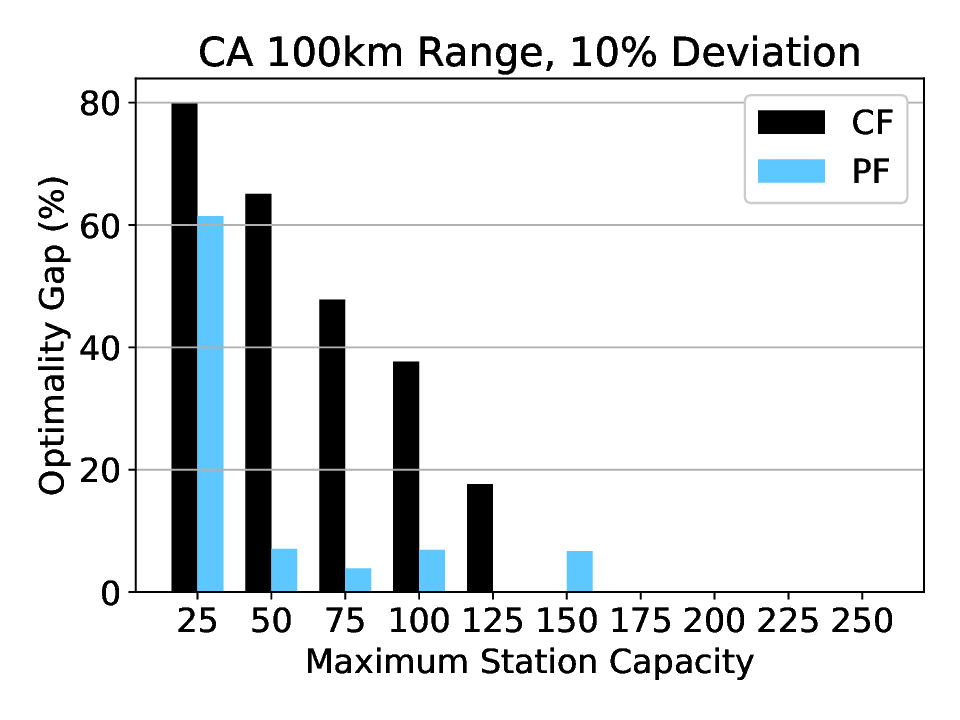} \label{fig:gap-ca_100_10}}
\subfloat[Solution time]{\includegraphics[width=.49\linewidth]{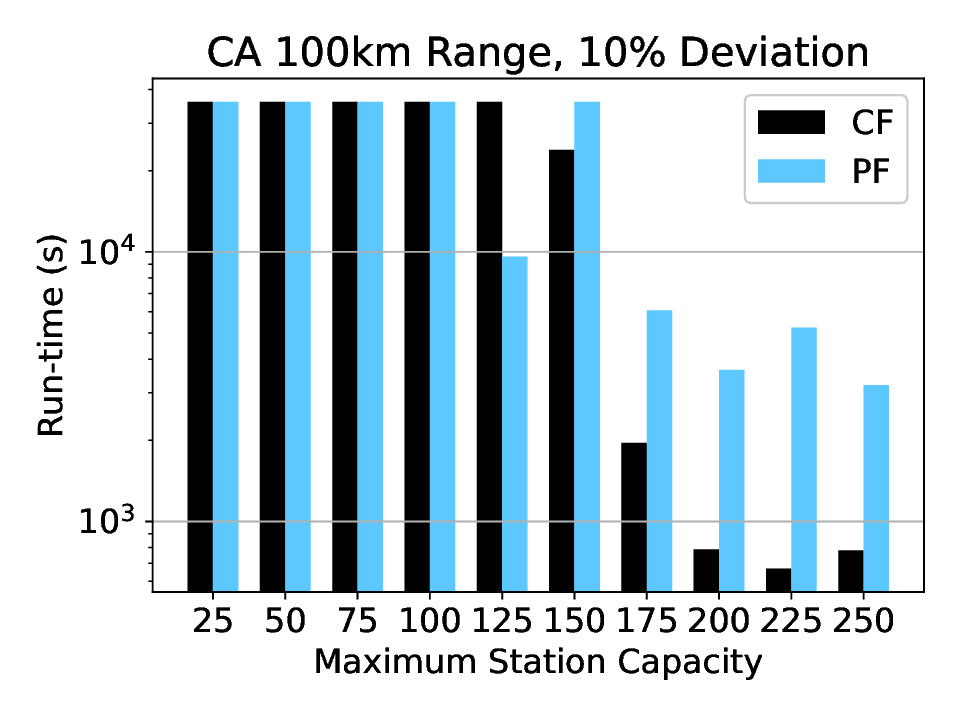}\label{fig:time-ca_100_10}}
\newline
\subfloat[Utilization]{\includegraphics[width=.49\linewidth]{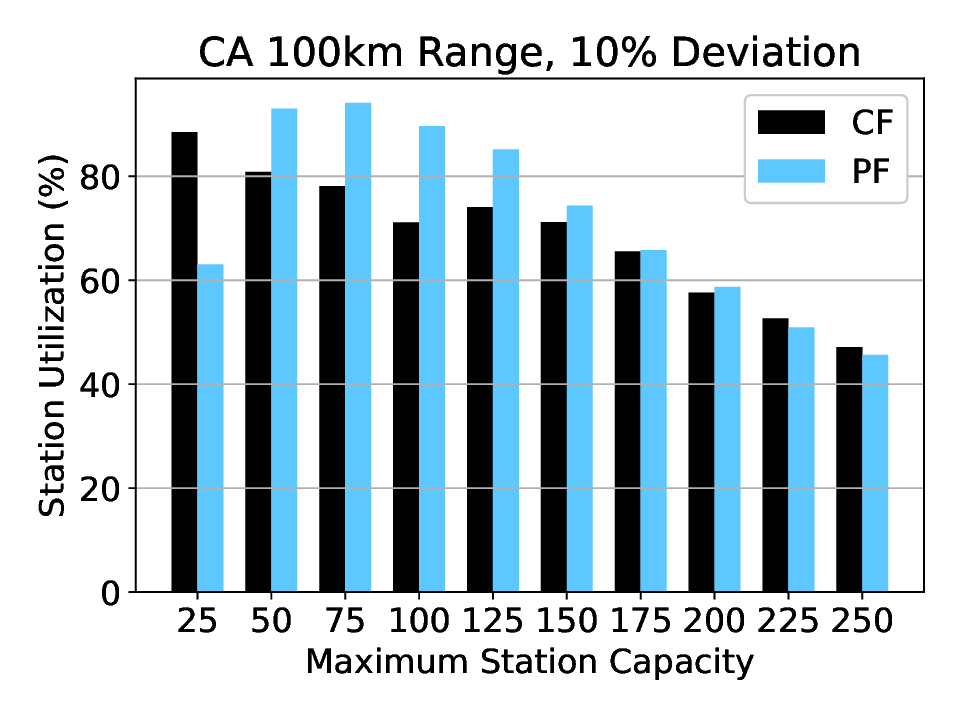} \label{fig:utilization-ca_100_10}}
\subfloat[Objective Value]{\includegraphics[width=.49\linewidth]{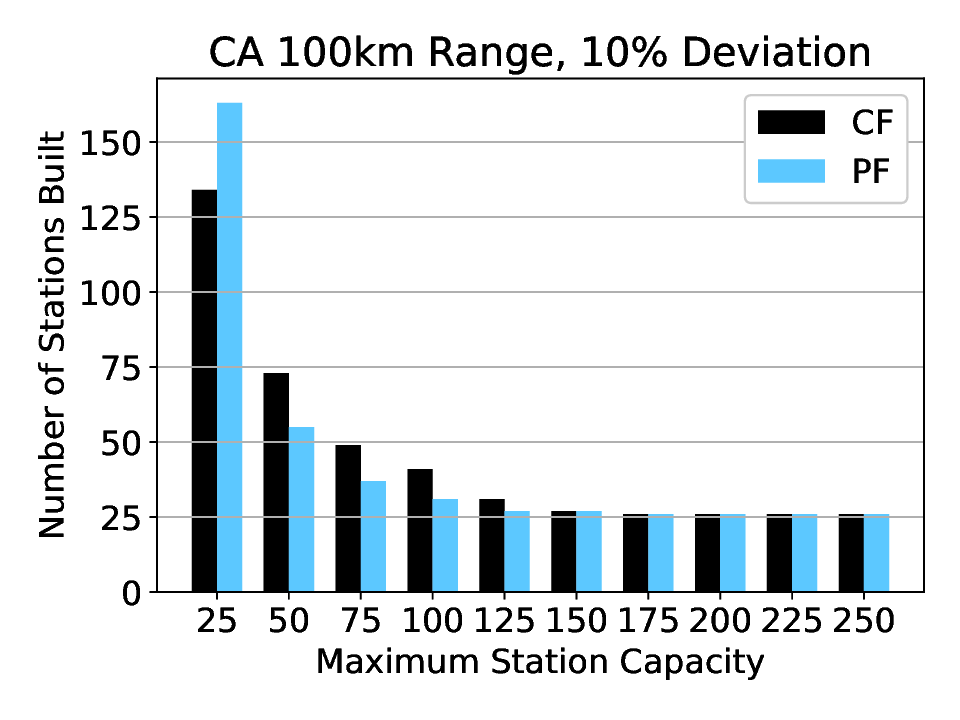}\label{fig:objval-ca_100_10}}
\end{minipage}
}
\caption{Results for the CA road network\label{fig:results_ca_100_10}. The plot shows results with deviation tolerance $\lambda = 10.0\%$ and vehicle range $\rangemax = 100$km for \eqref{eq:cut} and \eqref{eq:path}.}
\end{figure*}

\begin{figure*}
\centering
{
\begin{minipage}{.8\linewidth}\centering
\subfloat[Optimality gap]{\includegraphics[width=.49\linewidth]{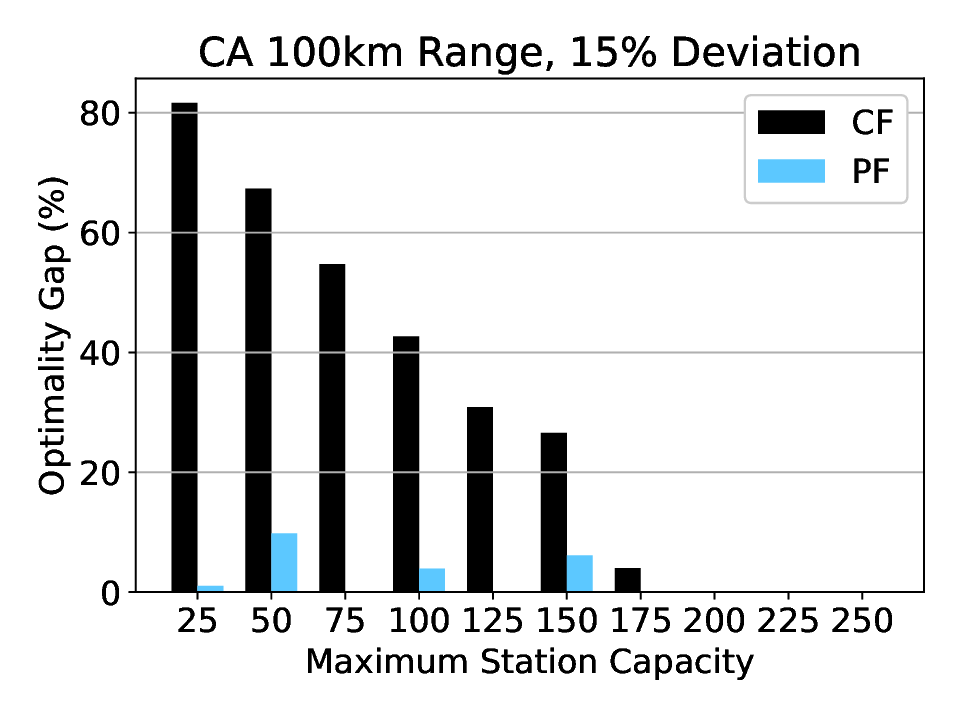} \label{fig:gap-ca_100_15}}
\subfloat[Solution time]{\includegraphics[width=.49\linewidth]{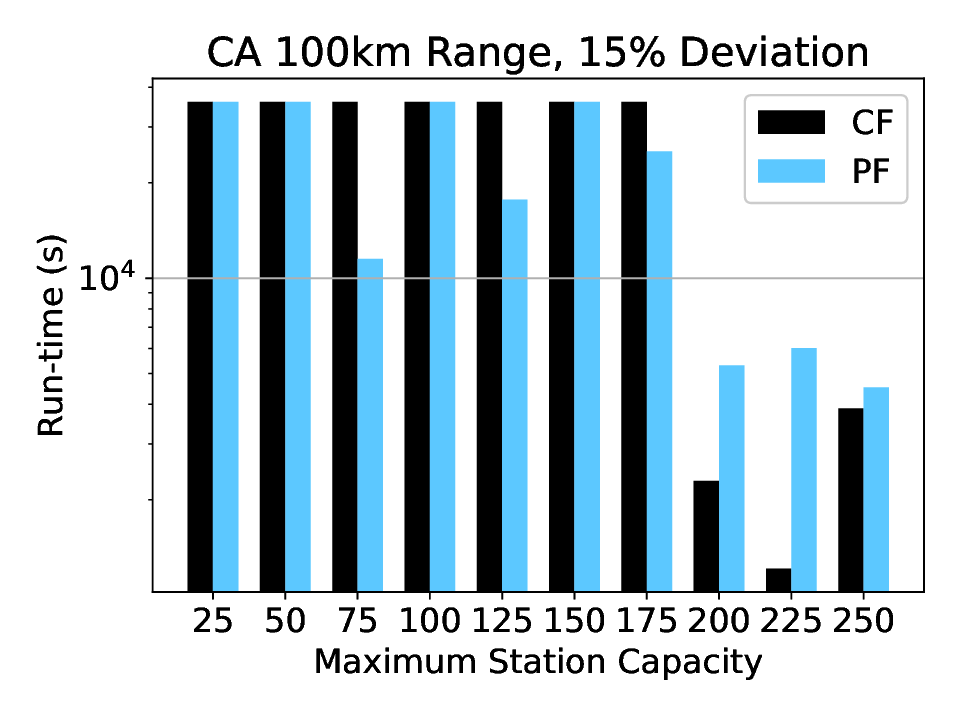}\label{fig:time-ca_100_15}}
\newline
\subfloat[Utilization]{\includegraphics[width=.49\linewidth]{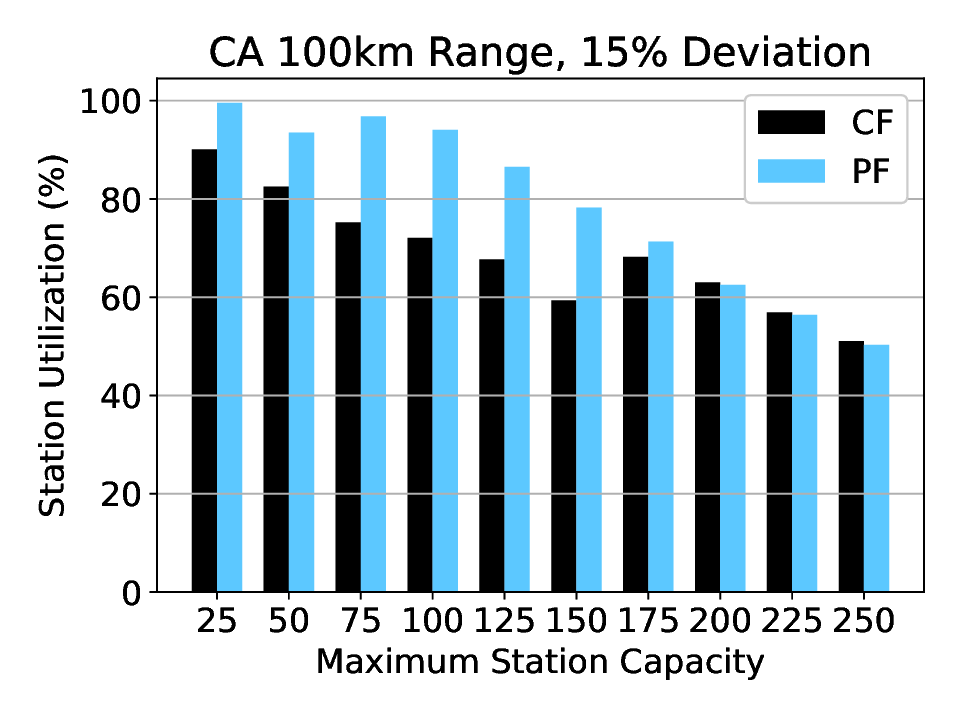} \label{fig:utilization-ca_100_15}}
\subfloat[Objective Value]{\includegraphics[width=.49\linewidth]{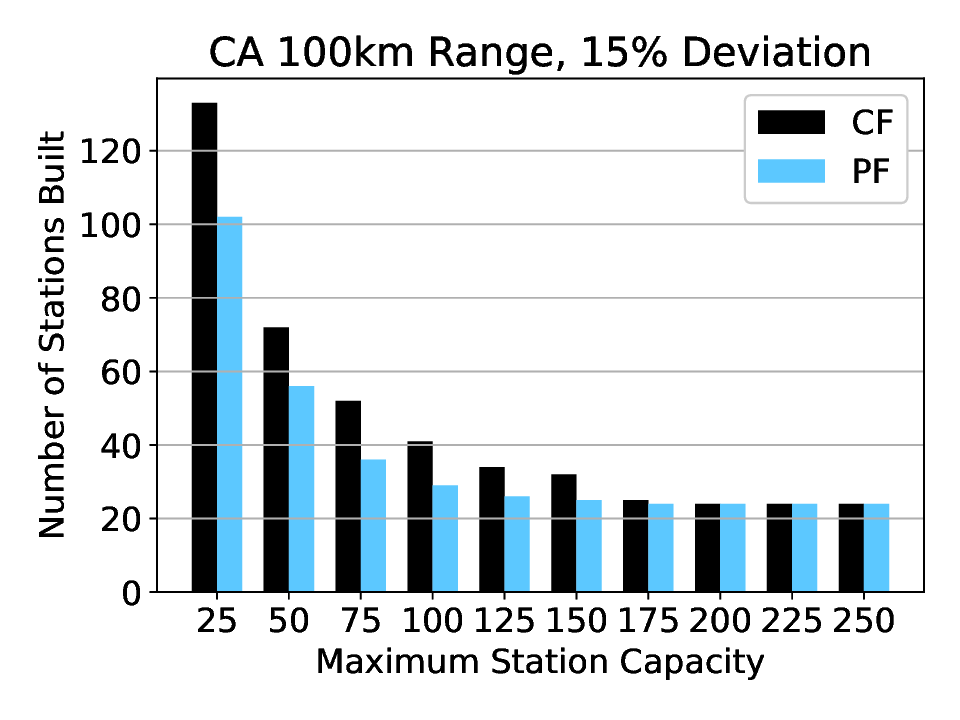}\label{fig:objval-ca_100_15}}
\end{minipage}
}
\caption{Results for the CA road network\label{fig:results_ca_100_15}. The plot shows results with deviation tolerance $\lambda = 15.0\%$ and vehicle range $\rangemax = 100$km for \eqref{eq:cut} and \eqref{eq:path}.}
\end{figure*}

\begin{figure*}
\centering
{
\begin{minipage}{.8\linewidth}\centering
\subfloat[Optimality gap]{\includegraphics[width=.49\linewidth]{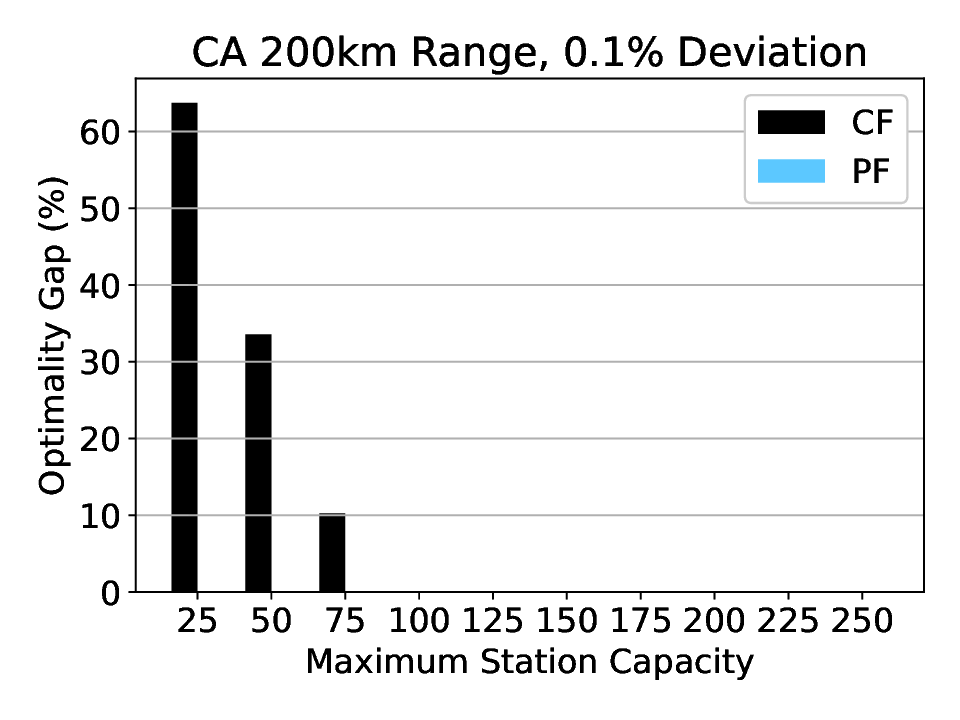} \label{fig:gap-ca_200_0}}
\subfloat[Solution time]{\includegraphics[width=.49\linewidth]{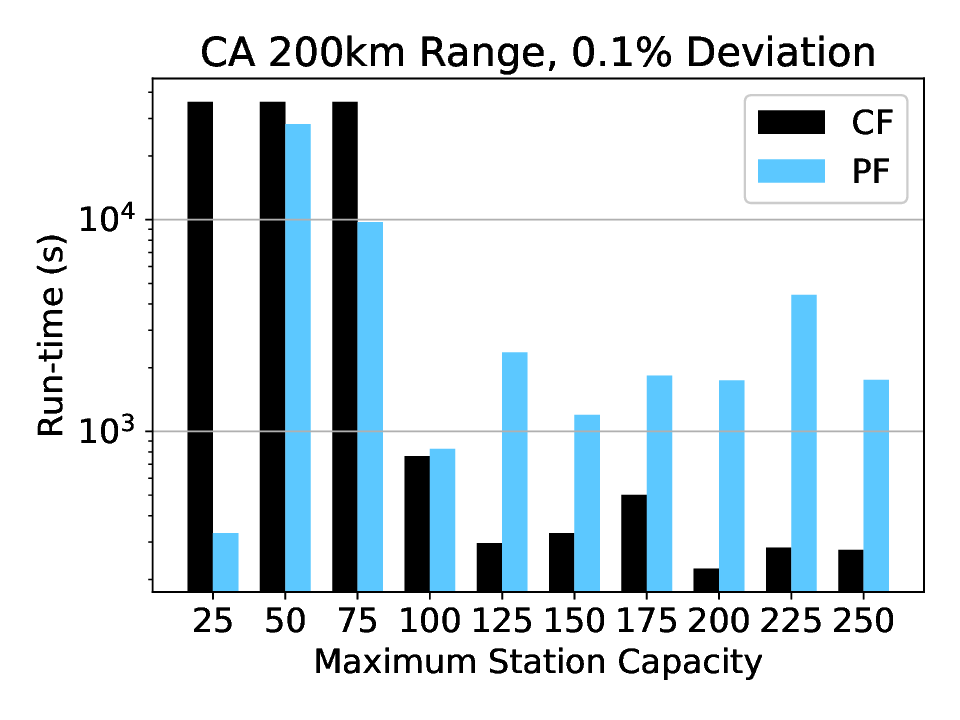}\label{fig:time-ca_200_0}}
\newline
\subfloat[Utilization]{\includegraphics[width=.49\linewidth]{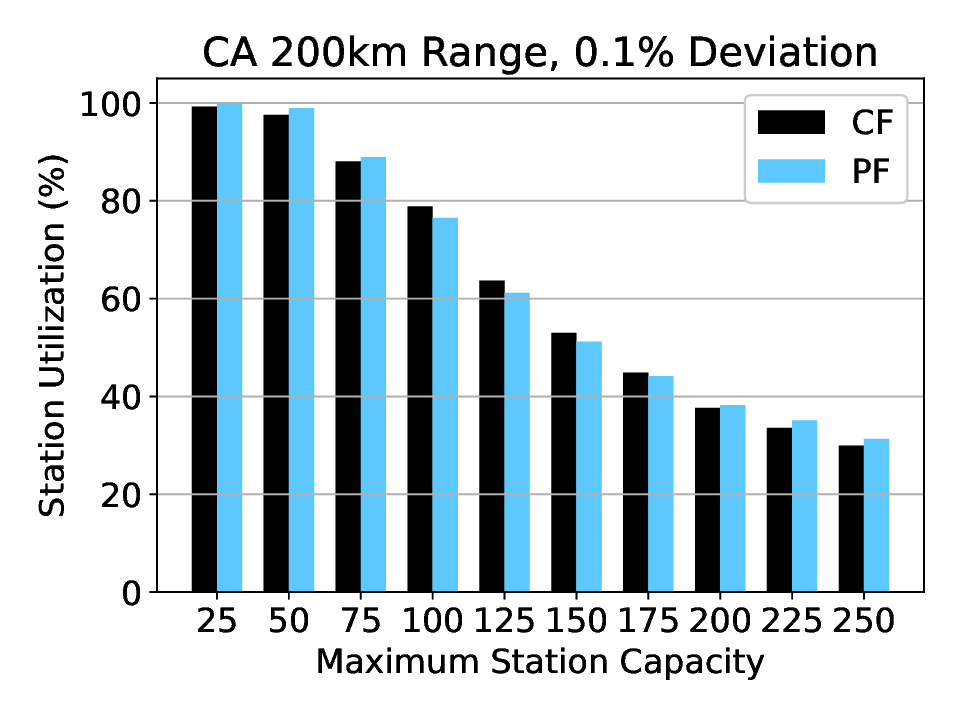} \label{fig:utilization-ca_200_0}}
\subfloat[Objective Value]{\includegraphics[width=.49\linewidth]{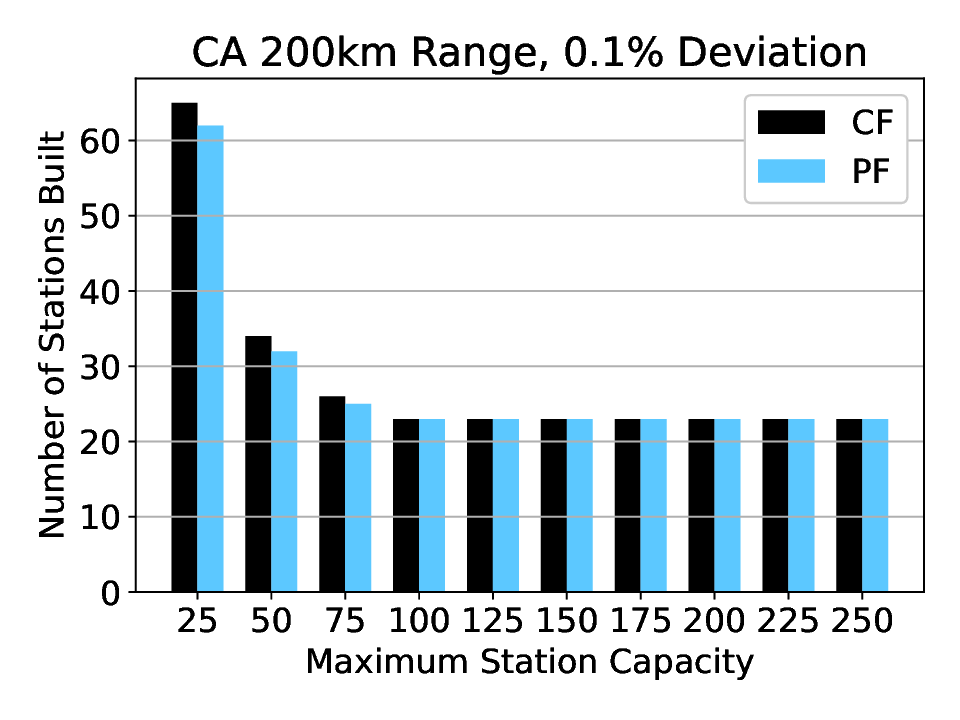}\label{fig:objval-ca_200_0}}
\newline
\end{minipage}
}
\caption{Results for the CA road network\label{fig:results_ca_200_0}. The plot shows results with deviation tolerance $\lambda = 0.1\%$ and vehicle range $\rangemax = 200$km for \eqref{eq:cut} and \eqref{eq:path}.}
\end{figure*}

\begin{figure*}
\centering
{
\begin{minipage}{.8\linewidth}\centering
\subfloat[Optimality gap]{\includegraphics[width=.49\linewidth]{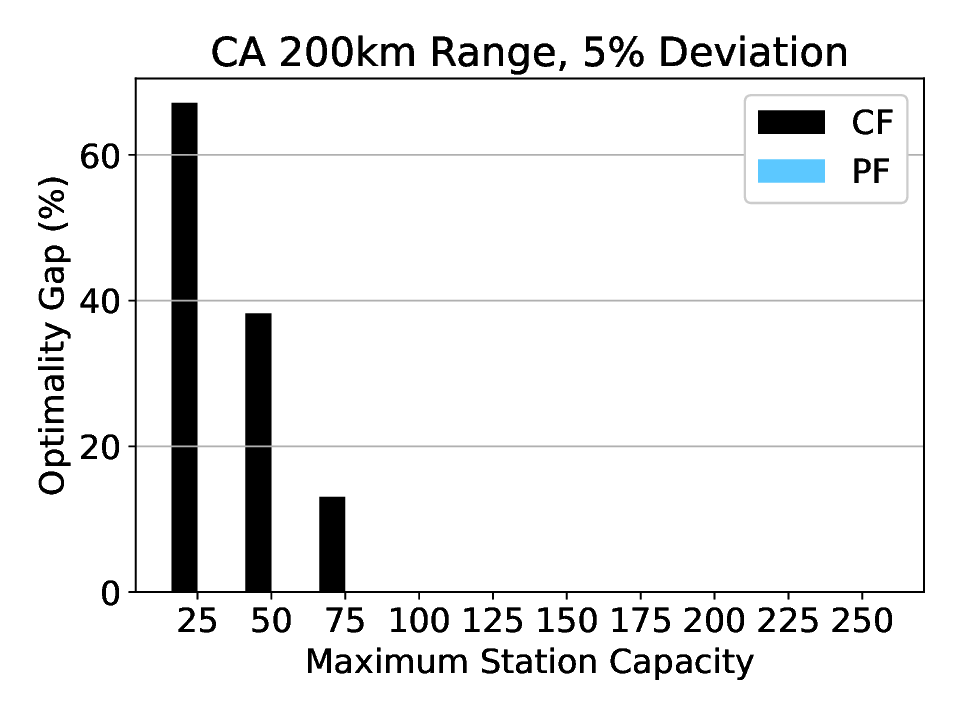} \label{fig:gap-ca_200_5}}
\subfloat[Solution time]{\includegraphics[width=.49\linewidth]{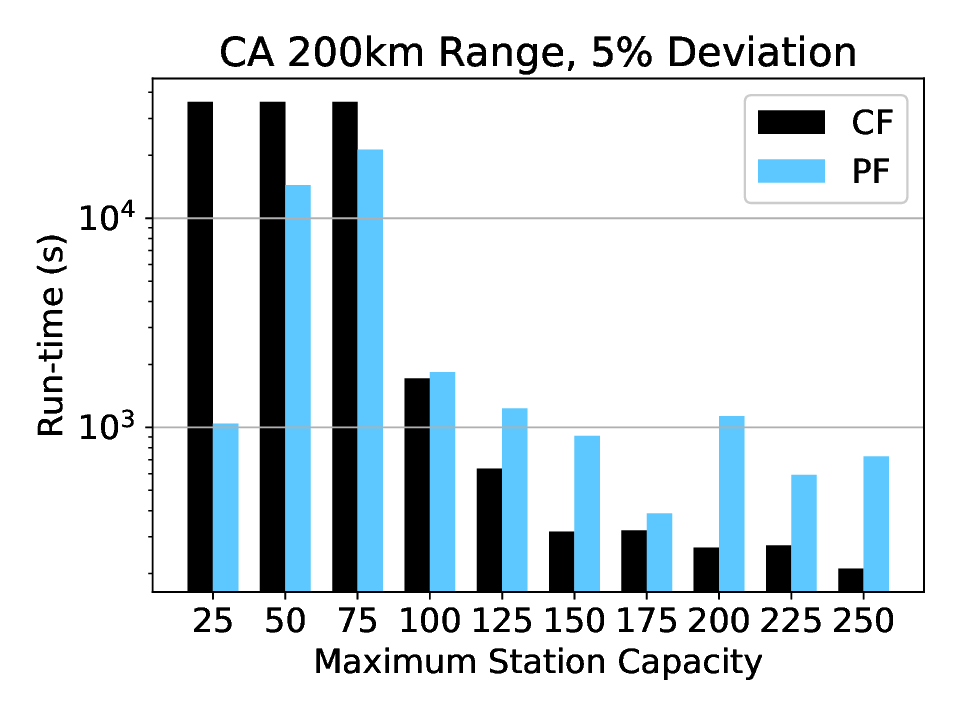}\label{fig:time-ca_200_5}}
\newline
\subfloat[Utilization]{\includegraphics[width=.49\linewidth]{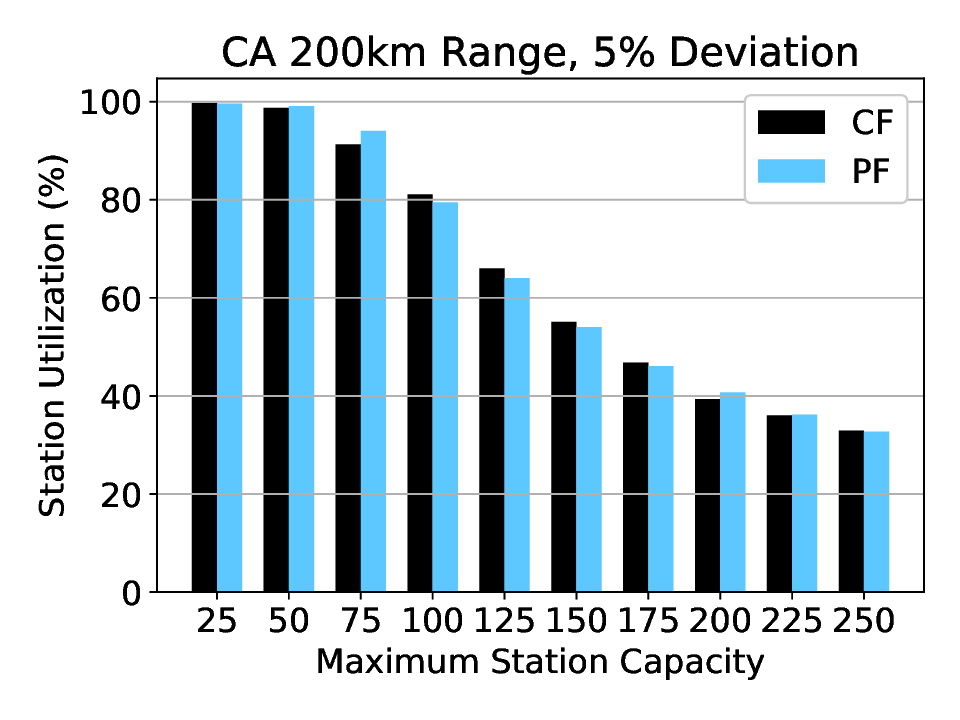} \label{fig:utilization-ca_200_5}}
\subfloat[Objective Value]{\includegraphics[width=.49\linewidth]{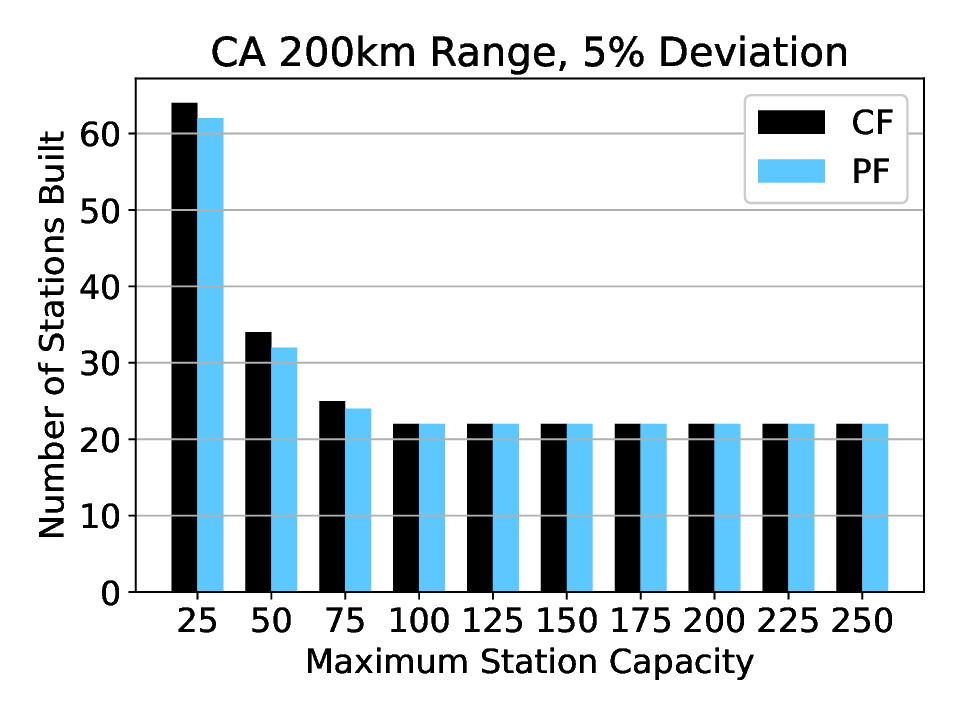}\label{fig:objval-ca_200_5}}
\end{minipage}
}
\caption{Results for the CA road network\label{fig:results_ca_200_5}. The plot shows results with deviation tolerance $\lambda = 5.0\%$ and vehicle range $\rangemax = 200$km for \eqref{eq:cut} and \eqref{eq:path}.}
\end{figure*}

\begin{figure*}
\centering
{
\begin{minipage}{.8\linewidth}\centering
\subfloat[Optimality gap]{\includegraphics[width=.49\linewidth]{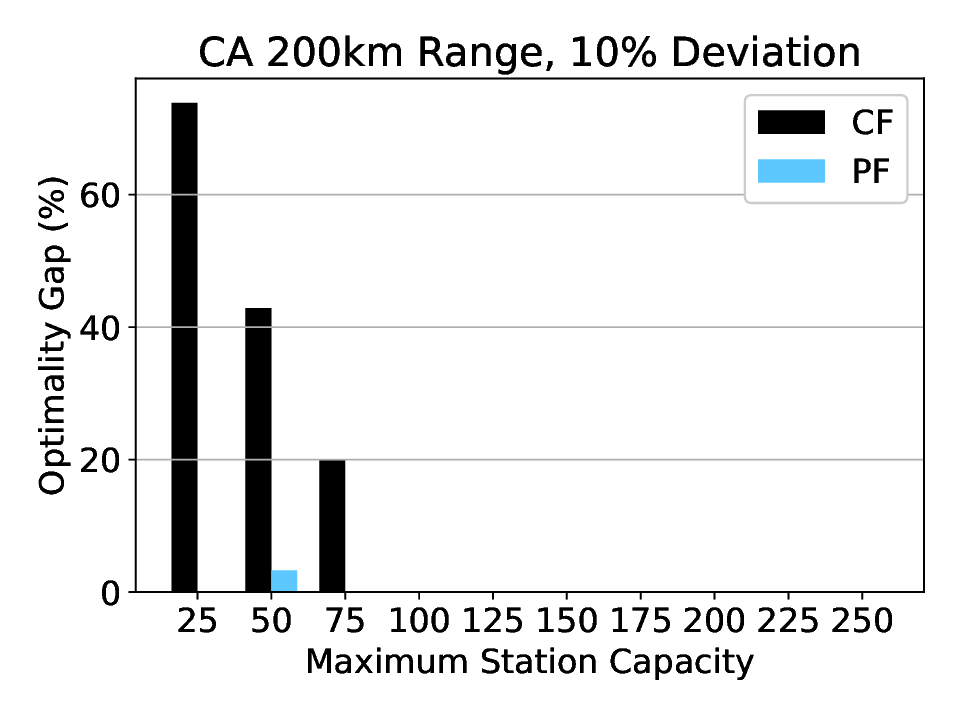} \label{fig:gap-ca_200_10}}
\subfloat[Solution time]{\includegraphics[width=.49\linewidth]{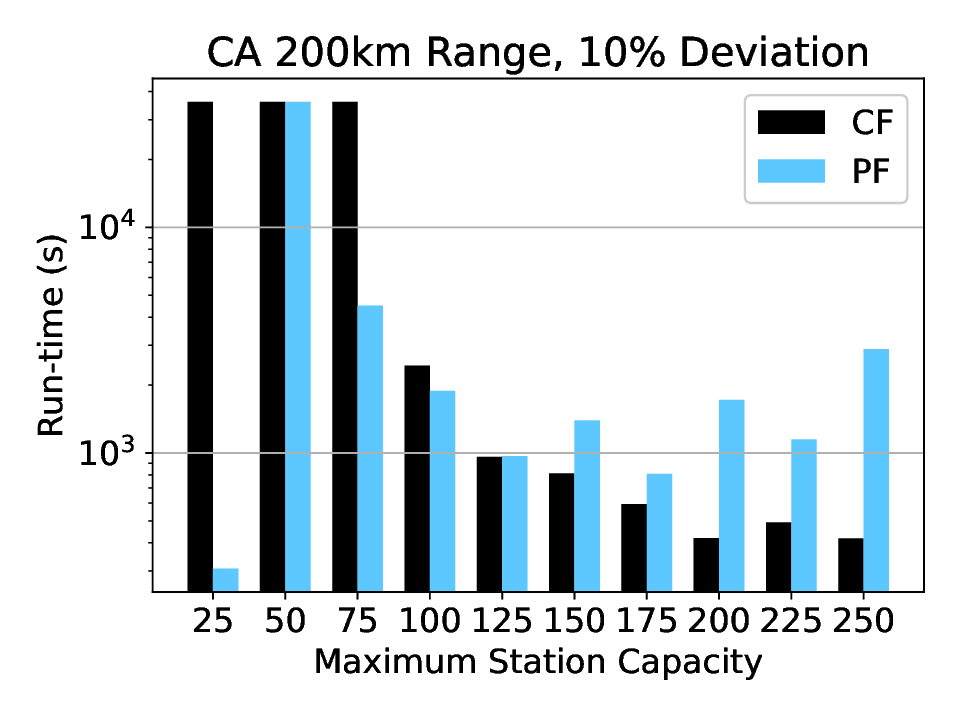}\label{fig:time-ca_200_10}}
\newline
\subfloat[Utilization]{\includegraphics[width=.49\linewidth]{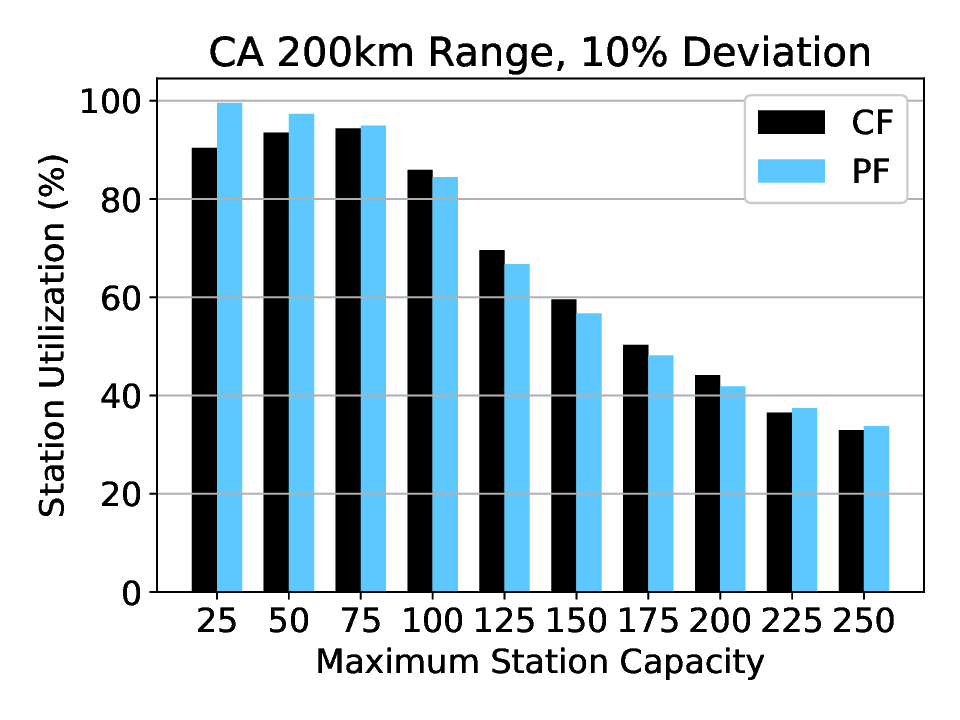} \label{fig:utilization-ca_200_10}}
\subfloat[Objective Value]{\includegraphics[width=.49\linewidth]{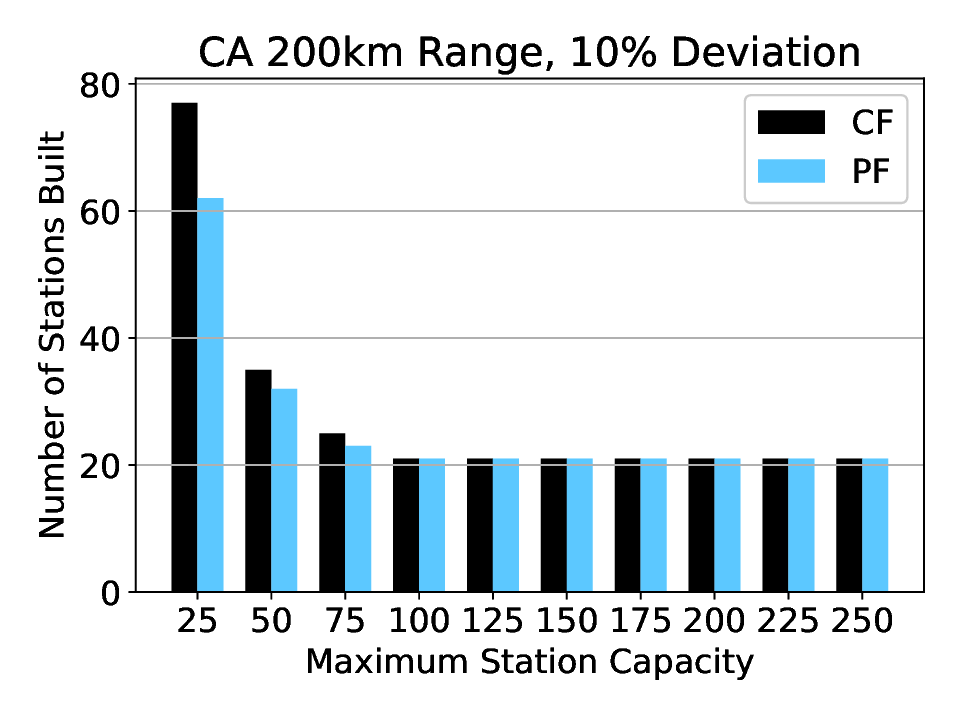}\label{fig:objval-ca_200_10}}
\end{minipage}
}
\caption{Results for the CA road network\label{fig:results_ca_200_10}. The plot shows results with deviation tolerance $\lambda = 10.0\%$ and vehicle range $\rangemax = 200$km for \eqref{eq:cut} and \eqref{eq:path}.}
\end{figure*}

\begin{figure*}
\centering
{
\begin{minipage}{.8\linewidth}\centering
\subfloat[Optimality gap]{\includegraphics[width=.49\linewidth]{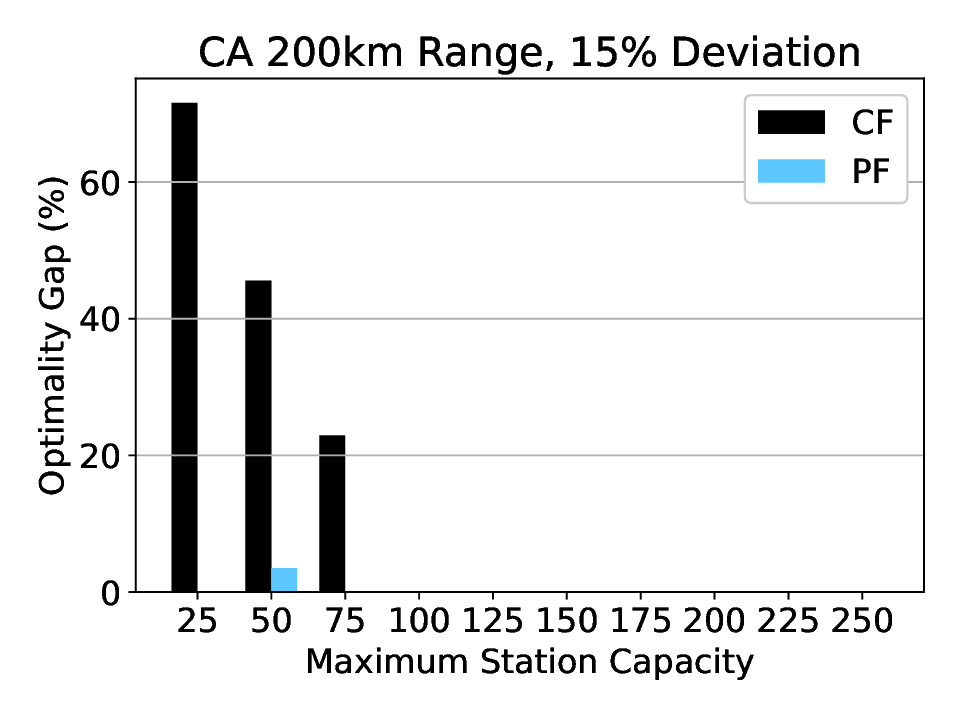} \label{fig:gap-ca_200_15}}
\subfloat[Solution time]{\includegraphics[width=.49\linewidth]{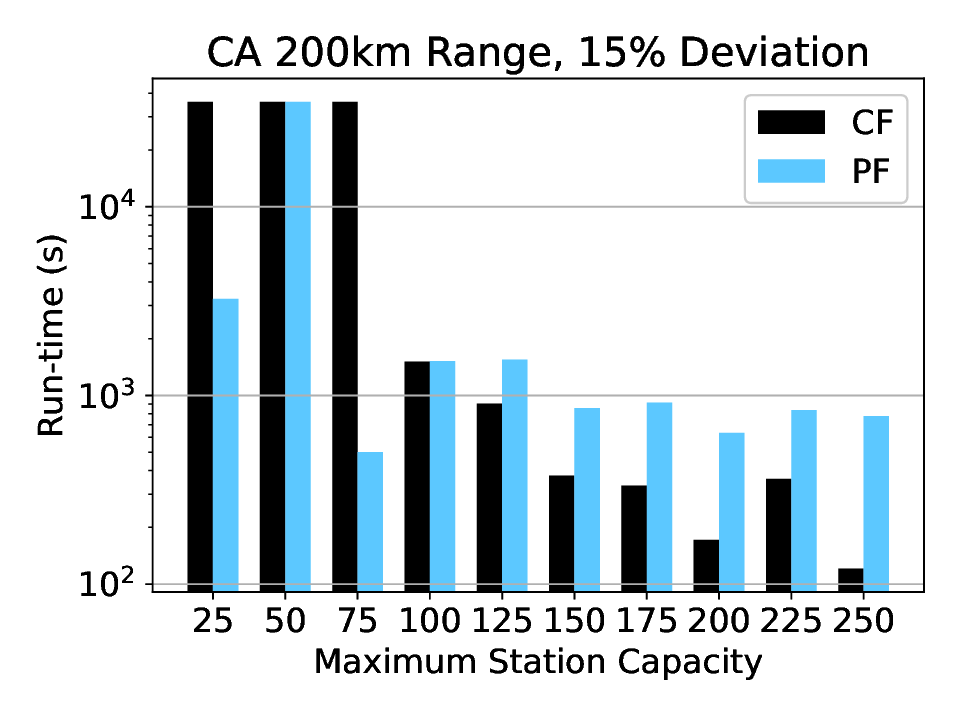}\label{fig:time-ca_200_15}}
\newline
\subfloat[Utilization]{\includegraphics[width=.49\linewidth]{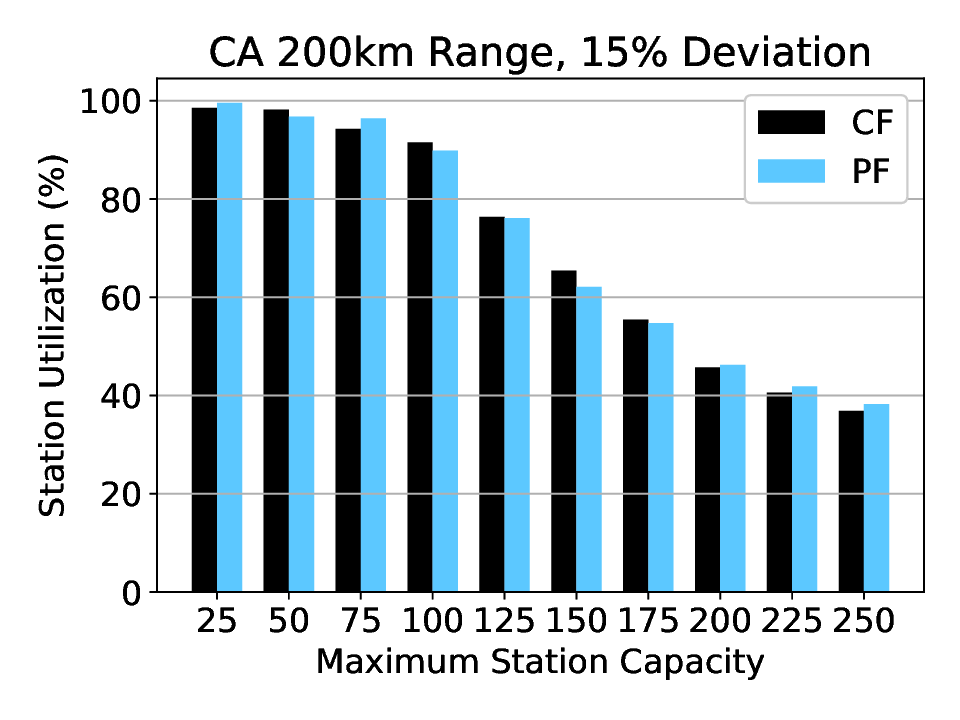} \label{fig:utilization-ca_200_15}}
\subfloat[Objective Value]{\includegraphics[width=.49\linewidth]{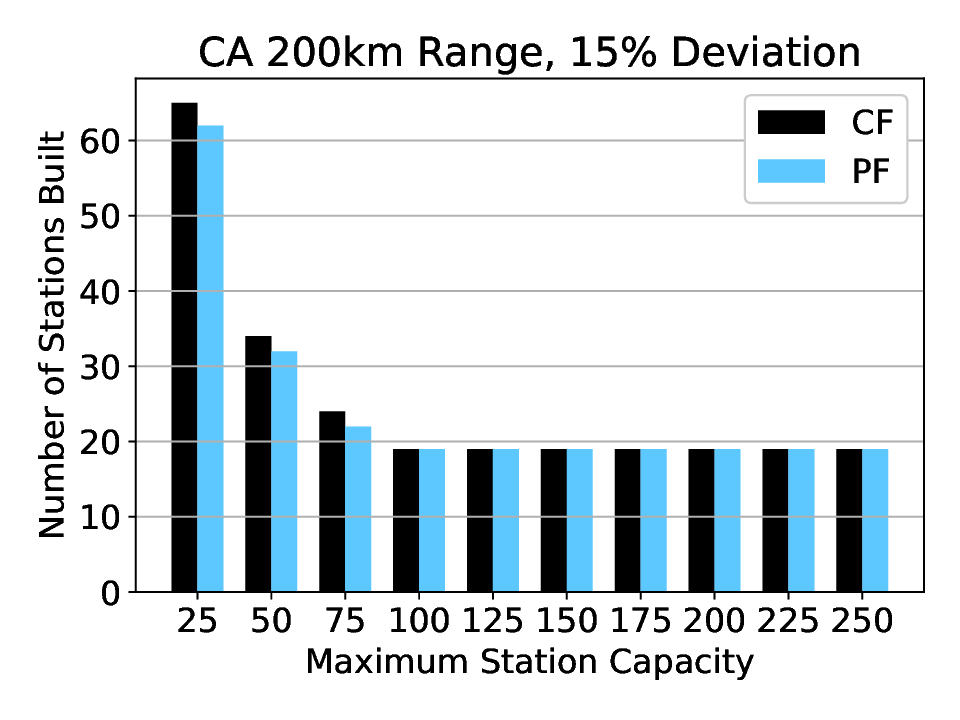}\label{fig:objval-ca_200_15}}
\end{minipage}
}
\caption{Results for the CA road network\label{fig:results_ca_200_15}. The plot shows results with deviation tolerance $\lambda = 15.0\%$ and vehicle range $\rangemax = 200$km for \eqref{eq:cut} and \eqref{eq:path}.}
\end{figure*}

\begin{figure*}[!ht]
\centering
{
\begin{minipage}{.8\linewidth}\centering
\subfloat[Optimality gap]{\includegraphics[width=.49\linewidth]{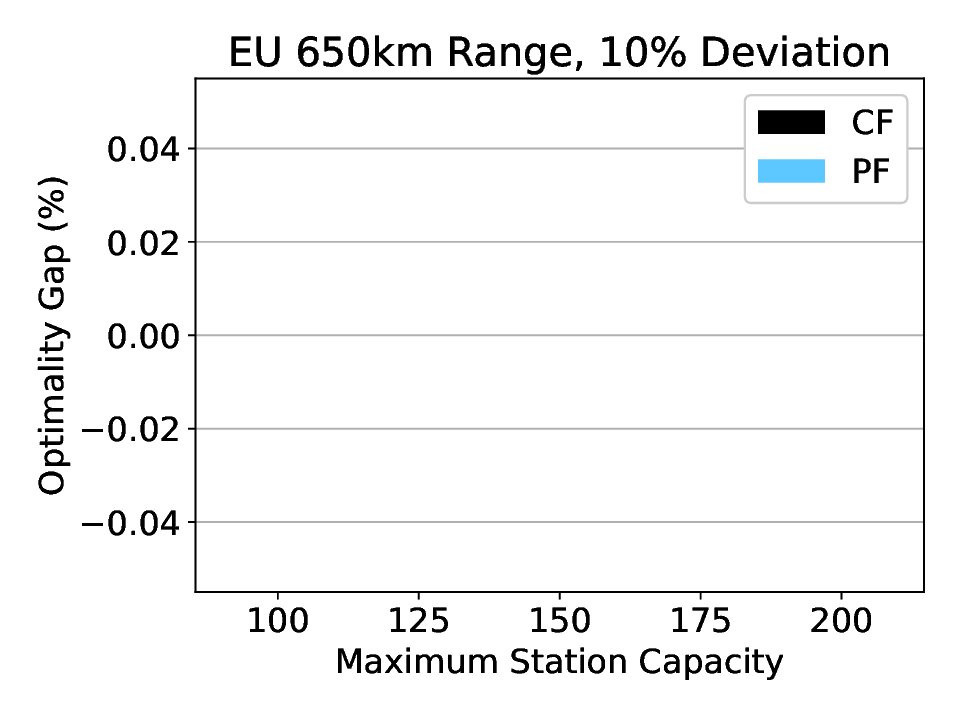} \label{fig:eugap_650_10}}
\subfloat[Solution time]{\includegraphics[width=.49\linewidth]{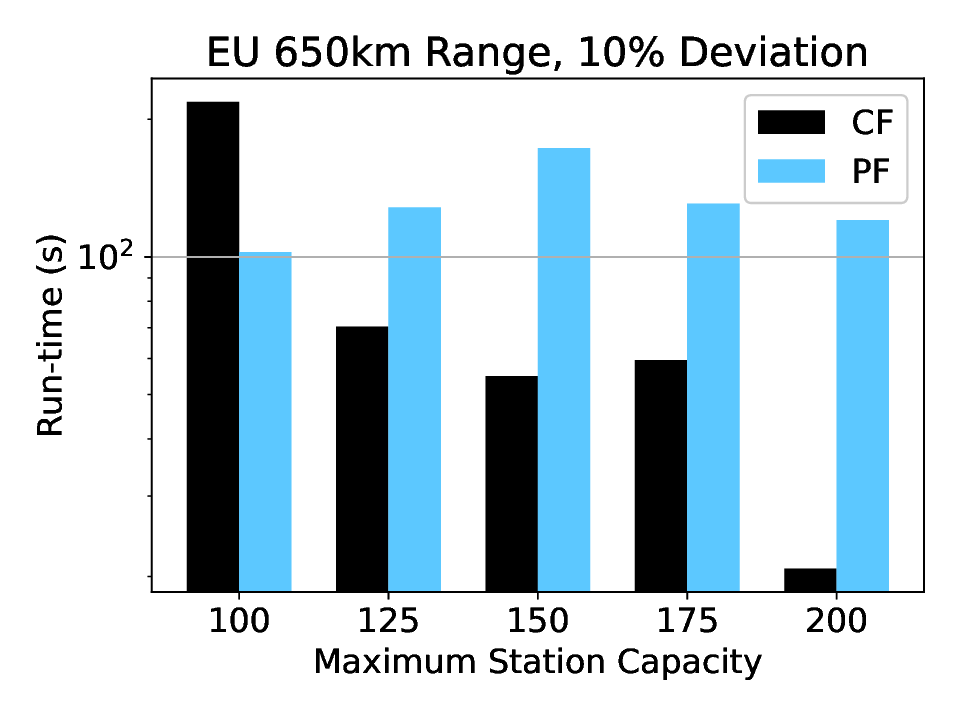}\label{fig:eutime_650_10}}
\newline
\subfloat[Utilization]{\includegraphics[width=.49\linewidth]{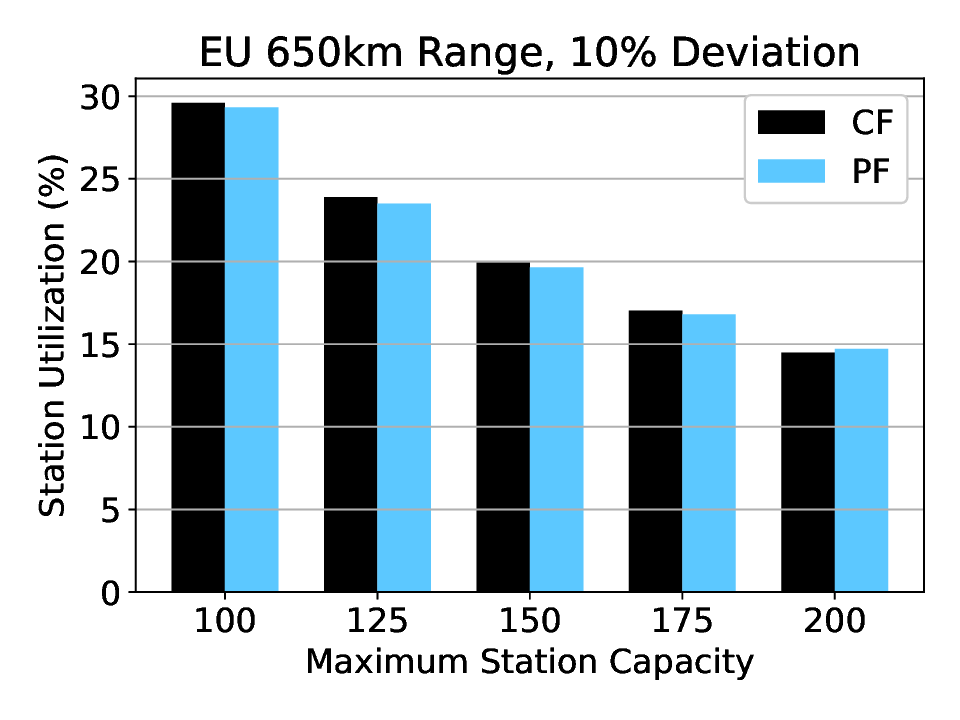} \label{fig:euutilization_650_10}}
\subfloat[Objective Value]{\includegraphics[width=.49\linewidth]{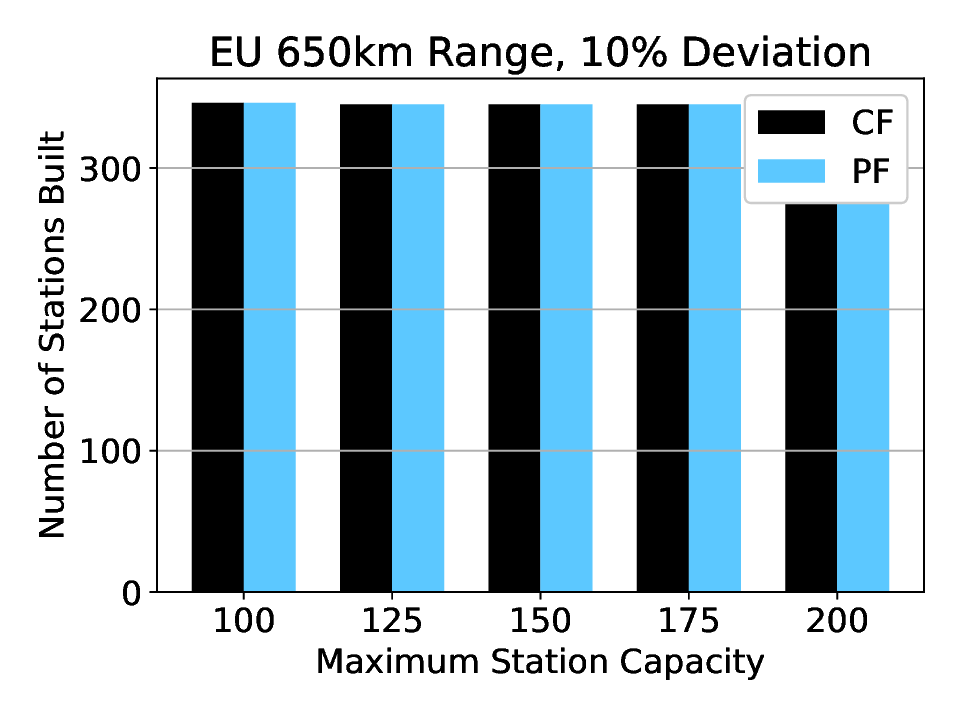}\label{fig:euobjval_650_10}}
\end{minipage}
}
\caption{Results for the EU road network\label{fig:euresults_650_10}. The plot shows results with deviation tolerance $\lambda = 10.0\%$ and vehicle range $\rangemax = 650$km for \eqref{eq:cut} and \eqref{eq:path}.}
\end{figure*}

\begin{figure*}[!ht]
\centering
{
\begin{minipage}{.8\linewidth}\centering
\subfloat[Optimality gap]{\includegraphics[width=.49\linewidth]{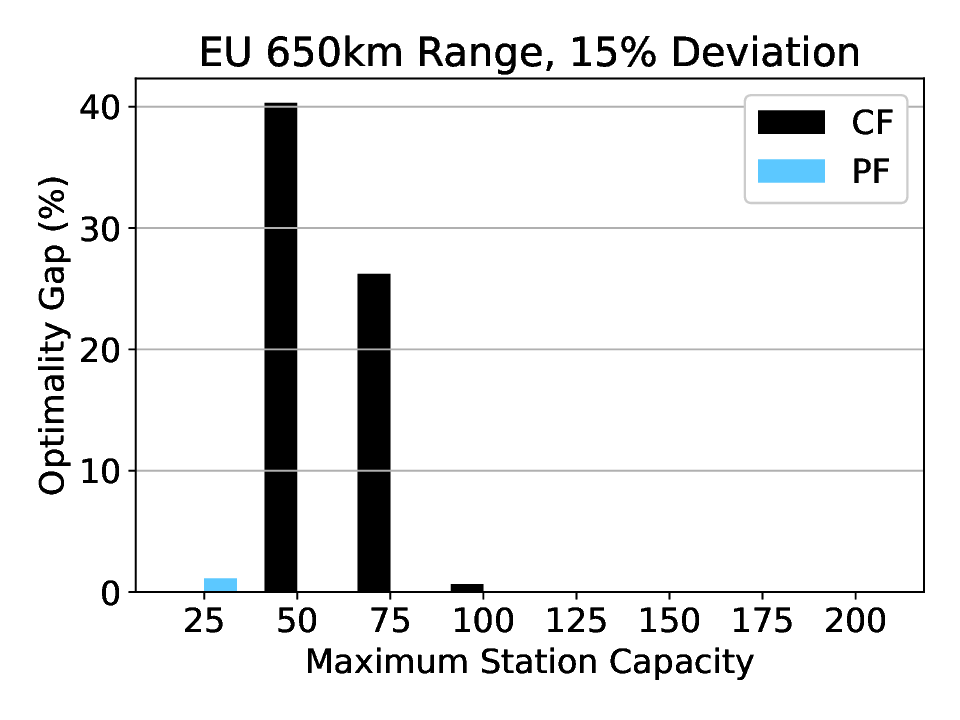} \label{fig:eugap_650_15}}
\subfloat[Solution time]{\includegraphics[width=.49\linewidth]{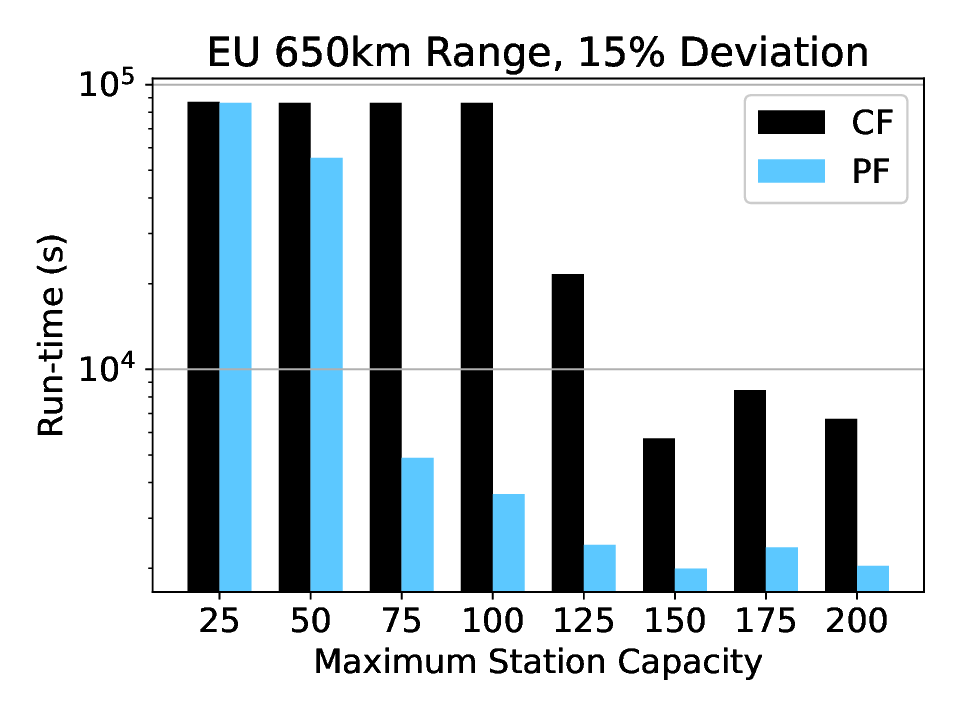}\label{fig:eutime_650_15}}
\newline
\subfloat[Utilization]{\includegraphics[width=.49\linewidth]{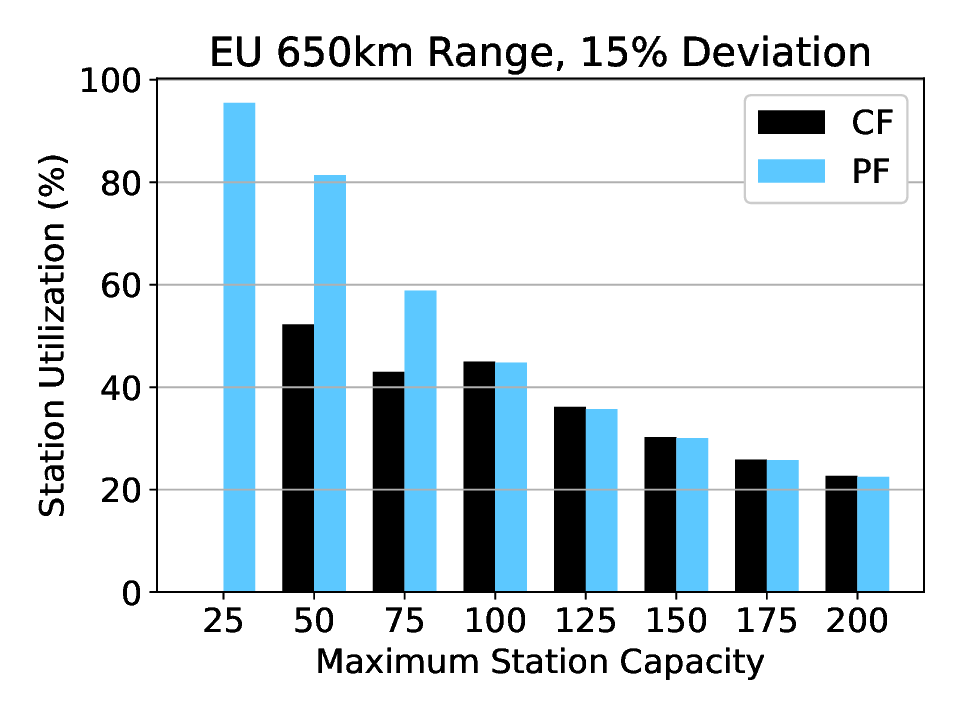} \label{fig:euutilization_650_15}}
\subfloat[Objective Value]{\includegraphics[width=.49\linewidth]{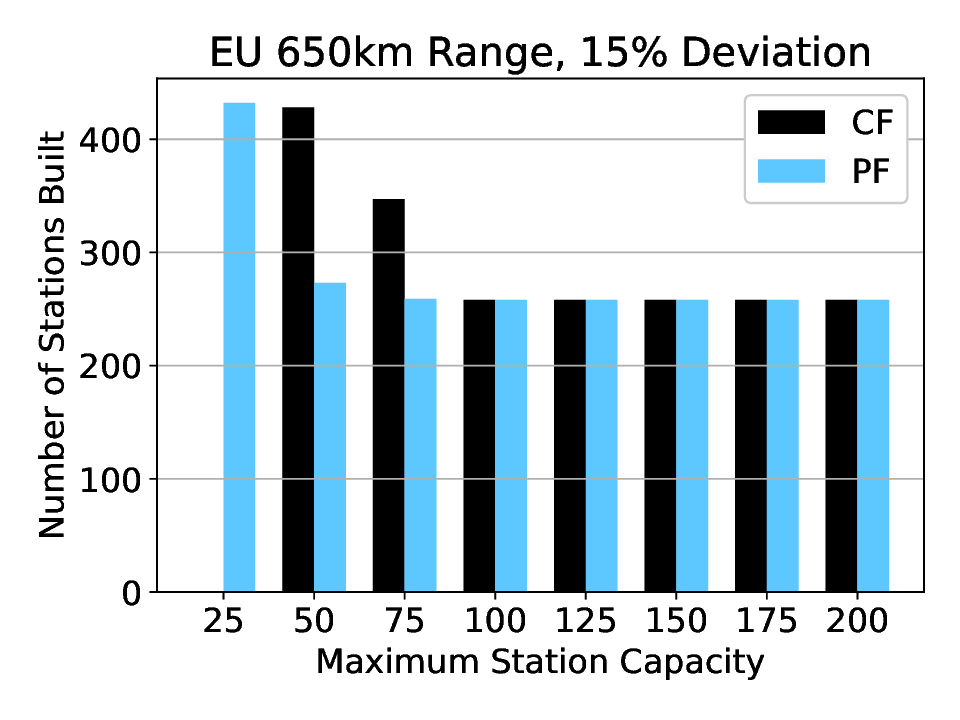}\label{fig:euobjval_650_15}}
\end{minipage}
}
\caption{Results for the EU road network\label{fig:euresults_650_15}. The plot shows results with deviation tolerance $\lambda = 15.0\%$ and vehicle range $\rangemax = 650$km for \eqref{eq:cut} and \eqref{eq:path}.}
\end{figure*}

\begin{figure*}[!ht]
\centering
{
\begin{minipage}{.8\linewidth}\centering
\subfloat[Optimality gap]{\includegraphics[width=.49\linewidth]{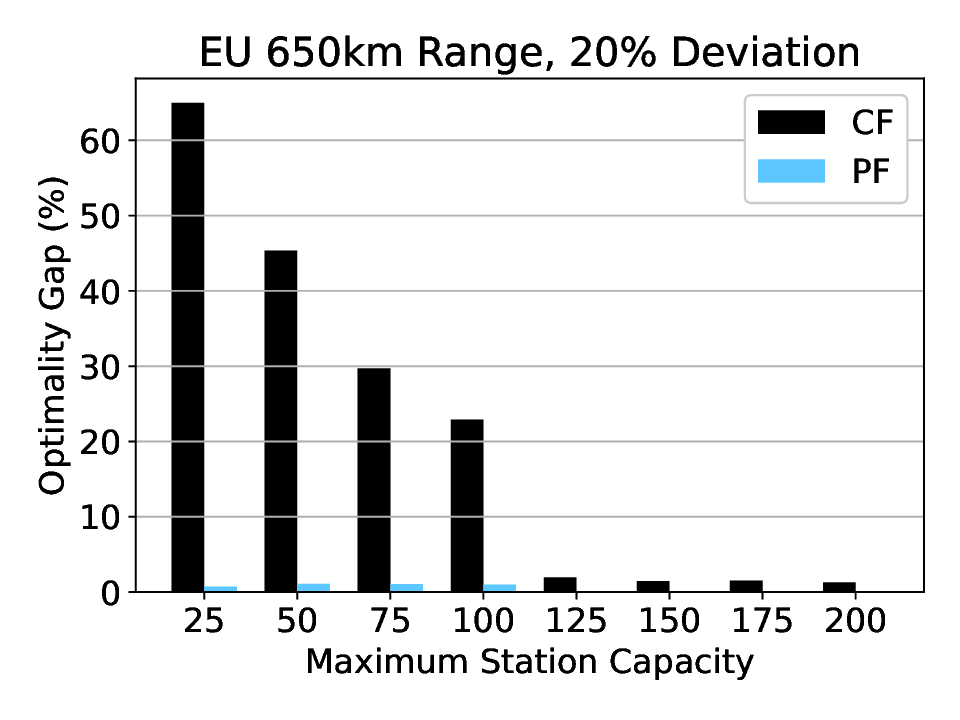} \label{fig:eugap_650_20}}
\subfloat[Solution time]{\includegraphics[width=.49\linewidth]{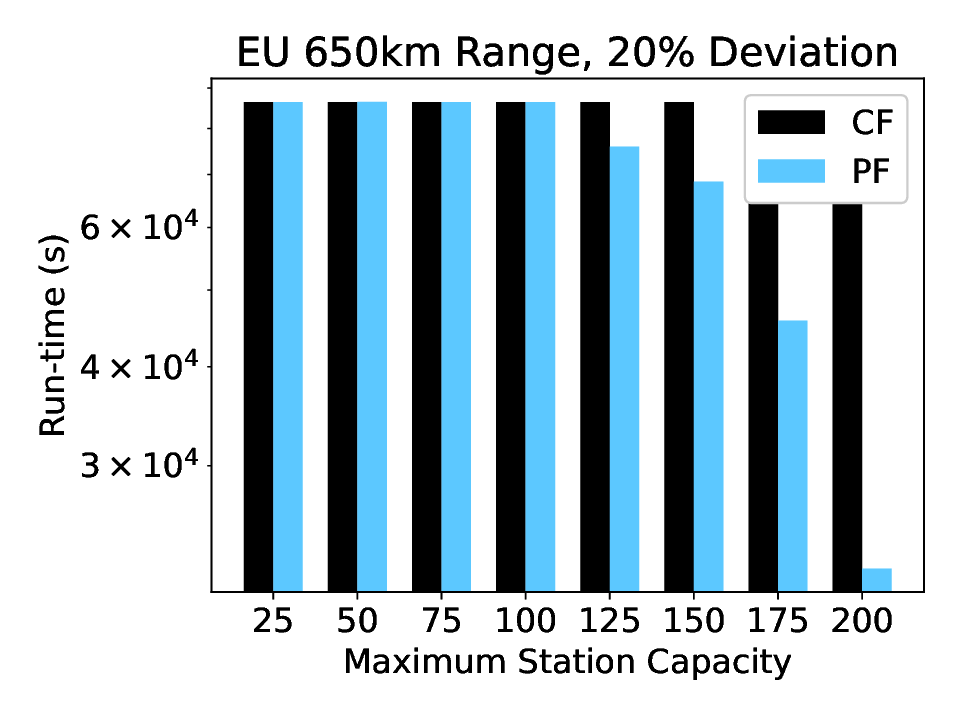}\label{fig:eutime_650_20}}
\newline
\subfloat[Utilization]{\includegraphics[width=.49\linewidth]{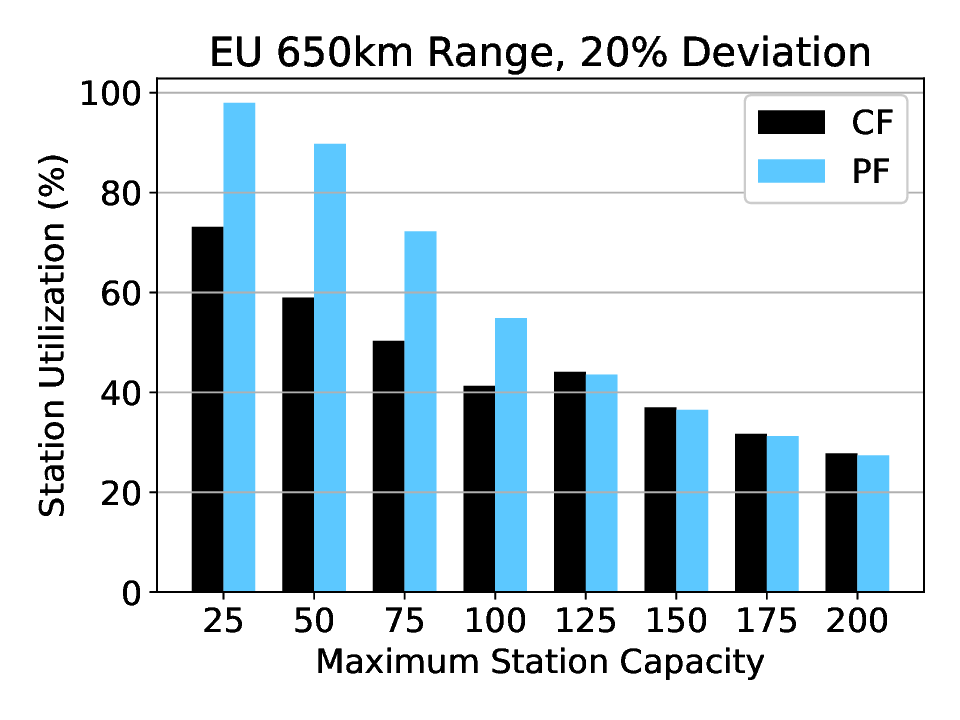} \label{fig:euutilization_650_20}}
\subfloat[Objective Value]{\includegraphics[width=.49\linewidth]{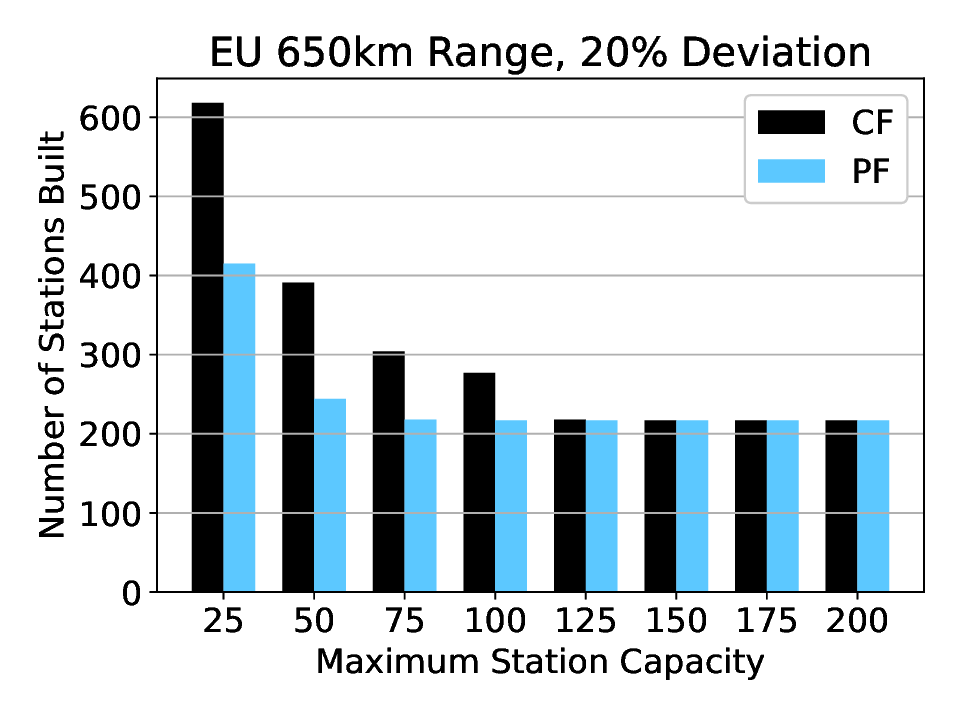}\label{fig:euobjval_650_20}}
\end{minipage}
}
\caption{Results for the EU road network\label{fig:euresults_650_20}. The plot shows results with deviation tolerance $\lambda = 20.0\%$ and vehicle range $\rangemax = 650$km for \eqref{eq:cut} and \eqref{eq:path}.}
\end{figure*}

\begin{figure*}[!ht]
\centering
{
\begin{minipage}{.8\linewidth}\centering
\subfloat[Optimality gap]{\includegraphics[width=.49\linewidth]{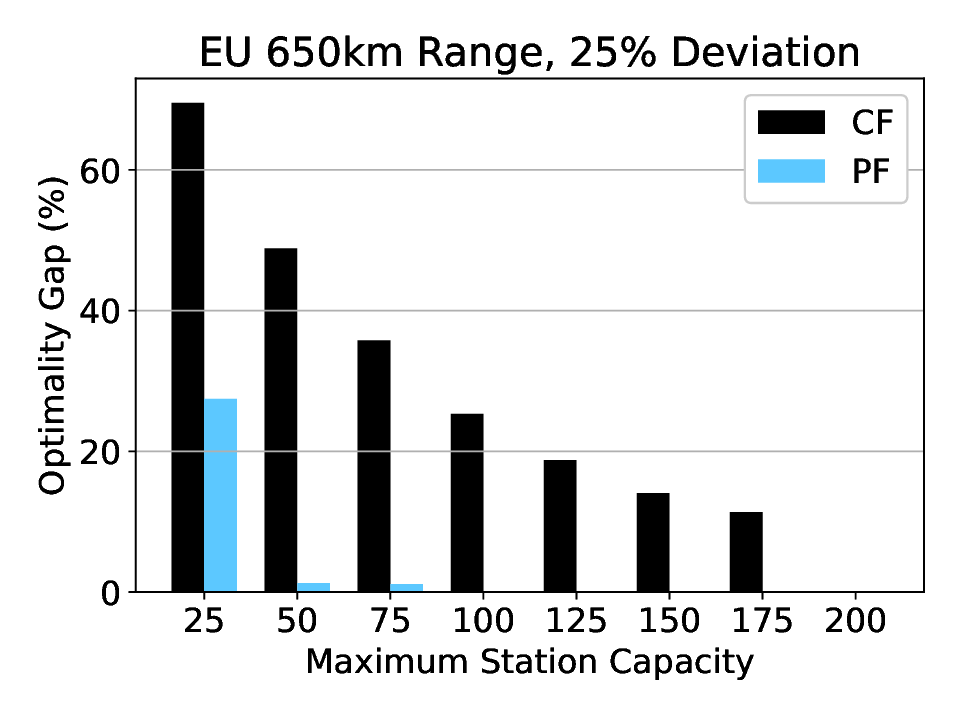} \label{fig:eugap_650_25}}
\subfloat[Solution time]{\includegraphics[width=.49\linewidth]{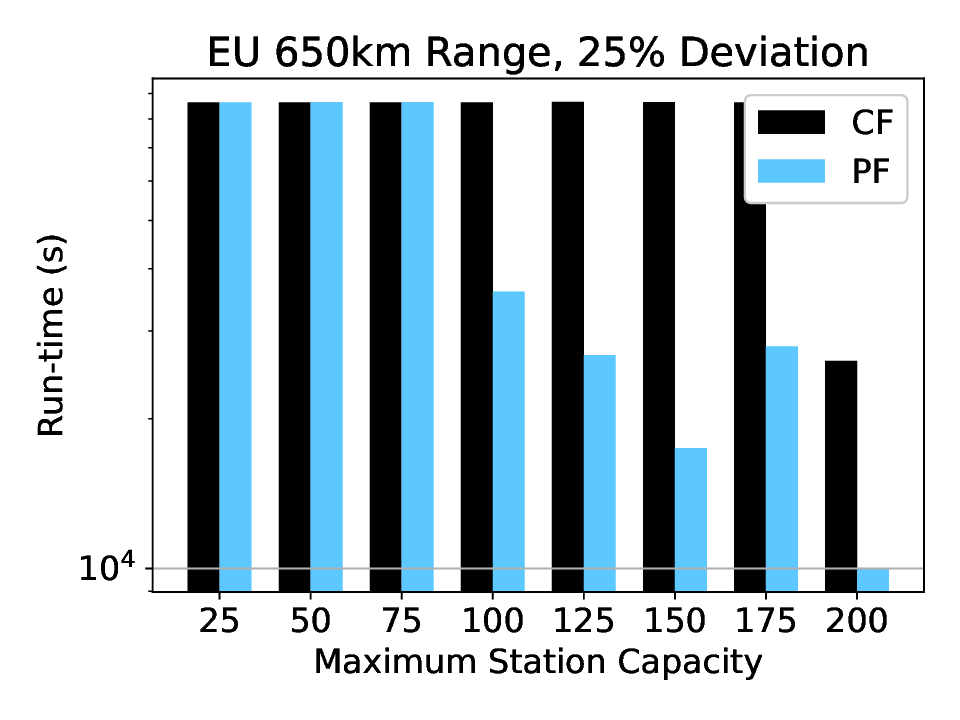}\label{fig:eutime_650_25}}
\newline
\subfloat[Utilization]{\includegraphics[width=.49\linewidth]{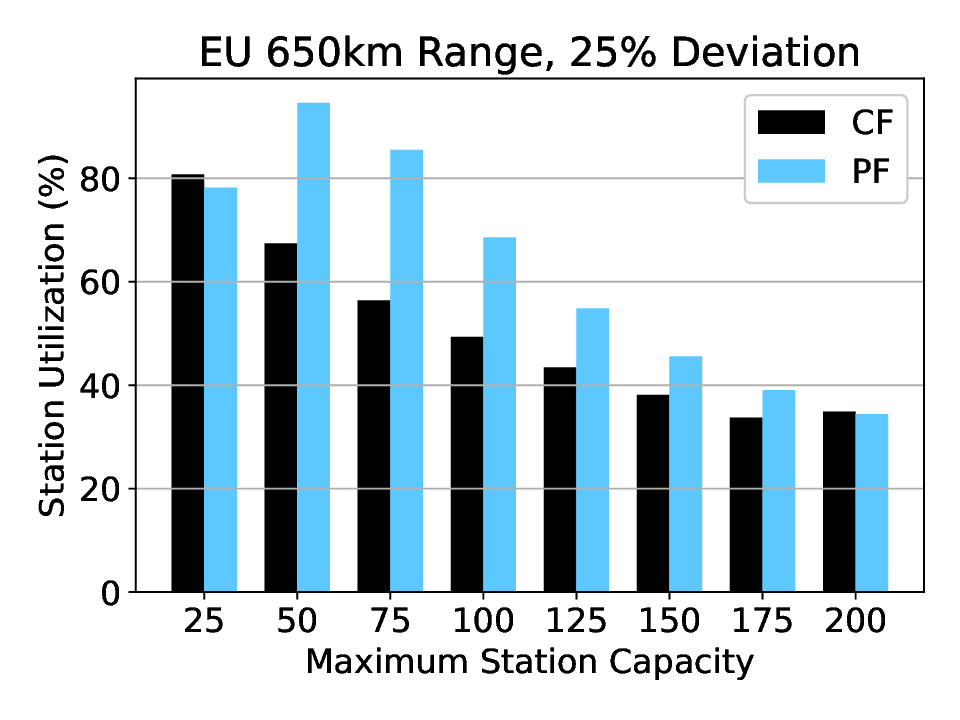} \label{fig:euutilization_650_25}}
\subfloat[Objective Value]{\includegraphics[width=.49\linewidth]{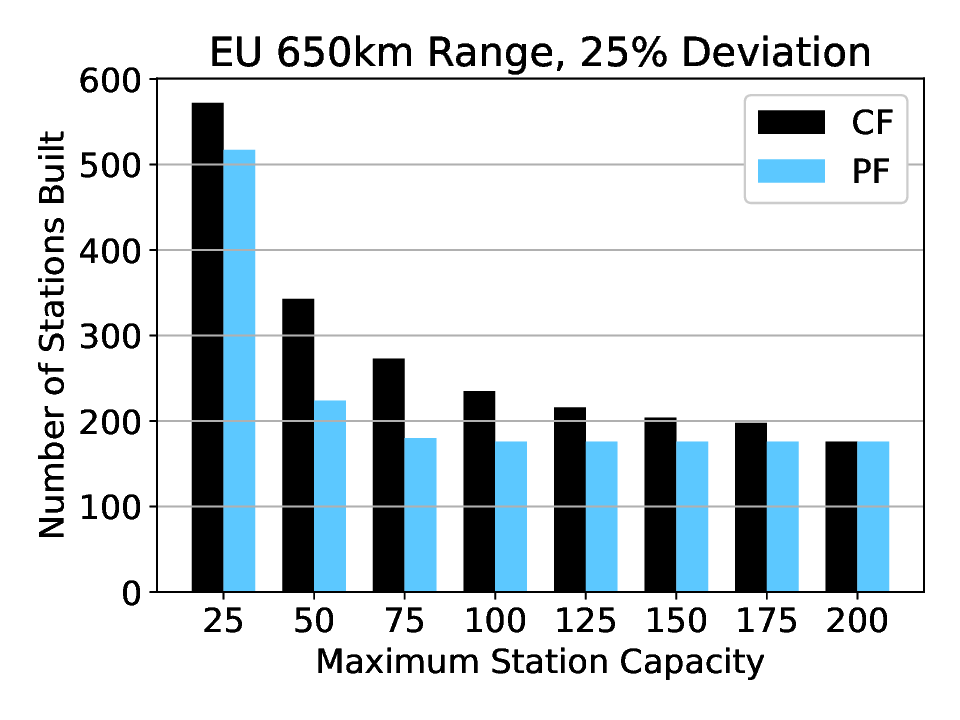}\label{fig:euobjval_650_25}}
\end{minipage}
}
\caption{Results for the EU road network\label{fig:euresults_650_25}. The plot shows results with deviation tolerance $\lambda = 25.0\%$ and vehicle range $\rangemax = 650$km for \eqref{eq:cut} and \eqref{eq:path}.}
\end{figure*}

\end{document}